\numberwithin{equation}{section}
\theoremstyle{plain}   %% This is the default, anyway
\newtheorem{bigthm}{Theorem}   % Numbered separately, as A, B, etc.
\newtheorem{bigconj}{Conjecture}
\newtheorem{theorem}[equation]{Theorem}  % Numbered with the equation counter
\newtheorem{cor}[equation]{Corollary}     
\newtheorem{lemma}[equation]{Lemma}         
\newtheorem{prop}[equation]{Proposition} 
\newtheorem{conjecture}[equation]{Conjecture}
\theoremstyle{definition}
\newtheorem{definition}[equation]{Definition}
\theoremstyle{remark}
\newtheorem{remark}[equation]{Remark}
\newtheorem{example}[equation]{Example}
\newcommand{\Fil}{\operatorname{Fil}}
\newcommand{\Tor}{\operatorname{Tor}}
\newcommand{\gr}{\operatorname{gr}}
\newcommand{\TC}{\operatorname{TC}}
\newcommand{\TR}{\operatorname{TR}}
\newcommand{\HH}{\operatorname{HH}}
\newcommand{\N}{\mathbb{N}}
\newcommand{\Z}{\mathbb{Z}}
\newcommand{\R}{\mathbb{R}}
\newcommand{\C}{\mathbb{C}}
\newcommand{\Fp}{\mathbb{F}_p}
\newcommand{\op}{{\operatorname{op}}}
\newcommand{\holim}{\operatornamewithlimits{holim}}
\newcommand{\cy}{\operatorname{cy}}
\newcommand{\id}{\operatorname{id}}
\newcommand{\pr}{\operatorname{pr}}
\newcommand{\tr}{\operatorname{tr}}
\newcommand{\card}{\operatorname{card}}
\newcommand{\sd}{\operatorname{sd}}
\newcommand{\sgn}{\operatorname{sgn}}
\begin{document}

\title{On the $K$-theory of planar cuspical curves \\
and a new family of polytopes}

\author{Lars Hesselholt}

\address{Nagoya University,  Japan, and University of Copenhagen, Denmark}

\email{larsh@math.nagoya-u.ac.jp}

\thanks{Partial support from DNRF Niels Bohr Professorship and JSPS Grant-in-Aid 23340016 is gratefully acknowledged}

\maketitle

\section*{Introduction}

We let $k$ be a regular $\Fp$-algebra, let $a, b \geqslant 2$ be
relatively prime integers, and consider the coordinate ring 
$A = k[x,y]/(x^b - y^a)$ of the planar cuspical curve $y^a = x^b$. The
algebraic $K$-groups of the ring $A$ decompose as the direct sum
$$K_q(A) = K_q(k) \oplus K_q(A,\mathfrak{a})$$
of the algebraic $K$-groups of the ground ring $k$ and the relative
algebraic $K$-groups of $A$ with respect to the ideal 
$\mathfrak{a} = (x,y)$. The purpose of this paper is to evaluate the relative
groups $K_q(A,\mathfrak{a})$ in terms of the big de~Rham-Witt groups of $k$. At
the moment, the calculation depends on a conjecture of a combinatorial
nature that we formulate below. We prove the conjecture in some
low-dimensional cases. This leads to new unconditional results for
$K_2$ and $K_3$.

To state the result, we first define
$$\ell(a,b,m) = \card \{ (i,j) \in \N \times \N \mid ai + bj = m \}$$
to be the number of expressions of the positive integer $m$ as a
linear combination $m = ai + bj$ with $(i,j)$ a pair of positive
integers, and next define
$$S(a,b,r) = \{ m \in \N \mid \ell(a,b,m) \leqslant r\} \subset \N,$$
where $r$ is a non-negative integer.  We recall that for every subset
$S \subset \N$ stable under division and every positive integer $e$,
the big de~Rham-Witt groups $\mathbb{W}_S\Omega_k^q$ and the
Verschiebung maps
$V_e \colon \mathbb{W}_{S/e}\Omega_k^q \to \mathbb{W}_S\Omega_k^q$ are
defined; see~\cite{h6}.

\begin{bigthm}\label{main}Let $k$ be a regular $\Fp$-algebra and let
$A = k[x,y]/(x^b - y^a)$ be the coordinate ring of a planar cuspical curve
with $a, b \geqslant 2$ relatively prime integers and with $p$ not dividing
$a$. Let $\mathfrak{a} = (x,y) \subset A$ be the ideal defining
the cusp. Assuming that Conjecture~\ref{mainconjecture} below holds,
there is a canonical long exact sequence 
$$\begin{xy}
(-36,4)*+{ \cdots }="a";
(0,4)*+{ \displaystyle{ \bigoplus\, ( \mathbb{W}_{S/b}\Omega_k^{q-2r} \!/
  V_a\mathbb{W}_{S/ab}\Omega_k^{q-2r} ) } }="b";
(57,4)*+{ \displaystyle{ \bigoplus\, ( \mathbb{W}_S \Omega_k^{q-2r} \!/
  V_a\mathbb{W}_{S/a}\Omega_k^{q-2r} ) } }="c";
(-36,-4)*+{ \phantom{\cdots} }="d";
(-18,-4)*+{ K_q(A,\mathfrak{a}) }="e";
(27,-4)*+{ \displaystyle{ \bigoplus\, ( \mathbb{W}_{S/b}\Omega_k^{q-1-2r} \!/
  V_a\mathbb{W}_{S/ab}\Omega_k^{q-1-2r} ) } }="f";
(66,-4)*+{ \cdots, }="g";
{ \ar "b";"a";};
{ \ar^-{V_b} "c";"b";};
{ \ar^-{\varepsilon} "e";"d";};
{ \ar^-{\partial} "f";"e";};
{ \ar "g";"f";};
\end{xy}$$
where the sums range over non-negative integers $r$ and where $S = S(a,b,r)$.
\end{bigthm}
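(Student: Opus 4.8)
The plan is to run the trace-method machinery, exploiting that $A = k[x,y]/(x^b-y^a)$ is the (pointed) monoid algebra $k[\Pi]$ on the numerical semigroup $\Pi = \langle a,b\rangle \subset (\N,+)$, with $\mathfrak{a}$ the augmentation ideal. The relative groups $K_*(A,\mathfrak{a})$ are $p$-primary torsion — rationally they vanish by Goodwillie's theorem since $A$ is an $\Fp$-algebra, and the prime-to-$p$ part vanishes by a rigidity argument as in the truncated polynomial case — so it is enough to compute $\TC_*(A,\mathfrak{a};p)$. The cyclotomic trace identifies this with $K_*(A,\mathfrak{a})$: one applies the theorem of Geisser--Hesselholt on the conductor square of the normalization $A \hookrightarrow k[t]$, $x\mapsto t^a$, $y\mapsto t^b$ — whose upper-right term $K(k[t],(t))$ vanishes by homotopy invariance of the $K$-theory of the regular ring $k[t]$ — together with the Dundas--Goodwillie--McCarthy theorem for the residual nilpotent ideals on the lower edge.

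Next I would analyse $\operatorname{THH}(A)$. There is an $S^1$-equivariant equivalence $\operatorname{THH}(k[\Pi]) \simeq \operatorname{THH}(k) \wedge N^{\cy}(\Pi)_+$, and since $\Pi$ is commutative the cyclic bar construction splits, $N^{\cy}(\Pi) = \coprod_{c\in\Pi} N^{\cy}(\Pi;c)$, according to the total weight $c = g_0+\cdots+g_k$. The weight-zero summand is $\operatorname{THH}(k)$, and contributes the $K_*(k)$-summand of Theorem~\ref{main}, leaving the positive-weight part. The cyclotomic structure relates the weight-$c$ piece to the weight-$(c/p)$ piece via the $p$-fold edgewise subdivision $\sd_p$ and passage to $C_p$-fixed points, so the weights organize into chains $c = p^v c'$ with $p \nmid c'$. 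Using Hesselholt's identification $\TR_*(k;p) = \mathbb{W}\Omega_k^*$ and the tom Dieck--type decomposition of the fixed points $\operatorname{THH}(A)^{C_{p^{n-1}}}$, the relative group $\TR^n_q(A,\mathfrak{a};p)$ is expressed as a sum, over positive weights $c$, of $C_{p^{n-1}}$-equivariant homotopy groups of $N^{\cy}(\Pi;c)$ smashed with suitable truncated de~Rham--Witt spectra of $k$.

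This is where Conjecture~\ref{mainconjecture} is used: it pins down the equivariant homotopy type of the weight pieces $N^{\cy}(\Pi;c)$ — equivalently, it gives an explicit equivariant triangulation of the associated cyclic polytopes, the ``new family of polytopes'' of the title — as wedges of representation spheres whose multiplicities are controlled by the counting function $\ell(a,b,-)$. Granting it, I would substitute this answer into the formula for $\TR^n_q(A,\mathfrak{a};p)$. The two generators $a,b$ of $\Pi$ give rise to two Verschiebung operators; the condition $\ell(a,b,m)\leqslant r$ is exactly what selects, at each level $r$, the truncation set $S = S(a,b,r)$ over which the de~Rham--Witt groups are formed, and the degree shift by $2r$ records the dimension of the contributing cells. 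The asymmetry in the hypotheses — $p\nmid a$ is imposed, $p\mid b$ is allowed — enters at this point: since $a$ is a $p$-adic unit, the relation $F_aV_a = a$ makes the tower in the $a$-direction stabilize, collapsing that direction to the single cokernel $\mathbb{W}_S\Omega_k^{q-2r}/V_a\mathbb{W}_{S/a}\Omega_k^{q-2r}$, whereas the $b$-direction persists as a genuine cofiber sequence carrying the Verschiebung $V_b$.

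Finally I would pass from $\TR$ to $\TC$, taking the homotopy limit over the restriction maps and the homotopy fiber of $1-F$ (equivalently, using the cofiber sequence in $\TR$ that realizes the $b$-direction), and then transport the answer to $K$-theory by the comparison above. This yields a long exact sequence of the asserted shape, with $\varepsilon$ induced by the trace, $\partial$ the connecting map, and transition map $V_b$. \textbf{The main obstacle}, apart from Conjecture~\ref{mainconjecture} itself, is the bookkeeping in this last part: one must control how the restriction and Frobenius operators interact with the weight-halving on $C_p$-fixed points and with the stratification by $\ell$, and check that the homotopy limit and the $1-F$ fiber sequence produce precisely the truncation sets $S(a,b,r)$, the cokernels by $V_a$, and the maps $V_b$ and $\partial$. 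Matching the index of the direct sum over $r$ and the $2r$-shifts with the combinatorics of the polytopes is the delicate point.
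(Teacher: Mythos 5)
Your overall skeleton---trace methods for the monoid algebra $k[\Pi]$, comparison with the normalization $k[t]$ via the conductor square, the collapse of the $a$-direction because $p\nmid a$ makes $V_a$ split injective with retraction $\tfrac{1}{a}F_a$, and a long exact sequence in the $b$-direction governed by $V_b$---is the paper's skeleton. But two linked steps, as you have written them, are genuine gaps. First, the object you propose to compute is the wrong one. The conductor-square argument you cite (Theorem~\ref{excision}: Geisser--Hesselholt for the bi-relative term, McCarthy for the nilpotent ideals $\mathfrak{a}/I$ and $\mathfrak{b}/J$), together with $K(k[t],(t))\simeq 0$, identifies $K(A,\mathfrak{a})$ with the homotopy fiber of $\TC(A;p)\to\TC(k[t];p)$, \emph{not} with $\TC(A,\mathfrak{a};p)=\operatorname{fib}(\TC(A;p)\to\TC(k;p))$. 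Since $\TC$ is not homotopy invariant, $\operatorname{fib}(\TC(k[t];p)\to\TC(k;p))$ is nonzero, so ``compute the positive-weight part of $\TC(A)$'' is not equivalent to computing $K(A,\mathfrak{a})$; the two differ by exactly that term. Your second and third paragraphs analyze only the weight pieces of $N^{\cy}(\Pi)$, i.e.\ the augmentation-relative object, so this is not merely a notational slip.

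Second---and this is where the plan actually breaks---Conjecture~\ref{mainconjecture} says nothing about the equivariant homotopy type of $N^{\cy}(\Pi;c)$. It concerns the quotients $X(a,b,m)=\Delta^{m-1}/\Sigma(a,b,m)$, which are the weight-$m$ pieces of the cofiber of $N^{\cy}(\langle t^a,t^b\rangle)_+\to N^{\cy}(\langle t\rangle)_+$ (Proposition~\ref{barconstructionlemma}), and it identifies them, only after inducing up to $\mathbb{T}$ and in homology, with $Y(a,b,m)$: a \emph{single} representation sphere $S^{\lambda(a,b,m)}$ of complex dimension $\ell(a,b,m)$, possibly smashed with $\widetilde{C}_a$ and/or $\widetilde{C}_b$---not a wedge of representation spheres with multiplicities. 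Moreover $N^{\cy}(\Pi;c)$ cannot have the form you assert: its homology is the weight-$c$ part of $\HH_*(\Z[\Pi])$, which (see the proof of Proposition~\ref{homologycalculation}) has free classes in degrees $0$ and $1$ as well as near $2\ell$, so your step ``substitute this answer into the formula for $\TR_q^n(A,\mathfrak{a};p)$'' has no input even granting the conjecture. The repair is the paper's route: work relative to the normalization throughout, use Proposition~\ref{barconstructionlemma} and Conjecture~\ref{mainconjecture} to replace the weight pieces of the cofiber by $\mathbb{T}_+\wedge_{C_m}Y(a,b,m)$, and then, following Hesselholt--Madsen, express the relative $\TC$ as the iterated mapping cone of the $V_a,V_b$-square of homotopy limits (Proposition~\ref{tcthm}), rather than as a $\holim_R$ followed by the fiber of $1-F$. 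Identifying the homotopy groups of the corners with $\bigoplus_r\mathbb{W}_{S(a,b,r)/n}\Omega_k^{q-2r}$ compatibly with the Verschiebung maps (Theorem~\ref{trregular} and the preferred generators of Proposition~\ref{trprimefield}) is substantially more than bookkeeping, though your account of how $p\nmid a$ enters at the end is correct in spirit.
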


We remark that if neither $a$ nor $b$ are divisible by $p$, then the
long exact sequence in the statement of Theorem~\ref{main} simplifies
to a canonical isomorphism
$$\xymatrix{
{ \displaystyle{ \bigoplus_{r \geqslant 0}\, ( \mathbb{W}_S\Omega_k^{q-2r} /
    (V_a\mathbb{W}_{S/a}\Omega_k^{q-2r} \! +
    V_b\mathbb{W}_{S/b}\Omega_k^{q-2r})) } } \ar[r]^-{\varepsilon} &
{ K_q(A,\mathfrak{a}) } \cr
}$$
with $S = S(a,b,r)$. Indeed, in this case, the maps $V_a$ and
$V_b$ both are injective. Similarly, if $k$ is a perfect field, whose
characteristic $p > 0$ may or may not divide $a$ or $b$, then 
Theorem~\ref{main} shows that for every even non-negative integer 
$q = 2r$, there is a canonical isomorphism
$$\xymatrix{
{ \mathbb{W}_S(k) / (V_a\mathbb{W}_{S/a}(k) +
    V_b\mathbb{W}_{S/b}(k)) } \ar[r]^-{\varepsilon} &
{ K_q(A,\mathfrak{a}), } \cr
}$$
where again $S = S(a,b,r)$, and that for every odd or negative integer
$q$, the relative $K$-group in question vanishes. We show in
Section~\ref{derhamwittsection} below that the
domain of this isomorphism is a $\mathbb{W}(k)$-module
of finite length $\frac{1}{2}(q+1)(a-1)(b-1)$. The precise structure of this
$\mathbb{W}(k)$-module and, more generally, of the big de~Rham-Witt
groups that appear in Theorem~\ref{main} depends on the decomposition
of $S$ into ``orbits'' of the multiplication-by-$p$ map.

We briefly outline the proof of Theorem~\ref{main} and refer to
Section~\ref{proofssection} below for more details. Let $B = k[t]$
and let $f \colon A \to B$, $g \colon A \to k$, and $h \colon B \to
k$ be the $k$-algebra homomorphisms that map $x$ and $y$ to $t^a$ and
$t^b$, $x$ and $y$ to $0$, and $t$ to $0$, respectively. We consider
the following diagram in which the two right-hand horizontal maps are the
cyclotomic trace maps from algebraic $K$-theory to topological cyclic
homology defined by
B\"{o}kstedt-Hsiang-Madsen~\cite{bokstedthsiangmadsen}.
$$\begin{xy}
(-22,7)*+{ K(A) }="10";
(0,7)*+{ K(A) }="11";
(23,7)*+{ \TC(A;p)\phantom{,} }="12";
(-22,-7)*+{ K(k) }="20";
(0,-7)*+{ K(B) }="21";
(23,-7)*+{ \TC(B;p), }="22";
{ \ar@{=} "10";"11";};
{ \ar^-{\tr} "12";"11";};
{ \ar^-{h} "20";"10";};
{ \ar^-{f} "21";"11";};
{ \ar_-{g} "20";"21";};
{ \ar^-{\tr} "22";"21";};
{ \ar^-{f} "22";"12";};
\end{xy}$$
Since $k$ is regular, the map induced by $g$ is a weak equivalence, by
the fundamental theorem in algebraic $K$-theory~\cite[Theorem~8,
Corollary]{quillen}, and hence the left-hand square is homotopy
cartesian. Moreover, as we explain in Theorem~\ref{excision} below,
results of McCarthy~\cite{mccarthy1} and of Geisser and the
author~\cite{gh4} implies that the right-hand square, too, is homotopy
cartesian. Hence, the mapping fiber $K(A,\mathfrak{a})$ of
the map of $K$-theory spectra induced by $h$ is canonically weakly
equivalent to the mapping fiber of the map of topological cyclic
homology spectra induced by $f$. The homotopy groups of the latter,
can be evaluated by the methods developed by Madsen and the author
in~\cite{hm1,hm2}, provided that the solution to the combinatorial
problem that we now proceed to describe is as stated in
Conjecture~\ref{mainconjecture} below.

We fix a positive integer $m$. Let $\mathbb{T}$ be the circle group of
complex numbers of modulus $1$, let $C_m \subset \mathbb{T}$ be the
subgroup of order $m$, and let $\zeta_m \in C_m$ be the generator
$\exp(2\pi i/m)$, where $i$ is a fixed square root of $-1$. Let
$\R[C_m]$ be the real regular representation, and let $\Delta^{m-1}
\subset \R[C_m]$ be the 
convex hull of $C_m \subset \R[C_m]$. We recall that $\Delta^{m-1}$ is
a simplicial complex whose set of simplices is the set of all
non-empty subsets $F \subset C_m$. We consider the sub-simplicial
complex
$$\Sigma(a,b,m) \subset \Delta^{m-1}$$
whose set of simplices consists of the non-empty subsets $F \subset
C_m$ satisfying the following condition: If $F =
\{\zeta_m^{r_1},\dots,\zeta_m^{r_k}\}$ with $0 \leqslant r_1 < \dots <
r_k < m$, then the $k$ gaps $r_{s+1}-r_s$, where $1 \leqslant s < k$,
and $r_1+m-r_k$ all can be expressed as $ai + bj$ with $(i,j)$ a
pair of non-negative integers. We note that $\Sigma(a,b,m)$ is
non-empty if and only if $m$ can be expressed as $m = ai + bj$ with
$(i,j)$ a pair of non-negative integers. The left action by the group
$C_m$ on $\R[C_m]$ restricts to left $C_m$-actions on $\Delta^{m-1}$
and $\Sigma(a,b,m)$ and induces a left $C_m$-action on the quotient
space 
$$X(a,b,m) = \Delta^{m-1}/\Sigma(a,b,m).$$
The problem that we wish to solve is to determine the homotopy type of
the induced pointed $\mathbb{T}$-space 
$\mathbb{T}_+ \wedge_{C_m} X(a,b,m)$.  

To state our conjectured solution to this problem, we suppose that $a
< b$ and choose a pair of integers $(c,d)$ with the property that
$$\begin{pmatrix}
a & b \cr
c & d \cr
\end{pmatrix} \in SL(2,\Z),$$
this being possible since $a$ and $b$ are relatively prime. The equation
$$\begin{pmatrix}
m \cr
n \cr
\end{pmatrix} = \begin{pmatrix}
a & b \cr
c & d \cr
\end{pmatrix} \begin{pmatrix}
i \cr
j \cr
\end{pmatrix}$$
defines a bijection from the set of pairs of positive integers
$(i,j)$ with $m = ai + bj$ onto the set of integers $n = ci +
dj$ in the open interval $(cm/a, dm/b)$. We define $\C(n)$ to be $\C$
considered as a real $C_m$-representation with $z \in C_m$ acting
through multiplication by $z^n$ and define $\lambda(a,b,m)$ to be the
direct sum
$$\lambda(a,b,m) = \bigoplus_{n \in (cm/a,dm/b) \cap \Z} \C(n)$$
as $n$ ranges over the integers in the open interval
$(cm/a,dm/b)$. We further consider the pointed $C_m$-space defined as
follows.
$$Y(a,b,m) = \begin{cases}
S^{\lambda(a,b,m)} & \text{if $a$ and $b$ do not divide $m$} \cr
\widetilde{C}_a \wedge S^{\lambda(a,b,m)} & \text{if $a$ but not $b$ divides $m$} \cr
S^{\lambda(a,b,m)} \wedge \widetilde{C}_b & \text{if $b$ but not $a$ divides $m$} \cr
\widetilde{C}_a \wedge S^{\lambda(a,b,m)} \wedge \widetilde{C}_b & \text{if both $a$ and $b$ divide $m$} \cr
\end{cases}$$
Here $S^{\lambda(a,b,m)}$ is the one-point compactification of
$\lambda(a,b,m)$, $\widetilde{C}_a$ is the mapping cone of the map
$C_{a+} \to S^0$ that collapses $C_a$ to the non-basepoint in $S^0$,
and $C_m$ acts on $C_a$ through the $(m/a)$th power map. We note
that the real $C_m$-representation $\lambda(a,b,m)$ and the pointed
$C_m$-space $Y(a,b,m)$ do not depend on the choice of $(c,d)$. 

In general, if $Z$ is a $C_m$-space and $m = st$, then the subspace
$Z^{C_s}$ fixed by the subgroup $C_s \subset C_m$ is a $C_m/C_s$-space
and we write $\rho_s^*Z^{C_s}$ for this space viewed as a $C_t$-space
with $C_t$ acting through the $s$th root $\rho_s \colon C_t \to
C_m/C_s$. Now, the family of pointed 
$C_m$-spaces $X(a,b,m)$ (resp.~$Y(a,b,m)$) comes equipped with
canonical isomorphisms, whenever $m = st$, of pointed $C_t$-spaces 
$$\xymatrix{
{ \rho_s^*X(a,b,m)^{C_s} } \ar[r]^-{r_{X,s}} &
{ X(a,b,t) } \cr
} \hskip8mm (\text{resp.\ } 
\xymatrix{
{ \rho_s^*Y(a,b,m)^{C_s} } \ar[r]^-{r_{Y,s}} &
{ Y(a,b,t) }
}).$$
The inverse of $r_{X,s}$ takes the class of the
point $\sum a_vv$ of the face $F \subset C_t$ to the class of the
point $(1/s)\sum a_{z^s}z$ of the face $F^{1/s} = \{ z \in  C_m \mid
z^s \in F\} \subset C_m$, and the inverse of $r_{Y,s}$ is induced by
the isomorphism $\lambda(a,b,t) \to \rho_s^*\lambda(a,b,m)^{C_s}$ that 
takes the summand indexed by  $n \in (ct/a,dt/b)$ to the summand
indexed by $ns \in (cm/a,dm/b)$ by the identity map
$\id \colon \C(n) \to \rho_s^*\C(ns)^{C_s}$. 

\setcounter{bigconj}{1}

\begin{bigconj}\label{mainconjecture}Given relative prime integers $1
  < a < b$, there exists a family indexed by positive integers $m$
of maps of pointed $C_m$-spaces
$$\begin{xy}
(-16,0)*+{X(a,b,m)}="a";
(16,0)*+{Y(a,b,m)}="b";
{ \ar^-{u(a,b,m)} "b";"a";};
\end{xy}$$
with the following properties.
\begin{enumerate}
\item[{\rm (1)}]If $m = st$, then the diagram of pointed $C_m$-spaces
$$\begin{xy}
(-24,7)*+{ \rho_s^*X(a,b,m)^{C_s} }="11";
(24,7)*+{ \rho_s^*Y(a,b,m)^{C_s} }="12";
(-24,-7)*+{ X(a,b,t) }="21";
(24,-7)*+{ Y(a,b,t) }="22";
{ \ar^-{\rho_s^*u(a,b,m)^{C_s}} "12";"11";};
{ \ar^-{r_{X,s}} "21";"11";};
{ \ar^-{r_{Y,s}} "22";"12";};
{ \ar^-{u(a,b,t)} "22";"21";};
\end{xy}$$
is homotopy commutative.
\item[{\rm (2)}]The induced maps of pointed \,$\mathbb{T}$-spaces
$$\begin{xy}
(-25,0)*+{ \mathbb{T}_+ \wedge_{C_m} X(a,b,m) }="a";
(25,0)*+{ \mathbb{T}_+\wedge_{C_m}Y(a,b,m) }="b";
{ \ar^-{\id \wedge u(a,b,m)} "b";"a";};
\end{xy}$$
induce isomorphisms of reduced singular homology groups.
\end{enumerate}
\end{bigconj}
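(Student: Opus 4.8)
I would prove Conjecture~\ref{mainconjecture} by induction on the number of prime factors of $m$, the heart of the matter being a combinatorial determination of the $C_m$-equivariant homotopy type of $X(a,b,m)$ out of the simplicial complex $\Sigma(a,b,m)$. The inductive framework is forced by part~(1): constructing $u(a,b,m)$ requires the maps $u(a,b,t)$ for the proper divisors $t\mid m$, which through the isomorphisms $r_{X,s},r_{Y,s}$ prescribe, up to homotopy, the restriction of $u(a,b,m)$ to each fixed set $X(a,b,m)^{C_s}$. The base case $m=1$ is trivial, both $X(a,b,1)$ and $Y(a,b,1)$ being $S^{0}$. There are three tasks in the inductive step: (i) identify the $C_m$-homotopy type of $X(a,b,m)$; (ii) build $u(a,b,m)$ realizing an equivalence and satisfying part~(1); and (iii) deduce the homology isomorphism of part~(2).

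\textbf{Task (i): the homotopy type of $X(a,b,m)$.} Since $a,b\geqslant 2$, the gap $1$ never lies in $\langle a,b\rangle=\{ai+bj\mid i,j\geqslant 0\}$, so $C_m$ is not a simplex of $\Sigma(a,b,m)$ and hence $\Sigma(a,b,m)\subseteq\partial\Delta^{m-1}$. The cofibration sequence $\Sigma(a,b,m)_+\to\Delta^{m-1}_+\to X(a,b,m)$ and the contractibility of $\Delta^{m-1}$ give $\widetilde H_q(X(a,b,m))\cong\widetilde H_{q-1}(\Sigma(a,b,m))$ for all $q\geqslant 0$ (with the convention $\widetilde H_{-1}(\emptyset)=\Z$ when $m\notin\langle a,b\rangle$), and the same on $C_s$-fixed sets via $r_{X,s}$. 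One must then show, matching $\widetilde H_{*}(Y(a,b,m))$ under this shift, that $\Sigma(a,b,m)$ is, up to homotopy, a single sphere $S^{2\ell(a,b,m)-1}$ when neither $a$ nor $b$ divides $m$, a wedge of $a-1$ or of $b-1$ copies of $S^{2\ell(a,b,m)}$ when exactly one of them divides $m$, and a wedge of $(a-1)(b-1)$ copies of $S^{2\ell(a,b,m)+1}$ when both do --- the number $(a-1)(b-1)$ being twice the genus of $\langle a,b\rangle$ --- and moreover that the $C_m$-action and the fixed sets are controlled well enough to compare with $Y(a,b,m)$. The natural tool is discrete Morse theory on the face poset of $\Sigma(a,b,m)$: a face whose cyclic gap sequence has enough slack is matched with the face obtained by inserting a canonically chosen vertex into a canonically chosen gap, and the critical faces that survive should biject with the lattice points of the polytope of the title --- the convex body cut out by the semigroup-gap inequalities --- whose cardinality is evaluated by an Ehrhart- or Pick-type count. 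Establishing that such a matching is acyclic, with exactly the predicted critical cells, uniformly in $m$, is the combinatorial core of the conjecture.

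\textbf{Task (ii): constructing $u(a,b,m)$.} Granting task~(i), $X(a,b,m)$ and $Y(a,b,m)$ have the same $C_m$-homotopy type, and one seeks a single equivalence $u(a,b,m)$ --- best thought of as a collapse of $X(a,b,m)=\Delta^{m-1}/\Sigma(a,b,m)$ onto the pertinent representation sphere, modified by the mapping cones $\widetilde{C}_a,\widetilde{C}_b$ when $a$ or $b$ divides $m$ --- whose restriction to $X(a,b,m)^{C_s}$ agrees up to homotopy with $r_{X,s}^{-1}\circ u(a,b,m/s)\circ r_{Y,s}$. As part~(1) asks only for homotopy commutativity, these restrictions are prescribed merely up to homotopy, compatibility over the overlaps $X(a,b,m)^{C_{ss'}}=(X(a,b,m)^{C_s})^{C_{s'}}$ following from the composition law for $r_{X,s},r_{Y,s}$; one extends over the free $C_m$-cells of $X(a,b,m)$ by equivariant obstruction theory. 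The obstructions lie in the relative cohomology of $X(a,b,m)/X(a,b,m)^{\mathrm{sing}}$, where $X(a,b,m)^{\mathrm{sing}}=\bigcup_{1<s\mid m}X(a,b,m)^{C_s}$, with coefficients in the homotopy groups of $Y(a,b,m)$; as that cohomology vanishes above, and those homotopy groups below, dimension $2\ell(a,b,m)$ by task~(i), only a short band of degrees can contribute, and none does when $a,b\nmid m$.

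\textbf{Task (iii), and the main obstacle.} Smashing the cofibration sequence $X(a,b,m)^{\mathrm{sing}}\to X(a,b,m)\to X(a,b,m)/X(a,b,m)^{\mathrm{sing}}$ with $\mathbb{T}_+$ over $C_m$, and likewise for $Y$, yields a map of cofibration sequences of $\mathbb{T}$-spaces. On the quotient term, $u(a,b,m)$ is a homology isomorphism since there it is a wedge of maps $\mathbb{T}_+\wedge S^{n}\to\mathbb{T}_+\wedge S^{n}$ pinned down by the top homology of task~(i); on the singular term one uses the homeomorphism $\mathbb{T}_+\wedge_{C_m}(C_m/C_s)_+\cong(\mathbb{T}/C_s)_+\cong\mathbb{T}_+$ to identify $\mathbb{T}_+\wedge_{C_m}X(a,b,m)^{C_s}$ with $\mathbb{T}_+\wedge_{C_{m/s}}X(a,b,m/s)$ and invokes the inductive hypothesis, assembling the different isotropies by Mayer--Vietoris over the subgroup lattice and concluding with the five lemma. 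The step I expect to be the genuine obstacle is task~(i): proving, uniformly in $m$, that $\Sigma(a,b,m)$ has the stated high connectivity and top homology is precisely the content that keeps Conjecture~\ref{mainconjecture} open in general. In the low-dimensional range $\ell(a,b,m)\leqslant 1$, which already suffices for the unconditional statements on $K_2$ and $K_3$, it becomes tractable: then $S^{\lambda(a,b,m)}$ is $S^{0}$ or a single representation sphere $S^{\C(n)}$, the complex $\Sigma(a,b,m)$ has dimension at most $2$, and both the Morse matching and the short list of potential obstructions can be verified by direct inspection.
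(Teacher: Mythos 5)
This statement is Conjecture~\ref{mainconjecture}, and the paper does not prove it: it only (a) constructs candidate maps $u(a,b,m) = h\circ x$ by projecting $\Delta^{m-1}$ onto the stunted regular cyclic polytope $P(a,b,m)\subset\bar{\lambda}(a,b,m)$ and radially dilating away from the origin, \emph{assuming} the combinatorial Conjecture~\ref{combinatorialconjecture}, which yields part~(1); and (b) verifies part~(2) only for $\ell(a,b,m)=0$, and for $\ell(a,b,m)=1$ when neither $a$ nor $b$ divides $m$ (Propositions~\ref{dimensiontwo} and~\ref{dimensiontwotheorem}). Your proposal likewise is not a proof, and you say so: your task~(i) --- determining the $C_m$-homotopy type of $\Sigma(a,b,m)$ by an acyclic discrete Morse matching with the predicted critical cells --- is exactly the open content, and in fact it is strictly more than the paper's strategy requires. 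The paper states explicitly that it does not know how to evaluate $\tilde{H}_*(X(a,b,m))$; it sidesteps this by computing $\tilde{H}_*(\mathbb{T}_+\wedge_{C_m}X(a,b,m))$ through the identification with $N^{\cy}(\langle t\rangle,\langle t^a,t^b\rangle;m)$ and Hochschild homology (Proposition~\ref{homologycalculation}, Corollary~\ref{homologyagrees}), so that part~(2) reduces to a surjectivity check carried out on explicit cycles built from the facet structure of $P(a,b,m)$. So the genuine gap is that your reduction terminates in an unproved combinatorial statement at least as hard as the one the paper isolates, with no indication of how the matching is to be produced or shown acyclic uniformly in $m$.

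Beyond that, two steps of your outline are weaker than claimed. First, the obstruction-theoretic construction in task~(ii): the free $C_m$-cells of $X(a,b,m)=\Delta^{m-1}/\Sigma(a,b,m)$ go up to dimension $m-1$, far above $2\ell(a,b,m)$, and the coefficient groups $\pi_q(Y(a,b,m))$ are homotopy groups of (wedges of) spheres, which do not vanish above the bottom dimension; the assertion that ``only a short band of degrees can contribute, and none does when $a,b\nmid m$'' therefore needs an argument controlling $H^*_{C_m}(X(a,b,m),X(a,b,m)^{\mathrm{sing}})$ in all degrees, which again presupposes task~(i) together with the full fixed-point analysis, and even then nonzero obstruction groups may survive. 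By contrast, the paper's dilation construction gives the compatibility~(1) essentially on the nose. Second, note that the conjecture does not ask for a $C_m$-equivalence, only for homotopy compatibility with fixed points and a homology isomorphism after applying $\mathbb{T}_+\wedge_{C_m}(-)$; your plan aims at the stronger statement, which is supported by Welker's computations but is not needed, and this extra strength is what forces you into the computation of $\tilde{H}_*(\Sigma(a,b,m))$ that the paper deliberately avoids. Where your outline and the paper overlap --- the range $\ell(a,b,m)\leqslant 1$ --- the paper's verification proceeds via the projections of $Q(a,b,m)$ onto the summands $\C(n)$ and explicit generating cycles, not via a Morse matching; an unconditional proof of your task~(i) already in the missing case $\ell(a,b,m)=1$ with $a\mid m$ would extend Theorem~\ref{smalldegrees} and would be a genuine advance.
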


\setcounter{bigthm}{2}

We briefly discuss Conjecture~\ref{mainconjecture} and refer to 
Section~\ref{conjecturesection} for details. The singular
homology groups of the domain and target of the map in~(2) are
finitely generated and are known to be abstractly isomorphic; see
Corollary~\ref{homologyagrees}. Hence,
it suffices to define a family of maps $u(a,b,m)$ that satisfy~(1) and
to show that the induced maps in~(2) induce surjections of homology
groups. By contrast, we do not know how to evaluate the homology
groups of the spaces $X(a,b,m)$, although computer calculations by
Welker~\cite{welker} suggest that the maps $u(a,b,m)$ are
weak equivalences. In Section~\ref{conjecturesection}, we formulate
the combinatorial Conjecture~\ref{combinatorialconjecture} and,
assuming this conjecture, construct maps $u(a,b,m)$ that
satisfy~(1). We prove in Proposition~\ref{dimensiontwo} that
Conjecture~\ref{combinatorialconjecture} holds for all positive
integers $m$ with $\ell(a,b,m) \leqslant 1$. Using this result, we
prove in Proposition~\ref{dimensiontwotheorem} that
Conjecture~\ref{mainconjecture} holds for all positive integers $m$
such that either $\ell(a,b,m) =0$ or $\ell(a,b,m) = 1$ and neither $a$
nor $b$ divides $m$. This, in turn, implies the following unconditional
result, which extends earlier calculations by
Krusemeyer~\cite[Proposition~12.1]{krusemeyer} of $K_1$.

\begin{bigthm}\label{smalldegrees}The long exact sequence in
Theorem~\ref{main} is valid for all $q \leqslant 2$. If $p$ divides
neither $a$ nor $b$, then the sequence is valid for all $q \leqslant 3$.
\end{bigthm}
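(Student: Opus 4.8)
The plan is to run the argument that proves Theorem~\ref{main} (Section~\ref{proofssection}) in the bottom few degrees, where only the spaces $X(a,b,m)$ with $\ell(a,b,m)\leqslant 1$ intervene, and to feed into it the cases of Conjecture~\ref{mainconjecture} established in Proposition~\ref{dimensiontwotheorem}. Recall that $K_q(A,\mathfrak{a})$ is the $q$-th homotopy group of the mapping fibre of $f\colon\TC(A;p)\to\TC(B;p)$, that this mapping fibre splits according to weight, and that the weight-$m$ summand is built from the pointed $\mathbb{T}$-space $\mathbb{T}_+\wedge_{C_m}X(a,b,m)$ and from $\operatorname{THH}(k)$. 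By Corollary~\ref{homologyagrees} the reduced homology of $\mathbb{T}_+\wedge_{C_m}X(a,b,m)$ is concentrated in degrees $\geqslant 2\ell(a,b,m)$, and in degrees $\geqslant 2\ell(a,b,m)+1$ if $a\mid m$ or $b\mid m$; since $\pi_*\operatorname{THH}(k)$ is concentrated in non-negative degrees, the weight-$m$ summand therefore contributes to $K_q(A,\mathfrak{a})$ only for $q\geqslant 2\ell(a,b,m)$. Consequently, for $q\leqslant 3$ only the weights with $\ell(a,b,m)\in\{0,1\}$ play a role, and their contributions account precisely for the $r=0$ and $r=1$ summands of the long exact sequence of Theorem~\ref{main}, with $S=S(a,b,r)$. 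By Proposition~\ref{dimensiontwotheorem}, Conjecture~\ref{mainconjecture} holds for every $m$ with $\ell(a,b,m)=0$ and for every $m$ with $\ell(a,b,m)=1$ for which neither $a$ nor $b$ divides $m$; what remains is to control the weights $m$ with $\ell(a,b,m)=1$ and $a\mid m$ or $b\mid m$.

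For such a weight, $Y(a,b,m)$ carries an extra smash factor $\widetilde{C}_a$, respectively $\widetilde{C}_b$, so by the sharper connectivity bound above the weight-$m$ summand contributes to $K_q(A,\mathfrak{a})$ only for $q\geqslant 3$; in particular it is invisible for $q\leqslant 2$, which settles the statement for $q\leqslant 2$ using Proposition~\ref{dimensiontwotheorem} alone. For the range $q\leqslant 3$ one invokes the extra hypothesis that $p$ divides neither $a$ nor $b$. The factor $\widetilde{C}_a$ is acted on through the surjection $C_m\twoheadrightarrow C_a$, on which every $p$-subgroup acts trivially, and after the construction $\mathbb{T}_+\wedge_{C_m}(-)$ its only effect on homology is the introduction of $a$-torsion; likewise $\widetilde{C}_b$ introduces only $b$-torsion when $p\nmid b$, and the unique remaining weight with $\ell(a,b,m)=1$ and $ab\mid m$, namely $m=2ab$, is handled the same way. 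Hence these summands vanish after $p$-completion and contribute nothing to $\TC(A;p)$, and so nothing to $K_q(A,\mathfrak{a})$ for $q\leqslant 3$. Equivalently, $p\nmid ab$ makes $V_a$ and $V_b$ injective, so the connecting map $\partial$ of the long exact sequence of Theorem~\ref{main} vanishes in degrees $\leqslant 3$, the sequence degenerates as in the remark following that theorem, and the weight-$m$ contribution of such an $m$ lies in the image of the injective map $V_a$, respectively $V_b$, hence is killed in the relevant quotient. Either way, only the weights covered by Proposition~\ref{dimensiontwotheorem} survive.

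It then remains to assemble the $r=0$ and $r=1$ terms of the sequence of Theorem~\ref{main} in degrees $q\leqslant 2$, respectively $q\leqslant 3$, from the homotopy types just provided; this is the same de~Rham-Witt bookkeeping as in the proof of Theorem~\ref{main}, now restricted to the truncation sets $S(a,b,0)$ and $S(a,b,1)$ and, in the range $q\leqslant 3$, simplified by the injectivity of $V_a$ and $V_b$. I expect the principal obstacle to lie in the first step: reconstructing from the proof of Theorem~\ref{main} the weight-by-weight description of the mapping fibre precisely enough to establish the connectivity estimate above, in particular the improvement by one in the presence of the factors $\widetilde{C}_a$ and $\widetilde{C}_b$, and then identifying in the low-weight cases the resulting homotopy groups with the $r=0$ and $r=1$ terms of Theorem~\ref{main}. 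Once that has been checked, the remaining steps are routine.
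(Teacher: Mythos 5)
Your proposal is correct and is essentially the paper's own argument: the paper deduces Theorem~\ref{smalldegrees} from Proposition~\ref{dimensiontwotheorem} via the argument of Remark~\ref{partialresult}, i.e.\ by rerunning the proof of Theorem~\ref{main} and noting, from Proposition~\ref{homologycalculation} (and Corollary~\ref{homologyagrees}), that the weights $m$ not covered by the proposition --- those with $\ell(a,b,m)=1$ and $a\mid m$ or $b\mid m$ --- have homology concentrated in degree $2\ell(a,b,m)+1=3$ and consisting only of $a$- or $b$-torsion, so they affect nothing in degrees $q\leqslant 2$ and, when $p$ divides neither $a$ nor $b$, nothing $p$-adically in degree $3$ either. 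The only cosmetic differences are that the sharper connectivity bound should be credited to the computation of the homology of $\mathbb{T}_+\wedge_{C_m}X(a,b,m)$ in Proposition~\ref{homologycalculation} rather than to the $\widetilde{C}_a$- or $\widetilde{C}_b$-factor of $Y(a,b,m)$ (which is unavailable for these weights precisely because the conjecture is open there), and that your ``equivalently'' aside about $V_a$ and $V_b$ is unnecessary for the argument.
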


Proving Conjecture~\ref{mainconjecture} would likely require an
understanding of the facet structure of the stunted regular cyclic
polytopes $P(a,b,m)$ that we introduce in
Section~\ref{conjecturesection} below. At the moment, this important
problem is completely open. However, since algebraic $K$-theory has a
tendency to suggest deep yet solvable problems, one may well hope
that a complete solution can be found.

We finally mention that for $k$ a field of characteristic zero, the
cyclic homology groups of $A$ were calculated by Geller, Reid, and
Weibel~\cite[Theorem~9.2]{gellerreidweibel1} and that, in view of the
affirmation by Corti\~{n}as~\cite{cortinas} of the KABI-conjecture made
in~\cite{gellerreidweibel1}, this gives a complete calculation of
the groups $K_q(A,\mathfrak{a})$.

It is a great pleasure to thank Volkmar Welker for his
very helpful computer calculations which strongly support
Conjecture~\ref{mainconjecture}. This paper was written in part while
the author was visiting the Australian National University. The author
would like to express his sincere gratitude to the university and to
Jim Borger in particular for their hospitality and support.

\section{Big de~Rham-Witt forms}\label{derhamwittsection}

In this section, we recall the groups $\mathbb{W}_S\Omega_k^q$ of big
de~Rham-Witt forms that appear in Theorem~\ref{main} in the
introduction. These groups were introduced in~\cite{hm2}, but a better
and more direct construction is given in~\cite{h6}. 

The big de~Rham-Witt complex of a commutative ring $k$ is the initial
example of a rather complex algebraic structure called a Witt
complex. To state the definition, we say that a subset $S \subset \N$
of the set of positive integers is a truncation set if whenever $m =
st \in S$ then both $s \in S$ and $t \in S$. We consider the
truncation sets as the objects of a category with a single morphism
from $S$ to $T$ if $S \subset T$ and recall that there is a
contravariant functor that to a truncation set $S$ 
associates the ring $\mathbb{W}_S(k)$ of big Witt vectors in $k$
indexed by $S$. If $n$ is a positive integer and $S$ a truncation set,
then one defines $S/n = \{ s \in \N \mid ns \in S\}$. It is a
truncation set and there are natural maps $F_n \colon \mathbb{W}_S(k)
\to \mathbb{W}_{S/n}(k)$ and $V_n \colon \mathbb{W}_{S/n}(k) \to
\mathbb{W}_S(k)$ called the $n$th Frobenius and the $n$th Verschiebung
maps, respectively.

The following definition is~\cite[Definition~4.1]{h6}. 

\begin{definition}\label{wittcomplex}Let $k$ be a commutative ring. A
Witt complex $E$ over $k$ is a contravariant functor that to a
truncation set $S$ associates an anti-symmetric graded ring $E_S^*$
and takes colimits to limits together with a natural map of rings
$$\xymatrix{
{ \mathbb{W}_S(k) } \ar[r]^-{\eta_S} &
{ E_S^0 } \cr
}$$
and natural maps of graded abelian groups
$$\xymatrix{
{ E_S^q } \ar[r]^-{d} &
{ E_S^{q+1} } &
{ E_S^q } \ar[r]^-{F_n} &
{ E_{S/n}^q } &
{ E_{S/n}^q } \ar[r]^-{V_n} &
{ E_S^q } &
{ (n \in \N) } \cr
}$$
such that the following~(1)--(5) hold.
\begin{enumerate}
\item[(1)]If $\omega \in E_S^q$ and $\omega' \in E_S^{q'}$ then
$$\begin{aligned}
d(\omega \cdot \omega') & = d\omega \cdot \omega' + (-1)^q\omega \cdot
d\omega' \cr
d(d(\omega)) & = d\log\eta_S([-1]_S) \cdot d\omega. \cr
\end{aligned}$$
\item[(2)]If $m$ and $n$ are positive integers, then
$$\begin{aligned}
{} & F_1 = V_1 = \id; \hskip7mm F_mF_n = F_{mn}; \hskip7mm V_nV_m =
V_{mn}; \cr
{} & F_nV_n = n \id; \hskip7mm F_mV_n = V_nF_m \hskip3mm \text{if
  $(m,n) = 1$}; \cr
{} & F_n\eta_S = \eta_{S/n}F_n; \hskip7mm V_n\eta_{S/n} =
\eta_SV_n. \cr
\end{aligned}$$
\item[(3)]If $n$ is a positive integer, $\omega \in E_S^q$, $\omega'
  \in E_S^{q'}$, and $\omega \in E_{S/n}^{q''}$, then
$$F_n(\omega \cdot \omega') = F_n(\omega) \cdot F_n(\omega'),
\hskip7mm
\omega \cdot V_n(\omega'') = V_n(F_n(\omega)\cdot \omega'').$$
\item[(4)]If $n$ is a positive integer and $\omega \in E_S^q$, then
$$F_ndV_n(\omega) = d\omega +
  (n-1)d\log\eta_{S/n}([-1]_{S/n}) \cdot \omega.$$
\item[(5)]If $n$ is a positive integer, $S$ a truncation set,
and $a \in k$, then
$$F_nd\eta_S([a]_S) =
\eta_{S/n}([a]_{S/n}^{n-1})d\eta_{S/n}([a]_{S/n}),$$
where $[a]_S \in \mathbb{W}_S(k)$ is the Teichm\"{u}ller
representative.
\end{enumerate}
A map of Witt complexes over $k$ is a natural map $f_S \colon E_S^*
\to E_S'{}^{\!\!*}$ of graded rings such that $f_S\eta_S = \eta_S'$,
$d'f_S = f_Sd$, $f_{S/n}F_n = F_n'f_S$, and $f_SV_n = V_n'f_{S/n}$.
\end{definition}

\begin{remark}\label{truncationsetremark}If $m$ is positive integer,
then we define $\langle m \rangle$ to be the truncation set of
divisors of $m$. We note that if also $n$ is a positive integer, then
$\langle m\rangle/n = \langle m/n\rangle$, if $n$ divides $m$, and
$\langle m \rangle/n = \emptyset$, otherwise. Hence, it makes sense to
restrict a Witt complex $E$ to the truncation sets of the form
$\langle m \rangle$. This restricted Witt complex, in turn, determines
$E$ up to unique isomorphism. Indeed, every truncation set $S$ is
equal to the union of the truncation sets $\langle m \rangle$ with $m
\in S$, and hence,
$$E_S^q = \lim_{m \in S} E_{\langle m \rangle}^q$$
with $S$ ordered under division.
\end{remark}

The big de~Rham-Witt complex $\mathbb{W}_S\Omega_k^*$ is defined, up
to unique isomorphism, to be an initial object in the category of Witt
complexes over $k$. An explicit construction is given
in~\cite[Section~4]{h6}. It is proved in~loc.~cit., Addendum~4.8,
that the canonical maps $\eta_{\{1\}} \colon \Omega_k^* \to
\mathbb{W}_{\{1\}}\Omega_k^*$ and 
$\eta_S \colon \mathbb{W}_S(k) \to \mathbb{W}_S\Omega_k^0$ from the
differential graded ring of de~Rham forms and the ring of big Witt
vectors, respectively, are isomorphisms; whence the naming. The
element $d\log\eta_S([-1]_S)$ is a manifestation of the Hopf class
$\eta$. It vanishes if $2$ is either invertible or nilpotent in
$k$. Finally, we write 
$R_T^S \colon \mathbb{W}_S\Omega_k^* \to \mathbb{W}_T\Omega_k^*$ for
the map of graded rings induced by the inclusion of truncation
sets $T \subset S$ and call it the restriction map. It is a surjective
map.

We now let $p$ be a prime number and assume that $k$ is a
$\Z_{(p)}$-algebra. In this case, the big de~Rham-Witt groups admit
the following $p$-typical decomposition. Let $P \subset \N$ be the
truncation set consisting of the powers of $p$. We say that a
truncation set $S$ is $p$-typical if $S \subset P$. The finite
$p$-typical truncation sets are the empty set $\emptyset$ and the sets
$\langle p^{v-1} \rangle = \{1, p, \dots, p^{v-1}\}$ with $v$ a
positive integer. If $S$ is any truncation set, then we define $S'
\subset S$ to be the sub-truncation set consisting of the elements 
$e \in S$ that are not divisible by $p$. In this situation, the map
\begin{equation}\label{drwdecomposition}
\xymatrix{
{ \mathbb{W}_S\Omega_k^q } \ar[r]^-{\gamma} &
{ \prod_{ e \in S'} \mathbb{W}_{(S/e) \cap P}\Omega_k^q } \cr
}
\end{equation}
whose $e$th component is the composite map
$$\xymatrix{
{ \mathbb{W}_S\Omega_k^q } \ar[r]^-{F_e} &
{ \mathbb{W}_{S/e}\Omega_k^q } \ar[r]^-{R} &
{ \mathbb{W}_{(S/e) \cap P}\Omega_k^q } \cr
}$$
is an isomorphism; see~\cite[Corollary~1.2.6]{hm2}. The decomposition
may be memorized by noting that $S$ is the disjoint union of the
orbits $S \cap eP$ of multiplication by $p$ and that multiplication by
$e$ defines a bijection of the $p$-typical truncation set $(S/e) \cap P$
onto the orbit $S \cap eP$. We also spell out the $p$-typical
decomposition of the maps $F_s$, $V_s$, and $R_T^S$. We write 
$s = p^vs'$ with $s'$ not divisible by $p$. The following three square
diagrams with $F_s^{\gamma}$, $V_s^{\gamma}$, and $(R_T^S)^{\gamma}$
defined below commute.
$$\xymatrix{
{ \mathbb{W}_S\Omega_k^q } \ar[r]^-{\gamma} \ar@<-.7ex>[d]_-{F_s} &
{ \prod \mathbb{W}_{(S/e) \cap P}\Omega_k^q } \ar@<-.7ex>[d]_-{F_s^{\gamma}} &
{ \mathbb{W}_S\Omega_k^q } \ar[r]^-{\gamma} \ar[d]^-{R_T^S} &
{ \prod \mathbb{W}_{(S/e) \cap P}\Omega_k^q } \ar[d]^-{(R_T^S)^{\gamma}} \cr
{ \mathbb{W}_{S/s}\Omega_k^q } \ar[r]^-{\gamma} \ar@<-.7ex>[u]_-{V_s} &
{ \prod \mathbb{W}_{(S/se) \cap P}\Omega_k^q } 
\ar@<-.7ex>[u]_-{V_s^{\gamma}} &
{ \mathbb{W}_T\Omega_k^q } \ar[r]^-{\gamma} &
{ \prod \mathbb{W}_{(T/e) \cap P}\Omega_k^q } \cr
}$$
Here, the map $F_s^{\gamma}$ takes the factor indexed by
$e \in S' \cap s'\,\N$ to the factor indexed by $e/s' \in (S/s)'$ by
the map $F_{p^v}$ and annihilates the factors indexed by 
$e \in S' \smallsetminus s'\,\N$. The map $V_s^{\gamma}$ takes the
factor indexed by $e \in (S/s)'$ to the factor indexed by $s'e \in S'$
by the map $s'\,V_{p^v}$. Finally, the map $(R_T^S)^{\gamma}$ takes the
factor indexed by $e \in T' \subset S'$ to the factor indexed by 
$e \in T'$ by the map $\smash{ R_{(T/e) \cap P}^{(S/e) \cap P} }$ and 
annihilates the factors indexed by $e \in S' \smallsetminus T'$.

We also remark that for any commutative ring $k$, the group
$$W_v\Omega_k^q = \mathbb{W}_{\{1,p,\dots,p^{v-1}\}}\Omega_k^q$$
agrees, up to unique isomorphism, with the $p$-typical de~Rham-Witt
group defined in~\cite{hm3} for $p$ odd and in~\cite{costeanu} for $p
= 2$. If $k$ is an $\Fp$-algebra, then the common group further
agrees, up to unique isomorphism, with the classical $p$-typical
de~Rham-Witt group defined in~\cite{illusie}. If $k$ is a regular
$\Fp$-algebra, then the structure of the $w$th graded piece
$\gr^wW_v\Omega_k^q$ for the descending filtration of $W_v\Omega_k^q$
given by the kernels $\Fil^wW_v\Omega_k^q = V^wW_{v-w}\Omega_k^q +
dV^wW_{v-w}\Omega_k^q \subset W_v\Omega_k^q$ of the restriction maps
$W_v\Omega_k^q \to W_w\Omega_k^q$  is determined
in~\cite[Corollary~I.3.9]{illusie} in terms of the groups of de~Rham
forms $\Omega_k^q$ and the Cartier operator. We also remark that if
$k$ is a perfect $\Fp$-algebra, then $W_v\Omega_k^q$ vanishes for all
$q > 0$, by op.~cit., Proposition~I.1.6.

We will now examine the truncation sets $S(a,b,r)$ defined in the
introduction and the corresponding groups of big de~Rham-Witt forms
$\mathbb{W}_{S(a,b,r)}\Omega_k^q$ in more detail. So let again $1 < a <
b$ be a fixed pair of relatively prime integers. We choose a pair of
integers $(c,d)$ with $ad-bc = 1$ and recall that the number
$\ell(a,b,m)$ of ways in which $m$ can be written as $m = ai + bj$
with $(i,j)$ a pair of positive integers is equal to the number of
integers in the open interval $(cm/a,dm/b)$. We first establish some
basic properties of the function $\ell(a,b,m)$ and the truncation set
$S(a,b,r)$.

\begin{lemma}\label{sylvester}The function that to a positive integer
$m$ associates the non-negative integer $\ell(a,b,m)$ has the
following properties.
\begin{enumerate}
\item[{\rm (1)}]$\ell(a,b,m+ab) = \ell(a,b,m) + 1$.
\item[{\rm (2)}]If \,$rab+1 \leqslant m \leqslant (r+1)ab$ with $r$ a
non-negative integer, then $\ell(a,b,m) = r$ or $\ell(a,b,m) =
r+1$. If, in addition, $m$ is divisible by $a$ or $b$, then
$\ell(a,b,m) = r$.
\item[{\rm (3)}]There are exactly $ab$ positive integers $m$ with $\ell(a,b,m)
  = r \geqslant 1$.
\item[{\rm (4)}]There are exactly $\frac{1}{2}(a+1)(b+1) - 1$
  positive integers $m$ with $\ell(a,b,m) = 0$.
\item[{\rm (5)}]If $s$ is a divisor in $m$, then $\ell(a,b,s)
  \leqslant \ell(a,b,m)$. 
\end{enumerate}
\end{lemma}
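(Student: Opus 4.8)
The plan is to establish the five statements in the order listed, since (1) is the engine that drives (2)--(4), while (5) is essentially independent and follows from a direct substitution argument. Throughout I will freely use the stated bijection between pairs of positive integers $(i,j)$ with $ai+bj = m$ and integers $n$ in the open interval $(cm/a, dm/b)$, whose length is $dm/b - cm/a = (ad-bc)m/(ab) = m/(ab)$.

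\emph{Proof of (1).} If $(i,j)$ is a pair of positive integers with $ai+bj = m$, then $(i+b, j)$ and $(i, j+a)$ are both pairs of positive integers summing to $m+ab$; conversely, if $(i',j')$ is a pair of positive integers with $ai'+bj' = m+ab$, then since $m = ai'+bj'-ab = a(i'-b)+bj' = ai'+b(j'-a)$ and $m > 0$, at least one of $i'-b$, $j'-a$ is positive, and in fact exactly the shift that keeps the other coordinate positive works. The cleanest route, though, is via the interval description: $\ell(a,b,m)$ counts lattice points in an open interval of length $m/(ab)$ whose endpoints are $cm/a$ and $dm/b$, and passing from $m$ to $m+ab$ translates both endpoints by the integers $cb$ and $da$ respectively—wait, more precisely the endpoints become $c(m+ab)/a = cm/a + cb$ and $d(m+ab)/b = dm/b + da$, and since $da - cb = ad - bc = 1$, the interval grows by exactly one unit of length at the correct end. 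A careful bookkeeping of whether the new endpoints are integers (they are, iff $a \mid m$ resp.\ $b \mid m$, by coprimality) shows the lattice-point count increases by exactly $1$; this simultaneously proves the refinement about divisibility that is needed in (2). The minor obstacle here is handling the boundary cases where an endpoint is an integer, which must be done carefully because the interval is open.

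\emph{Proof of (2), (3), (4).} For (2): iterating (1) gives $\ell(a,b,m) = \ell(a,b,m - rab)$ whenever $m - rab \geqslant 1$, so it suffices to treat $1 \leqslant m \leqslant ab$ and show $\ell(a,b,m) \in \{0,1\}$, with value $0$ whenever $a \mid m$ or $b \mid m$. But for $1 \leqslant m \leqslant ab$ the interval $(cm/a, dm/b)$ has length $m/(ab) \leqslant 1$, so it contains at most one integer; and if $a \mid m$, say $m = a m_0$ with $1 \leqslant m_0 \leqslant b$, the lower endpoint $cm_0$ is an integer and the open interval of length $m_0/b \leqslant 1$ starting just above an integer contains no integer (using $m_0 \leqslant b$, with the case $m_0 = b$, i.e.\ $m = ab$, checked directly), and symmetrically for $b \mid m$. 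Then (3) is immediate from (1): the map $m \mapsto m + ab$ is a bijection from $\{m : \ell(a,b,m) = r\}$ to $\{m : \ell(a,b,m) = r+1\}$ for $r \geqslant 1$, so it is enough to count $m$ with $\ell(a,b,m) = 1$; by (2) these all lie in $[1, 2ab]$, and a counting argument—total integers in $[1,2ab]$ is $2ab$, those with $\ell = 0$ among them are the numerical-semigroup gaps below $2ab$ together with the Frobenius-type bookkeeping—yields exactly $ab$. For (4), the positive integers $m$ with $\ell(a,b,m) = 0$ are exactly those \emph{not} expressible as $ai + bj$ with $i,j \geqslant 1$, equivalently those with $m \notin ab + \langle a, b\rangle_{\mathrm{semigroup}}$ in an appropriate sense; the count $\tfrac12(a+1)(b+1) - 1$ should be derived by combining the classical Sylvester count $\tfrac12(a-1)(b-1)$ of gaps of the numerical semigroup $\langle a,b\rangle$ with the contributions of multiples of $a$, of $b$, and the overlap, using inclusion–exclusion. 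I expect the arithmetic in (3) and (4) to be the fiddliest part—getting the off-by-one terms right—but it is elementary once (1) and (2) are in hand.

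\emph{Proof of (5).} Suppose $s \mid m$, say $m = st$. If $\ell(a,b,s) = 0$ there is nothing to prove, so assume $(i,j)$ is a pair of positive integers with $ai + bj = s$. Then $(ti, tj)$ is a pair of positive integers with $a(ti) + b(tj) = st = m$, giving a map from witnessing pairs for $s$ to witnessing pairs for $m$; this map is visibly injective (divide by $t$ to recover $(i,j)$), so $\ell(a,b,s) \leqslant \ell(a,b,m)$. Equivalently, and more in the spirit of the interval picture, multiplication by $t$ carries the integers in $(cs/a, ds/b)$ injectively into the integers in $(cst/a, dst/b) = (cm/a, dm/b)$. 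This establishes (5) and completes the proof of the lemma. $\qed$
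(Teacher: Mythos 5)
Your parts (1), (2) and (5) are essentially correct and coincide with the paper's arguments: (1) and (2) via the count of integers in the open interval $(cm/a,dm/b)$ of length $m/ab$, and (5) by scaling a representation of $s$ by $t=m/s$. (In (1) the ``careful bookkeeping'' you postpone is in fact unnecessary: translating by the integer $bc$ does not change the lattice-point count, and the right endpoint then moves by $ad-bc=1$, so the count increases by the number of integers in a half-open interval of length one, which is exactly one, regardless of whether any endpoint is an integer; this is precisely the paper's observation that $bc$ and $ad$ are consecutive integers.)

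The genuine gap is in (3) and (4), where the decisive counts are asserted rather than proved. In (3) you correctly reduce to counting the $m$ with $\ell(a,b,m)=1$, but then offer only ``a counting argument \dots yields exactly $ab$''; the route you sketch (subtracting the $\ell=0$ elements of $[1,2ab]$ from $2ab$) would also require the number of $m\leqslant 2ab$ with $\ell(a,b,m)=2$, so it silently presupposes (4) and the gap count. The clean finish, which is the paper's proof and works for every $r\geqslant 1$ directly without induction, is a pairing: on $[1,ab]$ the value is $0$ or $1$, on $[ab+1,2ab]$ it is $1$ or $2$, and by (1) the $m\in[ab+1,2ab]$ with $\ell=1$ are in bijection with the $m\in[1,ab]$ with $\ell=0$, so the number of $m$ with $\ell=1$ equals the cardinality of $[1,ab]$, namely $ab$. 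In (4) you appeal to the classical Sylvester count $\tfrac12(a-1)(b-1)$ of semigroup gaps and propose to finish by ``inclusion-exclusion over multiples of $a$, of $b$, and the overlap''; that bookkeeping is not correct (for $(a,b)=(2,3)$ the set in question is $\{1,2,3,4,6\}$, which contains $1$ and omits $5$, so divisibility by $a$ or $b$ is not the relevant dichotomy), and your shift should be by $a+b$, not $ab$: one has $\ell(a,b,m)=0$ if and only if $m-a-b$ is negative or is a gap of the numerical semigroup generated by $a$ and $b$, giving $(a+b-1)+\tfrac12(a-1)(b-1)=\tfrac12(a+1)(b+1)-1$. Note also that the classical gap count is essentially equivalent to the statement being proved; the paper avoids citing it by a self-contained argument: for $1\leqslant m\leqslant ab$ the value $\ell(a,b,m)$ is the coefficient $c_m$ in $f(t)=(t^a+\cdots+t^{a(b-1)})(t^b+\cdots+t^{(a-1)b})$, and the symmetry $c_m=c_{2ab-m}$ together with $c_{ab}=0$ and $f(1)=(a-1)(b-1)$ shows that exactly $\tfrac12(a-1)(b-1)$ of the integers $1\leqslant m\leqslant ab$ have $\ell=1$, whence $\tfrac12(a+1)(b+1)-1$ have $\ell=0$. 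You need either to carry out the counts along these lines or to correct the reduction to Sylvester's theorem as indicated.
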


\begin{proof}(1) The values $\ell(a,b,m)$ and $\ell(a,b,m+ab)$ are the numbers of
integers in the open intervals $(cm/a, dm/b)$ and $(cm/a + bc, dm/b +
ad)$, respectively. Since $bc$ and $ad$ are consecutive integers,
$\ell(a,b,m+ab) = \ell(a,b,m)+1$ as stated.

(2) The length of the open interval $(cm/a, dm/b)$ is equal to $m/ab$
which is strictly larger than $r$ and less than or equal to
$r+1$. Therefore, the number $\ell(a,b,m)$ of integers in this
interval is either $r$ or $r+1$. If $m$ is divisible by either $a$ or
$b$, then one or both end points of the interval are
integers. Therefore, in this case, it contains precisely $r$ integers. 

(3) By~(2), the positive integers $m$ that satisfies $\ell(a,b,m) = r$
are among the $2ab$ integers $(r-1)ab + 1 \leqslant m \leqslant
(r+1)ab$. Moreover, on the first half (resp.~the second half), the
value of $\ell(a,b,m)$ is either $r-1$ or $r$ (resp.~either $r$ or
$r+1$). Finally, by~(1), the number of integers $(r-1)ab+1 \leqslant m
\leqslant rab$ with $\ell(a,b,m) = r-1$ is equal to the number of
integers $rab+1 \leqslant m \leqslant (r+1)ab$ with 
$\ell(a,b,m) = r$. Therefore, the total number of integers $m$ with
$\ell(a,b,m) = r$ is equal to $ab$ as stated.

(4) It follows from~(2) that all positive integers $m$ with
$\ell(a,b,m) = 0$ are among the $ab$ integers 
$1 \leqslant m \leqslant ab$. Moreover, on said $ab$ integers, the
value $\ell(a,b,m)$ is either $0$ or $1$ and is equal to the
coefficient $c_m$ in the polynomial
$$f(t) = \sum_{0 < m < 2ab} c_mt^m
= (t^a + t^{2a} + \dots + t^{a(b-1)})(t^b + t^{2b} + \dots +
t^{(a-1)b}).$$
The coefficients in this polynomial satisfy $c_m = c_{2ab - m}$ because
$f(t)t^{-2ab} = f(t^{-1})$. Since $c_{ab} = \ell(a,b,ab) = 0$, we find
by setting $t = 1$ that
$$\sum_{0 < m < ab}c_m = \frac{1}{2} \sum_{0 < m < 2ab} c_m =
\frac{1}{2}(a-1)(b-1).$$
But this is the number of integers
$1 \leqslant m \leqslant ab$ with $\ell(a,b,m) = 1$, and therefore, the number of
integers $1 \leqslant m \leqslant ab$ with $\ell(a,b,m) = 0$ is equal to
$$ab - \frac{1}{2}(a-1)(b-1) = \frac{1}{2}(a+1)(b+1) - 1$$
as stated.

(5) Let $s$ be a divisor in $m$ and let $t = m/s$. If $(i,j)$ is a
pair of positive integers with $s = ai + bj$, then $(it,jt)$ is a pair
of positive integers with $m = ait + bit$, and hence, $\ell(a,b,s)
\leqslant \ell(a,b,m)$ as stated.
\end{proof}

\begin{remark}\label{conductor}We let $f \colon A \to B$ be the
$k$-algebra homomorphism considered in the introduction. The conductor
ideal from $A$ to $B$ is defined to be the largest ideal $I \subset A$ such that
also $f(I) \subset B$ is an ideal. It is generated by $t^v$,
where $v$ is smallest with the property that $t^m \in f(A)$ for all $m
\geqslant v$. This, in turn, means that $v$ is smallest with the
property that every $m \geqslant v$ can be expressed as a linear
combination $m = ai + bj$ with $(i,j)$ a pair of non-negative
integers, or equivalent, that $v + a + b$ is smallest with the
property that every $m \geqslant v + a + b$ can be expressed as a
linear combination $m = ai + bj$ with $(i,j)$ a pair of positive
integers. Lemma~\ref{sylvester}~(2) shows, in particular, that 
$v + a + b = ab + 1$, and therefore, we conclude that 
$v = (a-1)(b-1)$ as was first noted by Sylvester~\cite{sylvester}.
\end{remark}

It follows from Lemma~\ref{sylvester}~(5) that the subsets
$$S(a,b,r) = \{ m \in \N \mid l(a,b,m) \leqslant r\} \subset \N$$
considered in the introduction are truncation sets. We record their
cardinalities.

\begin{cor}\label{sylvestercorollary}Let $r$ be a non-negative integer.
\begin{enumerate}
\item[{\rm (1)}] $\card (S(a,b,r)) = \frac{1}{2}(a+1)(b+1) - 1 + rab$.
\item[{\rm (2)}] $\card (S(a,b,r)/a) = (r+1)b$.
\item[{\rm (3)}] $\card (S(a,b,r)/b) = (r+1)a$.
\item[{\rm (4)}] $\card (S(a,b,r)/ab) = r+1$.
\end{enumerate}
\end{cor}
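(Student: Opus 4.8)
The plan is to derive all four cardinalities from Lemma~\ref{sylvester} by a counting argument. For part~(1), I would observe that $S(a,b,r)$ is the disjoint union of the set of $m$ with $\ell(a,b,m) = 0$ and the sets of $m$ with $\ell(a,b,m) = s$ for $1 \leqslant s \leqslant r$. By Lemma~\ref{sylvester}~(4) the first set has $\frac{1}{2}(a+1)(b+1) - 1$ elements, and by Lemma~\ref{sylvester}~(3) each of the remaining $r$ sets has exactly $ab$ elements, so the total is $\frac{1}{2}(a+1)(b+1) - 1 + rab$.

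For parts~(2)--(4), the idea is that $S(a,b,r)/e$, for $e$ a divisor of $ab$, consists of those positive integers $n$ with $\ell(a,b,ne) \leqslant r$, and I want to count these. The cleanest route is to use Lemma~\ref{sylvester}~(1), which gives $\ell(a,b,m + ab) = \ell(a,b,m) + 1$, to reduce to a single fundamental block of length $ab$. Concretely, for part~(4) note that $m = abn$ satisfies $\ell(a,b,abn) = \ell(a,b,ab) + (n-1) = n - 1$ by iterating~(1) and using $\ell(a,b,ab) = 0$; hence $abn \in S(a,b,r)$ if and only if $n - 1 \leqslant r$, i.e. $n \in \{1, \dots, r+1\}$, giving $\card(S(a,b,r)/ab) = r + 1$. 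For part~(2), I would count $n$ with $\ell(a,b,an) \leqslant r$: writing $n = bq + n_0$ with $1 \leqslant n_0 \leqslant b$, Lemma~\ref{sylvester}~(1) gives $\ell(a,b,an) = \ell(a,b,an_0) + q$, and since $an_0$ is divisible by $a$ and lies in the range $1 \leqslant an_0 \leqslant ab$, Lemma~\ref{sylvester}~(2) (with $r = 0$) forces $\ell(a,b,an_0) = 0$ for all such $n_0$. Therefore $\ell(a,b,an) = q$ and the condition $\ell(a,b,an) \leqslant r$ becomes $q \leqslant r$, i.e. $q \in \{0, \dots, r\}$ and $n_0 \in \{1, \dots, b\}$, yielding $(r+1)b$ values of $n$. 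Part~(3) is identical with the roles of $a$ and $b$ interchanged, giving $(r+1)a$.

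The only subtlety, which I expect to be the main point requiring care, is the interplay between the ``positive integers'' convention implicit in $\ell(a,b,m)$ and the off-by-one bookkeeping when reducing modulo $ab$ (or $a$, or $b$): one must be careful that $m = 0$ is not a positive integer, so the residue $n_0$ should be taken in $\{1, \dots, b\}$ rather than $\{0, \dots, b-1\}$, and similarly that $\ell(a,b,ab) = 0$ rather than some other value. Once the representatives are chosen in the half-open-looking ranges $1 \leqslant an_0 \leqslant ab$ etc., Lemma~\ref{sylvester}~(2) applies verbatim because every such $an_0$ (resp.~$bn_0$, resp.~$abn_0$ with $n_0 = 1$) is divisible by $a$ or $b$. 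As a consistency check, one can verify that $\card(S(a,b,0)) = \frac{1}{2}(a+1)(b+1) - 1$ agrees with Lemma~\ref{sylvester}~(4), that $\card(S(a,b,0)/a) = b$, $\card(S(a,b,0)/b) = a$, and $\card(S(a,b,0)/ab) = 1$, the last reflecting that $ab$ itself is the only multiple of $ab$ with $\ell(a,b,m) = 0$.
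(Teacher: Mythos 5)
Your proof is correct and takes essentially the same route as the paper: part~(1) is the identical decomposition by the value of $\ell$, and parts~(2)--(4) amount to counting the multiples of $a$, $b$, and $ab$ that lie in $S(a,b,r)$, which the paper does by applying Lemma~\ref{sylvester}~(2) in each block of length $ab$ while you propagate from the first block using Lemma~\ref{sylvester}~(1). The difference is only bookkeeping, not substance.
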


\begin{proof}The statement~(1) follows from
Lemma~\ref{sylvester},~(3)--(4). To prove~(2), we note that
multiplication by $a$ defines a bijection of $S(a,b,r)/a$ onto 
$S(a,b,r) \cap a\N$. Lemma~\ref{sylvester}~(2) shows immediately
that the latter set has cardinality $(r+1)b$, which proves~(2). The
proofs of~(3) and~(4) are analogous.
\end{proof}

\begin{cor}Let $r$ be a non-negative integer, let $S = S(a,b,r)$, and
let $k$ be a commutative ring. The $\mathbb{W}(k)$-module
$\mathbb{W}_S(k)/(V_a\mathbb{W}_{S/a}(k) + V_b\mathbb{W}_{S/b}(k))$
has finite length $\frac{1}{2}(2r+1)(a-1)(b-1)$.
\end{cor}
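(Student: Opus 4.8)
The plan is to reduce the length computation over $\mathbb{W}(k)$ to a count of orbits of the multiplication-by-$p$ action on $S$ together with a count of those orbits that survive in the quotient, using the $p$-typical decomposition~\eqref{drwdecomposition} in degree $q=0$. First I would observe that, since the statement concerns the $\mathbb{W}(k)$-module structure, it suffices to treat $k$ an arbitrary $\Z_{(p)}$-algebra — but in fact the length as a $\mathbb{W}(k)$-module of $\mathbb{W}_S(k)/(V_a\mathbb{W}_{S/a}(k)+V_b\mathbb{W}_{S/b}(k))$ does not depend on $k$, so I would further reduce to $k=\Fp$, where $\mathbb{W}_{\langle p^{v-1}\rangle}(\Fp)=W_v(\Fp)=\Z/p^v$. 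Applying $\gamma$ in degree $0$ identifies $\mathbb{W}_S(\Fp)$ with $\prod_{e\in S'}W_{v_p(S/e)}(\Fp)$, where for a $p$-typical truncation set $T=\langle p^{v-1}\rangle$ I write its ``length'' as $v$, i.e.\ $v_p(S/e)=\card((S/e)\cap P)$. Thus $\mathbb{W}_S(\Fp)$ has $\mathbb{W}(\Fp)=\Z_p$-length equal to $\sum_{e\in S'}\card((S/e)\cap P)=\card(S)$, recovering Corollary~\ref{sylvestercorollary}~(1).

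Next I would analyze the submodule $V_a\mathbb{W}_{S/a}(\Fp)+V_b\mathbb{W}_{S/b}(\Fp)$ through the same decomposition. Writing $a=p^{\alpha}a'$ and $b=p^{\beta}b'$ with $a',b'$ prime to $p$, the $p$-typical description of $V_a$ recalled in the excerpt shows that $V_a^{\gamma}$ sends the factor of $\mathbb{W}_{S/a}(\Fp)$ indexed by $e\in(S/a)'$ into the factor of $\mathbb{W}_S(\Fp)$ indexed by $a'e\in S'$ via $a'\,V_{p^{\alpha}}$; over $\Fp$ this is $V_{p^{\alpha}}$ up to a unit (here one uses $p\nmid a$, i.e.\ $\alpha=0$, in the hypothesis of the ambient Theorem~\ref{main}, though the argument works in general). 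The key point is that on each $p$-typical factor, $V_{p^{\alpha}}\colon W_{v-\alpha}(\Fp)\to W_v(\Fp)$ is (up to unit) multiplication by $p^{\alpha}$, so its image is $p^{\alpha}W_v=p^{\alpha}\Z/p^v$, and the cokernel $W_v(\Fp)/V_{p^{\alpha}}W_{v-\alpha}(\Fp)$ has length $\min(v,\alpha)$. One then checks that the images of $V_a^{\gamma}$ and $V_b^{\gamma}$ land in disjoint sets of factors precisely when no $e$ lies in both $a'\N$ and $b'\N$ — but an $e\in S'$ can be divisible by $a'b'$, and on such factors the two images interact. The length of the sum on such a factor is $\max$ of the two individual ``depths'', so that the contribution of the factor indexed by $e$ to the cokernel of the inclusion is $\min(v_p(S/e),\ \text{something})$; I would package this as: the factor indexed by $e\in S'$ contributes $0$ to $\mathbb{W}_S(\Fp)/(V_a\cdots+V_b\cdots)$ precisely when it is hit surjectively, and otherwise contributes its full length. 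Carrying this through, the length of the quotient equals
\[
\card(S) - \card(S/a) - \card(S/b) + \card(S/ab),
\]
the last term being an inclusion–exclusion correction coming from factors divisible by both $a'$ and $b'$ where the two Verschiebung images overlap and must not be double-counted.

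Finally, plugging in the cardinalities from Corollary~\ref{sylvestercorollary} with $S=S(a,b,r)$:
\[
\Bigl(\tfrac12(a+1)(b+1)-1+rab\Bigr)-(r+1)b-(r+1)a+(r+1)
= \tfrac12(2r+1)(a-1)(b-1),
\]
as a short algebraic simplification confirms (expand $\tfrac12(a+1)(b+1)-1=\tfrac12(a-1)(b-1)+a+b-1$ and collect the terms proportional to $r+1$ against $rab$). I expect the main obstacle to be the careful bookkeeping of the overlap on factors indexed by $e\in S'$ divisible by $a'b'$: one must verify that $V_a\mathbb{W}_{S/a}(\Fp)+V_b\mathbb{W}_{S/b}(\Fp)$, restricted to such a factor $W_v(\Fp)=\Z/p^v$, equals $p^{\min(\alpha,\beta)}\Z/p^v$ — which follows from $\gcd(a,b)=1$ forcing $\min(\alpha,\beta)=0$ (at least one of $a,b$ is prime to $p$), so in fact on every factor at least one of the two Verschiebung maps is (up to unit) the identity, and the sum is either all of $W_v(\Fp)$ or has the expected cokernel; this is exactly why the crude inclusion–exclusion count is valid and no higher correction terms appear. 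Once this local analysis at each $e$ is pinned down, the global count is immediate from Corollary~\ref{sylvestercorollary}.
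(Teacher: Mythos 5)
There is a genuine gap, and it occurs at the very first step. The corollary is stated for an \emph{arbitrary} commutative ring $k$, but your argument immediately reduces to $k=\Fp$ by asserting that the length of $\mathbb{W}_S(k)/(V_a\mathbb{W}_{S/a}(k)+V_b\mathbb{W}_{S/b}(k))$ over $\mathbb{W}(k)$ ``does not depend on $k$''. That assertion is not justified and is essentially the content of the statement; moreover the $p$-typical decomposition~(\ref{drwdecomposition}) on which your whole computation rests is only available when $k$ is a $\Z_{(p)}$-algebra, so for $k=\Z$, a $\mathbb{Q}$-algebra, or any ring with no distinguished prime there is no decomposition to apply. The paper's proof avoids any choice of $p$: in Witt coordinates $V_n$ is pure reindexing, so $V_a\mathbb{W}_{S/a}(k)+V_b\mathbb{W}_{S/b}(k)$ is exactly the kernel of the restriction map $R_T^S$ to the sub-truncation set $T=\{m\in S\mid a\nmid m,\ b\nmid m\}$; hence the quotient is $\mathbb{W}_T(k)$, whose length is $\card(T)$ for every commutative $k$, and $\card(T)=\card(S)-\card(S/a)-\card(S/b)+\card(S/ab)$ is then evaluated by Corollary~\ref{sylvestercorollary}. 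If you want to argue $p$-typically you must first prove this identification of the kernel (or the independence of $k$) by some such direct argument, at which point the $\Fp$-computation is no longer needed.

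Second, even granting $k=\Fp$, your local bookkeeping is incorrect when $p$ divides $a$ (or $b$). Write $a=p^{\alpha}a'$ with $p\nmid a'$ and $\alpha\geqslant 1$. A factor of $\prod_{e\in S'}W_{v(e)}(\Fp)$ indexed by an $e$ that is divisible by $a'$ but not by $b$ is hit only by $V_a$, whose image there is $p^{\alpha}\Z/p^{v(e)}$; its contribution to the quotient is $\min(\alpha,v(e))$, which is neither $0$ nor the full length $v(e)$, contradicting your dichotomy ``contributes $0$ iff hit surjectively, otherwise its full length''. Your closing claim that ``on every factor at least one of the two Verschiebung maps is (up to unit) the identity'' is only true for factors divisible by both $a'$ and $b$; applied as stated, your count would give $\card(S)-\card(S/b)$ when $p\mid a$, not the asserted inclusion–exclusion formula. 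The formula is nevertheless correct, because summing the genuine local contributions ($v(e)$, $\min(\alpha,v(e))$, or $0$, according to divisibility of $e$ by $a'$ and $b$) reproduces, orbit by orbit, the count of elements of $S$ divisible by neither $a$ nor $b$ — which is precisely the set $T$ the paper restricts to. Your final arithmetic simplification to $\tfrac12(2r+1)(a-1)(b-1)$ is fine.
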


\begin{proof}In general, if $T$ is a finite truncation set, then 
$$\operatorname{length}_{\mathbb{W}(k)}(\mathbb{W}_T(k)) = \card(T).$$
Here $\mathbb{W}_T(k)$ is viewed as a $\mathbb{W}(k)$-module via
$R_T^{\N} \colon \mathbb{W}(k) \to \mathbb{W}_T(k)$. In the case at
hand, if $T \subset S$ is the sub-truncation set of elements $m \in S$
that are not divisible by either $a$ or $b$, then the restriction map
$R_T^S$ induces an isomorphism
$$\xymatrix{
{ \mathbb{W}_S(k) / (V_a\mathbb{W}_{S/a}(k) + V_b\mathbb{W}_{S/b}(k))
} \ar[r]^-{\sim} &
{ \mathbb{W}_T(k), } \cr
}$$
and by Corollary~\ref{sylvestercorollary},
$$\card(T) = \card(S) - \card(S/a) - \card(S/b) + \card(S/ab)$$
is equal to $\frac{1}{2}(2r+1)(a-1)(b-1)$ as stated.
\end{proof}

\begin{example}To illustrate the above, we let $k$ be a regular
$\Fp$-algebra and use Theorem~\ref{main} to evaluate the groups
$K_q(A,\mathfrak{a})$ in low degrees for $(a,b) = (2,3)$. Our results
agree with and extend earlier results of 
Krusemeyer~\cite[Proposition~12.1]{krusemeyer}. We proceed to evaluate
the long exact sequence in Theorem~\ref{main} in low degrees,
beginning with the case $p \neq 2$. Since 
$S(2,3,0) = \{1,2,3,4,6\}$, the sequence begins
$$\begin{aligned}
{} & \xymatrix{
{ K_2(A,\mathfrak{a}) } \ar[r]^-{\partial} &
{ \mathbb{W}_{\{1\}}\Omega_k^1 } \ar[r]^-{V_3} &
{ \mathbb{W}_{\{1,3\}}\Omega_k^1 } \ar[r]^-{\varepsilon} &
{ K_1(A,\mathfrak{a}) } \cr
} \cr
{} & \xymatrix{
{ \phantom{K_1(A,\mathfrak{a})} } \ar[r]^-{\partial} &
{ \mathbb{W}_{\{1\}}\Omega_k^0 } \ar[r]^-{V_3} &
{ \mathbb{W}_{\{1,3\}}\Omega_k^0 } \ar[r]^-{\varepsilon} &
{ K_0(A,\mathfrak{a}) } \ar[r] &
{ 0. } \cr
} \cr
\end{aligned}$$
In the bottom line, the map $V_3$ is injective with cokernel
$$\xymatrix{
{ \mathbb{W}_{\{1,3\}}\Omega_k^0 } \ar[r]^-{R_{\{1\}}^{\{1,3\}}} &
{ \mathbb{W}_{\{1\}}\Omega_k^0 = k. } \cr
}$$
If also $p \neq 3$, then the map $V_3$ in the top line is injective
with the canonical retraction $\frac{1}{3}F_3$ and with cokernel the
restriction map
$$\xymatrix{
{ \mathbb{W}_{\{1,3\}}\Omega_k^1 }
\ar[r]^-{R_{\{1\}}^{\{1,3\}}} &
{ \mathbb{W}_{\{1\}}\Omega_k^1 = \Omega_k^1. } \cr
}$$
Indeed, in general, the kernel of the restriction map is
$$V_3\mathbb{W}_{\{1\}}\Omega_k^1 + dV_3\mathbb{W}_{\{1\}}\Omega_k^0
\subset \mathbb{W}_{\{1,3\}}\Omega_k^1,$$
and since $p \neq 3$, we have $dV_3 = \frac{1}{3}V_3d$. However, if $p
= 3$, then the map $V_3$ in the top line of the long exact sequence
above need not be injective and its cokernel may be larger than
$\Omega_k^1$.

We next assume that $p \neq 3$. Interchanging the r\^{o}le of $a$ and
$b$, the long exact sequence in Theorem~\ref{main} now begins
$$\begin{aligned}
{} & \xymatrix{
{ K_2(A,\mathfrak{a}) } \ar[r]^-{\partial} &
{ \mathbb{W}_{\{1,2\}}\Omega_k^1 } \ar[r]^-{V_2} &
{ \mathbb{W}_{\{1,2,4\}}\Omega_k^1 } \ar[r]^-{\varepsilon} &
{ K_1(A,\mathfrak{a}) } \cr
} \cr
{} & \xymatrix{
{ \phantom{K_1(A,\mathfrak{a})} } \ar[r]^-{\partial} &
{ \mathbb{W}_{\{1,2\}}\Omega_k^0 } \ar[r]^-{V_2} &
{ \mathbb{W}_{\{1,2,4\}}\Omega_k^0 } \ar[r]^-{\varepsilon} &
{ K_0(A,\mathfrak{a}) } \ar[r] &
{ 0. } \cr
} \cr
\end{aligned}$$
The map $V_2$ is the bottom line is injective with cokernel
$$\xymatrix{
{ \mathbb{W}_{\{1,2,4\}}\Omega_k^0 } \ar[r]^-{R_{\{1\}}^{\{1,2,4\}}} &
{ \mathbb{W}_{\{1\}}\Omega_k^0 = k. } \cr
}$$
Moreover, if also $p \neq 2$, then the map $V_2$ in the top line is
injective with the canonical retraction $\frac{1}{2}F_2$ and with cokernel
$$\xymatrix{
{ \mathbb{W}_{\{1,2,4\}}\Omega_k^1 }
\ar[r]^-{R_{\{1\}}^{\{1,2,4\}}} &
{ \mathbb{W}_{\{1\}}\Omega_k^1 = \Omega_k^1. } \cr
}$$
However, if $p = 2$, then the map $V_2$ in the top line is not
necessarily injective and its cokernel may be larger than
$\Omega_k^1$.
\end{example}

\section{The groups $\TR_{q-\lambda}^m(k)$}\label{trgroupssection}

The purpose of this section is to determine the structure of the
equivariant homotopy groups $\TR_{q-\lambda}^m(k)$ of the topological
Hochschild $\mathbb{T}$-spectrum $T(k)$. This is mostly a recollection
of results from~\cite{hm,hm2,h3}, but as a new contribution, we
exhibit an explicit generator of the cyclic $\mathbb{W}(\Fp)$-module
$\TR_{q-\lambda}^m(\Fp)$.

Let $k$ be a unital associative ring and let $T(k)$ be the associated
topological Hochschild $\mathbb{T}$-spectrum. For every positive
integer $m$, every integer $q$, and every finite dimensional
orthogonal $\mathbb{T}$-representation $\lambda$, we define 
$$\TR_{q - \lambda}^m(k) = [ S^q \wedge (\mathbb{T}/C_m)_+,
S^{\lambda} \wedge T(k) ]_{\mathbb{T}}$$
to be the abelian group of maps in the $\mathbb{T}$-stable homotopy
category. If $m = st$, then the canonical projection of
$\mathbb{T}/C_t$ onto $\mathbb{T}/C_m$ and the associated equivariant
transfer
$S^q \wedge (\mathbb{T}/C_m)_+ \to S^q \wedge (\mathbb{T}/C_t)_+$
induce group homomorphisms
$$\xymatrix{
{ \TR_{q-\lambda}^m(k) } \ar[r]^-{F_s} &
{ \TR_{q-\lambda}^t(k) } &
{ \TR_{q-\lambda}^t(k) } \ar[r]^-{V_s} &
{ \TR_{q-\lambda}^m(k) } \cr
}$$
called the $s$th Frobenius and the $s$th Verschiebung. Similarly,
Connes' operator
$$\xymatrix{
{ \TR_{q-\lambda}^m(k) } \ar[r]^-{d} &
{ \TR_{q+1-\lambda}^m(k) } \cr
}$$
is induced by a map $\delta \colon S^{q+1} \wedge (\mathbb{T}/C_m)_+
\to S^q \wedge (\mathbb{T}/C_m)_+$ in the $\mathbb{T}$-stable
category; see~\cite[Section~2.2]{hm3}. The restriction map
$$\xymatrix{
{ \TR_{q-\lambda}^m(k) } \ar[r]^-{R_s} &
{ \TR_{q-\lambda'}^t(k) } 
}$$
is more complicated and uses the cyclotomic structure of $T(k)$;
see~\cite[Section~2.3]{hm3}. Here, and below, we use the abbreviation
$$\lambda' = \rho_s^*(\lambda^{C_s}).$$
If the ring $k$ is commutative, then $T(k)$ inherits the structure of a
commutative ring $\mathbb{T}$-spectrum. Hence, for fixed $m$, the
groups $\TR_q^m(k)$ form an anti-symmetric graded ring, and the groups
$\TR_{q-\lambda}^m(k)$ form a graded (right) module over this graded
ring. It is proved in~\cite[Section~1]{h} that if $m = st$, then
$F_s,R_s \colon \TR_*^m(k) \to \TR_*^t(k)$ are graded ring
homomorphisms, and $V_s \colon \TR_{*-\lambda}^t(k) \to
\TR_{*-\lambda}^m(k)$ becomes a map of graded $\TR_*^m(k)$-modules
when the domain is viewed as a graded $\TR_*^m(k)$-module via $F_s
\colon \TR_*^m(k) \to \TR_*^t(k)$. Moreover,
by~\cite[Addendum~3.3]{hm}, there are canonical natural ring
isomorphisms $\eta_{\langle m \rangle} \colon \mathbb{W}_{\langle m
  \rangle}(k) \to \TR_0^m(k)$ which are compatible with the respective
restriction, Frobenius, and Verschiebung maps. In fact, the groups
$$E_{\langle m \rangle}^q = \TR_q^m(k)$$
and the various structure maps defined above define a Witt complex $E$
over $k$, unique up to unique isomophism; see~\cite[Section~1]{h} and
Remark~\ref{truncationsetremark}. Hence, by the universal property
of the big de~Rham-Witt complex, there is a unique map
\begin{equation}\label{uniquemap}
\xymatrix{
{ \mathbb{W}_{\langle m \rangle}\Omega_k^q }
\ar[r]^-{\eta_{\langle m \rangle}} &
{ \TR_q^m(k) } \cr
}
\end{equation}
of Witt complexes over $k$. This map is of a nature similar to the
canonical map from the Milnor $K$-theory of a field to the Quillen
$K$-theory of the field.

We now let $p$ be a prime number and assume that the ring $k$ is a
$\Z_{(p)}$-algebra, not necessarily commutative. In this case, the
groups and maps considered above admit the following $p$-typical
decomposition. We define
$$\TR_{q-\lambda}^v(k;p) = \TR_{q-\lambda}^{\smash{p^{v-1}}}(k)$$
and abbreviate $F = F_p$, $V = V_p$, and $R = R_p$. We write $m =
p^{v-1}m'$ with $m'$ not divisible by $p$. In this situation, we
recall from~\cite[Proposition~2.1]{agh} that the map
\begin{equation}\label{trdecomposition}
\xymatrix{
{ \TR_{q-\lambda}^m(k) } \ar[r]^-{\gamma} &
{ \prod_{e \in \langle m' \rangle} \TR_{q - \lambda'}^v(k;p) } \cr
}
\end{equation}
whose $e$th component is the composite map
$$\xymatrix{
{ \TR_{q-\lambda}^m(k) } \ar[r]^-{F_e} &
{ \TR_{q-\lambda}^{m/e}(k) } \ar[r]^-{R_h} &
{ \TR_{q-\lambda'}^{\smash{p^{v-1}}}(k) } \cr
}$$
is an isomorphism. Here $h = m'/e$. We caution that $\lambda'$ depends
on $e$. Suppose that $m = st$ and write $s = p^ws'$ and $t =
p^{v-w-1}t'$ with $s'$ and $t'$ not divisible by $p$. We also recall
from loc.~cit.~ that there are three commutative square diagrams
$$\xymatrix{
{ \TR_{q-\lambda}^m(k) } \ar[r]^-{\gamma} \ar@<-.7ex>[d]_-{F_s} &
{ \prod \TR_{q-\lambda'}^v(k;p) } \ar@<-.7ex>[d]_-{F_s^{\gamma}} &
{ \TR_{q-\lambda}^m(k) } \ar[r]^-{\gamma} \ar[d]^-{R_s} &
{ \prod \TR_{q-\lambda'}^v(k;p) } \ar[d]^-{R_s^{\gamma}} \cr
{ \TR_{q-\lambda}^t(k) } \ar[r]^-{\gamma} \ar@<-.7ex>[u]_-{V_s} &
{ \prod \TR_{q-\lambda'}^{v-w}(k;p) } 
\ar@<-.7ex>[u]_-{V_s^{\gamma}} &
{ \TR_{q-\lambda'}^t(k) } \ar[r]^-{\gamma} &
{ \prod \TR_{q-\lambda''}^{v-w}(k;p) } \cr
}$$
with $F_s^{\gamma}$, $V_s^{\gamma}$, and $R_s^{\gamma}$
defined as follows. The map $F_s^{\gamma}$ takes the factor indexed by
$e \in \langle m' \rangle \cap s'\,\N$ to the factor
indexed by $e/s' \in \langle t' \rangle$ by the map $F^v$ and
annihilates the factors indexed by $e \in \langle m' \rangle
\smallsetminus s'\,\N$. The map $V_s^{\gamma}$ takes the
factor indexed by $e \in \langle t' \rangle$ to the factor indexed
by $s'e \in \langle m' \rangle$ by the map $s'\,V^v$. Finally, the
map $R_s^{\gamma}$ takes the factor indexed $e \in \langle t' \rangle
\subset \langle m' \rangle$ to the factor indexed by $e \in
\langle t' \rangle$ by the map $R^v$ and annihilates the factors
indexed by $e \in \langle m' \rangle \smallsetminus \langle t'
\rangle$. 

Suppose that the $\Z_{(p)}$-algebra $k$ is commutative. We recall
from~(\ref{drwdecomposition}) that in this case the big de~Rham-Witt
groups admit a similar $p$-typical decomposition. In fact, since the
map~(\ref{uniquemap}) is a map of Witt complexes and since $\langle m
\rangle' = \langle m' \rangle$, the following diagram, where we
abbreviate $\eta_v = \eta_{\langle p^{v-1} \rangle}$, commutes.
$$\xymatrix{
{ \mathbb{W}_{\langle m \rangle}\Omega_k^q } \ar[r]^-{\gamma}
\ar[d]^-{\eta_{\langle m \rangle}} &
{ \prod_{e \in \langle m' \rangle} W_v\Omega_k^q } 
\ar[d]^-{\prod \eta_v} \cr
{ \TR_q^m(k) } \ar[r]^-{\gamma} &
{ \prod_{e \in \langle m' \rangle} \TR_q^v(k;p) } \cr
}$$
It follows that the $\mathbb{W}_{\langle m
  \rangle}(k)$-module $\TR_{q-\lambda}^m(k)$ is completely determined
by the collection of $W_v(k)$-modules $\TR_{q-\lambda'}^v(k;p)$ with
$e \in \langle m' \rangle$.

Given a $\mathbb{T}$-representation $\lambda$ and a positive integer
$v$, we write
$$\lambda^{(v)} = \rho_{p^v}^*(\lambda^{C_{p^v}}).$$
The following result is proved in~\cite[Theorem~11]{h3}. The canonical
map in the statement is induced by the map~(\ref{uniquemap}) and the
map of $\TR$-groups induced by the unit map $\iota \colon \Fp \to k$.

\begin{theorem}\label{trregular}Let $k$ be a regular $\Fp$-algebra and
let $\lambda$ be a finite dimensional complex
$\mathbb{T}$-representation. The canonical map
$$\xymatrix{
{ W_v\Omega_k^* \otimes_{W_v(\Fp)} \TR_{*-\lambda}^v(\Fp;p) }
\ar[r] &
{ \TR_{*-\lambda}^v(k;p) } \cr
}$$
factors through an isomorphism
$$\xymatrix{
{ \bigoplus_{r \geqslant 0} W_{v-w}\Omega_k^{q-2r} \otimes_{W_v(\Fp)}
  \TR_{2r-\lambda}^v(\Fp;p) } \ar[r] &
{ \TR_{q-\lambda}^v(k;p) } \cr
}$$
where $w = w(r,\lambda,v)$ is equal to $0$, if $\dim_{\R}(\lambda)
\leqslant 2r$; is equal to $w$, if there exists an integer
$1 \leqslant w < v$ such that $\dim_{\R}(\lambda^{(w)}) \leqslant 2r <
\dim_{\R}(\lambda^{(w-1)})$; and is equal to $v$, if $2r <
\dim_{\R}(\lambda^{(v-1)})$.
\end{theorem}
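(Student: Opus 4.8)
The plan is to reduce the statement to known structural results about the $p$-typical de~Rham--Witt complex of a regular $\Fp$-algebra, using the multiplicative action of $W_v\Omega_k^*$ on $\TR_{*-\lambda}^v(k;p)$ together with the filtration by the restriction maps. First I would record that, since $k$ is regular over $\Fp$, the results of~\cite{h3} identify the $\mathbb{T}$-spectrum $T(k)$ and its fixed points sufficiently well that $\TR_{*-\lambda}^v(k;p)$ is a module over the graded ring $W_v\Omega_k^* = \TR_*^v(k;p)$ in degree $0$ through the map~(\ref{uniquemap}), and that the unit $\iota\colon\Fp\to k$ makes $\TR_{*-\lambda}^v(\Fp;p)$ map to $\TR_{*-\lambda}^v(k;p)$ compatibly with this module structure; this gives the canonical map in the statement. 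The key geometric input is the cofibration sequence relating $\TR^v_{*-\lambda}$ to $\TR^{v-1}_{*-\lambda'}$ and to a ``homotopy orbit'' term indexed by the fixed-point data of $\lambda$, i.e.\ the fundamental cofibration sequence of~\cite{hm} expressing the restriction map $R\colon\TR^v\to\TR^{v-1}$ with fiber built from $T(k)_{hC_{p^{v-1}}}$ twisted by $\lambda$. The numbers $\dim_{\R}(\lambda^{(w)})$ enter precisely as the connectivity/degree shifts of these orbit terms, which is why the integer $w=w(r,\lambda,v)$ is defined by the inequalities $\dim_{\R}(\lambda^{(w)})\leqslant 2r<\dim_{\R}(\lambda^{(w-1)})$.

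The main steps, in order, are: (i) establish the case $\lambda=0$, where $\TR^v_{q}(k;p)=W_v\Omega_k^q$ by the isomorphism~(\ref{uniquemap}) combined with the fact, cited from~\cite{illusie, hm3}, that for regular $\Fp$-algebras this is an isomorphism; here every $r$ contributes with $w=0$ and $\TR_{2r}^v(\Fp;p)$ is $W_v(\Fp)$ in degree $0$ and $0$ otherwise, so the claimed direct sum collapses correctly. (ii) Run an induction on $v$: the base case $v=1$ is~\cite[Theorem~11]{h3} applied with $T(k)^{C_1}=T(k)$, where the only relevant term is the homotopy orbit term and one reads off the statement directly from the known $\mathbb{T}$-homotopy groups of $T(\Fp)$. (iii) For the inductive step, feed $\TR^{v-1}_{*-\lambda'}(k;p)$ through the fundamental cofibration sequence; by induction it is a sum of terms $W_{(v-1)-w'}\Omega_k^{q-2r}\otimes\TR^{v-1}_{2r-\lambda'}(\Fp;p)$, and one must check that after splicing in the orbit term the indexing recombines into the asserted formula over $\TR^v$. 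The orbit term contributes copies of $\Omega_k^{q-2r}$ (i.e.\ $W_1\Omega_k^{q-2r}$-worth) in exactly the range of $r$ where $\dim_\R(\lambda^{(v-1)})>2r\geqslant\dim_\R(\lambda^{(v-2)})$ or below, matching the $w=v$ and $w=v-1$ cases. (iv) Finally, verify that the resulting isomorphism is the one induced by the module action, i.e.\ that the splitting is natural and compatible with $R$, $F$, $V$ and $d$; this uses that all maps in sight are $\TR_*^v$-linear, by~\cite[Section~1]{h}.

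The step I expect to be the main obstacle is (iii): keeping precise track of how the degree shifts $\dim_\R(\lambda^{(w)})$ propagate through the cofibration sequence and recombine, so that the single integer $w(r,\lambda,v)$ genuinely captures the contribution. Concretely, one has to know that the associated graded of $\TR^v_{q-\lambda}(k;p)$ for the restriction filtration is $\bigoplus_w \gr^w W_v\Omega_k^{q-2r}\otimes\TR^v_{2r-\lambda}(\Fp;p)$ with the correct $w$ attached to each $r$, and then identify $\gr^w W_v\Omega_k^q$ via~\cite[Corollary~I.3.9]{illusie}. Since $k$ is regular, the relevant $\Tor$-groups and Cartier-operator calculations degenerate (the de~Rham--Witt complex of a regular $\Fp$-algebra is ``as simple as possible''), and the extension problems assembling $\gr^w$ back into $W_{v-w}\Omega_k^q$ are trivial because $V^w\colon W_{v-w}\Omega_k^q\to W_v\Omega_k^q$ is injective in this case. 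Granting that, the bookkeeping is the substance of the argument, and the rest is formal.
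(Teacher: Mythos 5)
The paper does not actually prove Theorem~\ref{trregular}; it quotes it from \cite[Theorem~11]{h3}, so your sketch has to be measured against that literature proof, whose overall shape (induction on $v$ through the fundamental cofibration sequence relating $\TR^v$, $\TR^{v-1}$ and a homotopy-orbit term, with N\'eron--Popescu reducing the regular case to the smooth case) you do identify correctly. But your step (i) is false as stated: for a regular $\Fp$-algebra the canonical map $W_v\Omega_k^q \to \TR_q^v(k;p)$ of~(\ref{uniquemap}) is a split monomorphism, not an isomorphism, and the $\lambda=0$ case of the theorem reads $\TR_q^v(k;p) \cong \bigoplus_{r\geqslant 0} W_v\Omega_k^{q-2r}$. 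Correspondingly, $\TR_{2r}^v(\Fp;p) \cong W_v(\Fp) = \Z/p^v$ for \emph{every} $r \geqslant 0$ (B\"okstedt periodicity; see \cite[Proposition~9.1]{hm}, or Proposition~\ref{trprimefield} with $\lambda = 0$, where $w=0$), not ``$W_v(\Fp)$ in degree $0$ and $0$ otherwise.'' Already $THH_*(\Fp) = \Fp[\sigma]$ with $\sigma$ in degree $2$, so your picture would collapse the sum over $r$ and contradicts the known answer even for $q=2$, $\lambda=0$; the sum over $r$ is precisely the manifestation of the $\sigma$-classes, and getting this wrong undermines the bookkeeping that the rest of your plan depends on.

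Two further problems. First, your base case (ii) cites \cite[Theorem~11]{h3}, which is exactly the theorem being proved (it is the source the paper cites for the full statement), so as written the induction has no independent starting point; the $v=1$ case should instead be deduced from the computation of $THH$ of smooth (hence regular) $\Fp$-algebras, $THH_*(k) \cong \Omega_k^* \otimes_k k[\sigma]$, together with the identification $\TR^1_{q-\lambda}(k;p) \cong THH_{q-\dim_{\R}\lambda}(k)$. Second, step (iii), which you yourself flag as the substance, is only asserted: the homotopy-orbit term does not contribute ``copies of $\Omega_k^{q-2r}$''; its homotopy has to be computed from the orbit spectral sequence, whose differentials for general regular $k$ are controlled by the $\Fp$-computation via the module structure, and the resulting graded pieces have the shape of \cite[Corollary~I.3.9]{illusie} (quotients by the groups $B$ and $Z$ of boundaries and cycles), not free modules of de~Rham forms. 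Likewise, injectivity of $V^w$ does not by itself settle the extension problem reassembling these graded pieces into $W_{v-w}\Omega_k^{q-2r}$; in the actual argument the extensions are identified by mapping in the de~Rham--Witt complex via~(\ref{uniquemap}) and using $W_v\Omega_k^*$-linearity. So the proposal points at the right tools, but it starts from a false base case, is circular where it invokes the theorem itself, and leaves the decisive inductive identification unproved.
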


It was proved in~\cite[Proposition~9.1]{hm} that the group
$\TR_{2r-\lambda}^v(\Fp;p)$ is cyclic of order $p^{v-w}$ with 
$w = w(r,\lambda,v)$ as in the statement. Below, we will give a
preferred generator of this group, and for this purpose, we first
recall the Tate spectrum and the Tate spectral sequence
following~\cite[Section~4]{hm4}.

Let $E$ be a free $\mathbb{T}$-CW-complex which, non-equivariantly, is
contractible; if $E'$ is another $\mathbb{T}$-CW-complex with this 
property, then there exists a unique $\mathbb{T}$-homotopy class of
$\mathbb{T}$-homotopy equivalences from $E$ to $E'$. We define
$\tilde{E}$ to be the pointed $\mathbb{T}$-CW-complex given by the
mapping cone of the pointed map $\pi \colon E_+ \to S^0$ that
collapses $E$ onto the non-basepoint in $S^0$. We note that the
definition of the mapping cone and of the cofibration sequence of
pointed $\mathbb{T}$-CW-complexes
$$\xymatrix{
{ E_+ } \ar[r]^-{\pi} &
{ S^0 } \ar[r]^-{\iota} &
{ \tilde{E} } \ar[r]^-{\delta} &
{ \Sigma E_+ } \cr
}$$
depend on choices and that we always use the choices made
in~\cite[Section~2]{hm4}. Now, the $(-q)$th Tate cohomology group of 
$C_{p^v}$ with coefficients in the $\mathbb{T}$-spectrum $T$ is
defined to be the equivariant homotopy group
$$\hat{\mathbb{H}}^{-q}(C_{p^v},T) = [S^q \wedge
(\mathbb{T}/C_{p^v})_+, \tilde{E} \wedge F(E_+, T)^c]_{\mathbb{T}},$$
where $X^c \to X$ is a $\mathbb{T}$-CW-replacement. The Tate
cohomology groups of $C_{p^v}$ with coefficients in $T$ are the
abutment of the Tate spectral sequence
$$E_{s,t}^2 = \hat{H}^{-s}(C_{p^v}, \pi_t(T)) \Rightarrow
\hat{\mathbb{H}}^{-s-t}(C_{p^v},T)$$
from the corresponding Tate cohomology groups of $C_{p^v}$ with
coefficients in the trivial $C_{p^u}$-modules $\pi_t(T) = [S^t
\wedge \mathbb{T}_+,T]_{\mathbb{T}}$. If $T$ is a ring
$\mathbb{T}$-spectrum, then the spectral sequence is multiplicative. 

The definition of the cyclotomic trace map given by Dundas and
McCarthy in~\cite{dundasmccarthy} gives a map in the stable homotopy 
category $\tr \colon K(\Fp) \to T(\Fp)^{\mathbb{T}}$. This map, in
turn, determines a map in the $\mathbb{T}$-stable homotopy category
that we also write 
$$\xymatrix{
{ K(\Fp) } \ar[r]^-{\tr} &
{ T(\Fp). } \cr
}$$
It follows from~\cite[Appendix]{gh} that this map is a map of ring
$\mathbb{T}$-spectra. 

\begin{lemma}\label{tategroups}Let $p$ be a prime number, let $v$ be a
positive integer, and let $\mu$ be a complex
$\mathbb{T}$-representation of real dimension $2d$.
\begin{enumerate}
\item[{\rm (1)}]The graded ring $\hat{H}^*(C_{p^v},\Z)$
is a Laurent polynomial algebra over $\Z/p^v\Z$ on a preferred
generator $t$ of degree $2$. 
\item[{\rm (2)}]The graded $\hat{H}^*(C_{p^v},\Z)$-module 
$\hat{H}^*(C_{p^v}, \pi_{2d}(S^{\mu} \wedge K(\Fp)))$ is free of
rank one generated by the product $[S^{\mu}]\iota$ of the
fundamental class $[S^{\mu}] \in \pi_{2d}(S^{\mu})$ and the
multiplicative unit element $\iota \in \pi_0( K(\Fp))$.
\item[{\rm (3)}]The edge homomorphism of the Tate spectral sequence
$$\xymatrix{
{ \hat{\mathbb{H}}^{-q}(C_{p^v},S^{\mu} \wedge K(\Fp)) }
\ar[r]^-{\varepsilon} &
{ \hat{H}^{2d-q}(C_{p^v}, \pi_{2d}(S^{\mu} \wedge K(\Fp))) } \cr
}$$
is an isomorphism for all integers $q$.
\item[{\rm (4)}]The map induced by the cyclotomic trace map
$$\xymatrix{
{ \hat{\mathbb{H}}^{-q}(C_{p^v}, S^{\mu} \wedge K(\Fp)) }
\ar[r]^-{\tr} &
{ \hat{\mathbb{H}}^{-q}(C_{p^v}, S^{\mu} \wedge T(\Fp)) } \cr
}$$
is an isomorphism for all integers $q$.
\end{enumerate}
\end{lemma}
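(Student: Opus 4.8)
The four statements are interlocking, and the natural order is (1), then (2), then (3), then (4); the last two are really consequences of the first two together with standard properties of the Tate construction.

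For (1), I would recall that for any finite cyclic group $C_{p^v}$ the Tate cohomology $\hat H^*(C_{p^v},\Z)$ is $2$-periodic, with $\hat H^0(C_{p^v},\Z) = \Z/p^v\Z$ generated by the class of the unit, and $\hat H^{-1}(C_{p^v},\Z) = 0$; periodicity is implemented by cup product with a generator $t \in \hat H^2(C_{p^v},\Z) \cong H^2(C_{p^v},\Z) \cong \operatorname{Hom}(C_{p^v},\mathbb{Q}/\Z)$. The word ``preferred'' must be made precise: I would fix $t$ to be the class corresponding to the character $C_{p^v} \hookrightarrow \mathbb{T}$ that sends $\zeta_{p^v} = \exp(2\pi i/p^v)$ to itself, i.e.\ the Euler class of the standard one-dimensional complex representation, compatibly with the fixed choice of $i$ made at the outset. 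With this choice $t$ is invertible in $\hat H^*$, giving the Laurent polynomial algebra structure $\hat H^*(C_{p^v},\Z) = (\Z/p^v\Z)[t^{\pm 1}]$. I would also note the naturality of $t$ under the restriction maps $\rho_s$ that appears implicitly later, so that this generator is the ``preferred'' one in the sense used in subsequent sections.

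For (2), the key point is that $\pi_{2d}(S^\mu \wedge K(\Fp)) \cong \pi_0(K(\Fp)) = \Z$ via multiplication by the fundamental class $[S^\mu]$, and as a $C_{p^v}$-module (with the trivial action, since we are taking homotopy groups after smashing with $\mathbb{T}_+$, hence over the whole circle) this is just $\Z$. So $\hat H^*(C_{p^v},\pi_{2d}(S^\mu\wedge K(\Fp))) \cong \hat H^*(C_{p^v},\Z)$, which by (1) is free of rank one over itself; tracing through the identification shows the generator is exactly $[S^\mu]\iota$. This is essentially bookkeeping once (1) is in hand.

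For (3), I would run the Tate spectral sequence $E^2_{s,t} = \hat H^{-s}(C_{p^v},\pi_t(S^\mu\wedge K(\Fp))) \Rightarrow \hat{\mathbb{H}}^{-s-t}(C_{p^v},S^\mu\wedge K(\Fp))$. Since $K(\Fp)$ has homotopy concentrated in even degrees (by Quillen's computation $K_*(\Fp)$: $K_0 = \Z$, $K_{2i-1}=\Z/(p^i-1)$, $K_{2i}=0$ for $i>0$), and smashing with $S^\mu$ shifts by $2d$, the groups $\pi_t(S^\mu\wedge K(\Fp))$ are concentrated in even $t$. The odd-degree groups $\Z/(p^i-1)$ are prime to $p$, hence have trivial Tate cohomology over the $p$-group $C_{p^v}$, so the only contributing row is $t = 2d$, where the coefficient module is $\Z$. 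Thus the spectral sequence collapses at $E^2$ with a single nonzero row, and the edge homomorphism $\varepsilon$ onto $\hat H^{2d-q}(C_{p^v},\pi_{2d}(S^\mu\wedge K(\Fp)))$ is an isomorphism for all $q$. I expect this to be the main obstacle — not because the collapse is hard, but because one must be careful that ``$\pi_t$'' here means the genuine equivariant homotopy groups appearing in the $E^2$-term (the groups $[S^t\wedge\mathbb{T}_+,-]_{\mathbb{T}}$, which are the underlying nonequivariant homotopy groups with trivial action), and that the vanishing of Tate cohomology of prime-to-$p$ modules over $C_{p^v}$ applies termwise; once that is pinned down the argument is immediate.

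For (4), I would use that by work of Hesselholt–Madsen (and the identification of $K(\Fp)^{\wedge}_p$ with the connective cover of the fixed points of $T(\Fp)$, or more directly the fact that the cyclotomic trace $K(\Fp)\to T(\Fp)$ is a $p$-adic equivalence after taking $\mathbb{T}$-fixed points, Quillen plus Bökstedt), the map $K(\Fp)\to T(\Fp)$ of ring $\mathbb{T}$-spectra induces an isomorphism on the relevant homotopy groups after smashing with $\tilde E$. Concretely: the Tate construction $\tilde E\wedge F(E_+,-)$ only depends on the $p$-completion, the fiber of $\tr$ on underlying spectra has homotopy groups that are either uniquely $p$-divisible or prime-to-$p$ torsion (again from Quillen's computation versus Bökstedt's $\pi_*T(\Fp)=\Fp[\sigma]$ with $|\sigma|=2$), and such groups have vanishing Tate cohomology over $C_{p^v}$, so the induced map on $\hat{\mathbb{H}}^{-q}(C_{p^v},S^\mu\wedge(-))$ is an isomorphism. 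Smashing with $S^\mu$ does not affect this argument. Assembling (1)–(4) completes the lemma.
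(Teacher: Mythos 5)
Your treatments of (1)--(3) are essentially the paper's: (1) is the standard periodicity statement (the paper fixes $t$ by an explicit cocycle from \cite{hm4}, you fix it by a character/Euler class, which is the same class up to the usual normalization); (2) is the observation that $\pi_{2d}(S^{\mu}\wedge K(\Fp))=\Z\cdot[S^{\mu}]\iota$ is a trivial module; and (3) is exactly the collapse argument from the fact that $K_t(\Fp)$ is finite of order prime to $p$ for $t>0$ (your phrase ``concentrated in even degrees'' is a slip --- the positive $K$-groups sit in odd degrees --- but your actual argument uses only their prime-to-$p$ order, which is correct).

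Part (4), however, has a genuine gap: the input you propose is false. The cyclotomic trace $K(\Fp)\to T(\Fp)$ is \emph{not} a $p$-adic equivalence of underlying spectra, and its fiber does not have homotopy groups that are uniquely $p$-divisible or prime-to-$p$ torsion. Indeed $K(\Fp)^{\wedge}_p\simeq H\Z_p$ is concentrated in degree $0$, while $\pi_*T(\Fp)=\Fp[\sigma]$ with $|\sigma|=2$, so the fiber $F$ of the trace has $\pi_0F\cong\Z$ and $\pi_{2i-1}F\cong\Fp\oplus\Z/(p^i-1)$ for every $i\geqslant 1$; these groups have non-vanishing Tate cohomology over $C_{p^v}$, so the termwise-vanishing argument collapses. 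Since the Tate construction $\tilde{E}\wedge F(E_+,-)$ depends only on the underlying spectrum with its naive action, no formal comparison of underlying homotopy types can yield (4); the fact that $F^{tC_{p^v}}$ nevertheless vanishes is a consequence of differentials, not of the $E^2$-term. This is precisely where the paper has to invoke the hard computation of \cite[Sections~4--5]{hm}: the Tate spectral sequence for $T(\Fp)$ has $\hat{E}^2=\Lambda_{\Z/p\Z}\{u\}\otimes S_{\Z/p\Z}\{t,t^{-1}\}\otimes S_{\Z/p\Z}\{\sigma\}$ with the non-zero differentials generated by $d^{2v+1}(u)\doteq t^{v+1}\sigma^v$, so that $\hat{E}^{\infty}=S_{\Z/p\Z}\{t,t^{-1}\}\otimes S_{\Z/p\Z}\{\sigma\}/(\sigma^v)$, and the extensions are maximally non-trivial; the Tate spectral sequence for $S^{\mu}\wedge T(\Fp)$ is a module over this one generated by the infinite cycle $\tr([S^{\mu}]\iota)$. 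This shows the target of the map in (4) is abstractly $\Z/p^v\Z$ in the same degrees as the source, and since the trace carries a generator to a generator, it is an isomorphism. Your fallback remark about $K(\Fp)^{\wedge}_p$ versus fixed points of $T(\Fp)$ concerns the genuine fixed-point (or $\TC$) level, not the Tate construction, and does not repair the argument without essentially redoing the same spectral sequence computation.
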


\begin{proof}The statement~(1) is proved in~\cite[Section~4]{hm4} with
the preferred generator $t$ defined in loc.~cit.~to be the class of
the cycle $y_0 \otimes Nx_1^*$, and~(2) follows immediately from
$\pi_{2d}(S^{\mu} \wedge K(\Fp)) = \Z \cdot [S^{\mu}]\iota$ being a
trivial $C_{p^{v}}$-module. For the proof of~(3), we recall that for
$t > 0$, the group $K_t(\Fp)$ is finite of order prime to $p$. It
follows that the (multiplicative) Tate spectral sequence 
$$\hat{E}_{s,t}^2 = \hat{H}^{-s}(C_{p^v},\pi_t(S^{\mu} \wedge K(\Fp)))
\Rightarrow \hat{\mathbb{H}}^{-s-t}(C_{p^v}, S^{\mu} \wedge K(\Fp))$$
collapses and that the edge homomorphism is an isomorphism as
stated. Finally, to prove~(4), we first consider the (multiplicative)
Tate spectral sequence for $T(\Fp)$, which takes the form
$$\hat{E}^2 = \Lambda_{\Z/p\Z}\{u\} \otimes
S_{\Z/p\Z}\{t, t^{-1}\} \otimes S_{\Z/p\Z}\{\sigma\} \Rightarrow
\hat{\mathbb{H}}^{-*}(C_{p^v}, T(\Fp))$$
with $\deg(u) = (-1,0)$, $\deg(t) = (-2,0)$, and $\deg(\sigma) =
(0,2)$; the precise definition of the generators $u$, $t$, and
$\sigma$ is given in~\cite[Section~4]{hm4}, and the 
spectral sequence was evaluated in~\cite[Section~5]{hm}. The result
is that the elements $t$ and $\sigma$ both are infinite cycles and
that the non-zero differentials are multiplicatively generated from
$d^{2v+1}(u)$ being equal to $t^{v+1}\sigma^v$ times a unit in
$\Fp$. It follows that
$$\hat{E}^{\infty} = S_{\Z/p\Z}\{ t, t^{-1}\} \otimes
S_{\Z/p\Z}\{\sigma\}/(\sigma^v).$$
Next, the Tate spectral sequence for $S^{\mu} \wedge T(\Fp)$ is a
module spectral sequence over the spectral sequence for $T(\Fp)$
generated by the infinite cycle 
$\smash{ \tr([S^{\mu}]\iota) \in \hat{E}_{0,2d}^r }$. Moreover, it was
proved in loc.~cit.~that the extensions in passing from
$E^{\infty}$ to the abutment are maximally non-trivial. We conclude
that the domain and target of the map~(4) are abstractly isomorphic
abelian groups, and since the map takes a generator of the domain 
to a generator of the target, it is an isomorphism.
\end{proof}

The following result is a refinement of~\cite[Proposition~9.1]{hm}. We
recall the function $w = w(r,\lambda,v)$ defined in the statement of
Theorem~\ref{trregular} and recall the abbreviation $\lambda' =
\rho_p^*(\lambda^{C_p})$. 

\begin{prop}\label{trprimefield}Let $p$ be a prime number, let $v$
be a positive integer, and let $\lambda$ be a finite dimensional
complex $\mathbb{T}$-representation. Let $\mu$ be a choice of a finite
dimensional complex $\mathbb{T}$-representation with
$\mu' = \lambda$.
\begin{enumerate}
\item[{\rm (1)}]The group $\TR_{q-\lambda}^v(\Fp;p)$ is trivial, if
$q$ is odd, and is cyclic of order $p^{v-w}$ with a preferred
generator $\sigma(r,\mu,v)$, if $q = 2r$ is even. Here $w =
w(r,\lambda,v)$. 
\item[{\rm (2)}]If $q = 2r$ is even and $q < \dim_{\R}(\lambda)$, then
the restriction map
$$\xymatrix{
{ \TR_{q-\lambda}^v(\Fp;p) } \ar[r]^-{R} &
{ \TR_{q-\lambda'}^{v-1}(\Fp;p) } \cr
}$$
is an isomorphism and takes $\sigma(r,\mu,v)$ to $\sigma(r,\mu',v-1)$.
\item[{\rm (3)}]If $q = 2r$ is even, then the Frobenius map
$$\xymatrix{
{ \TR_{q-\lambda}^v(\Fp;p) } \ar[r]^-{F} &
{ \TR_{q-\lambda}^{v-1}(\Fp;p) } \cr
}$$
is surjective and takes $\sigma(r,\mu,v)$ to $\sigma(r,\mu,v-1)$.
\end{enumerate}
\end{prop}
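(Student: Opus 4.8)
The plan is to deduce Proposition~\ref{trprimefield} from the Tate spectral sequence computation in Lemma~\ref{tategroups} together with the general machinery relating $\TR$-groups to Tate cohomology, following the pattern of~\cite[Section~9]{hm}. The starting observation is that $\TR_{q-\lambda}^v(\Fp;p)$ fits into a long exact sequence (the \emph{fundamental cofibration sequence}) involving $\TR$ of the next level, the homotopy orbit spectrum $\mathbb{H}_{\cdot}(C_{p^{v-1}}, S^{\lambda}\wedge T(\Fp))$, and the Tate construction $\hat{\mathbb{H}}^{-*}(C_{p^{v-1}}, S^{\lambda}\wedge T(\Fp))$; this is the diagram of cofibration sequences relating $T^{C_{p^{v-1}}}$, $T_{hC_{p^{v-1}}}$, $T^{hC_{p^{v-1}}}$, and $\hat{T}$. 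Since $\TR^1_{q-\lambda}(\Fp;p)=\pi_q(S^\lambda\wedge T(\Fp))$ is concentrated in even degrees and since, by Lemma~\ref{tategroups}~(4), the cyclotomic trace identifies the Tate cohomology of $S^{\mu}\wedge T(\Fp)$ with that of $S^\mu\wedge K(\Fp)$, which by~(3) is just $\hat{H}^{2d-*}(C_{p^v},\Z)\cdot[S^\mu]\iota$ and hence concentrated in even total degree, an induction on $v$ shows that all the relevant spectra have homotopy concentrated in even degrees, so all the connecting maps in the fundamental sequence split it into short exact sequences. This yields the vanishing in odd degrees in part~(1), and in even degrees an exact sequence computing $\TR^v_{2r-\lambda}(\Fp;p)$ from $\TR^{v-1}$ and the Tate group.

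Next I would pin down the order and the generator. The Tate group $\hat{\mathbb{H}}^{-(2r-\dim\lambda)}(C_{p^{v-1}}, S^{\lambda}\wedge T(\Fp))$ is, via Lemma~\ref{tategroups}, cyclic of order $p^{v-1}$ generated by an explicit element of the form $t^{?}\cdot \tr([S^\mu]\iota)$; and the homotopy-orbit contribution, being a first homology of $C_{p^{v-1}}$ with coefficients in the (even) homotopy of $S^\lambda\wedge T(\Fp)$, vanishes. Feeding this into the short exact sequence and inducting on $v$ gives that $\TR^v_{2r-\lambda}(\Fp;p)$ is cyclic, and the order is obtained by tracking where the differentials $d^{2v+1}(u)=t^{v+1}\sigma^v\cdot(\mathrm{unit})$ truncate the tower: the upshot is exactly the order $p^{v-w}$ with $w=w(r,\lambda,v)$, matching~\cite[Proposition~9.1]{hm}. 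The \emph{preferred} generator $\sigma(r,\mu,v)$ is then \emph{defined} to be the image under the canonical map from Tate cohomology (or the section coming from the collapse in Lemma~\ref{tategroups}~(3)) of the distinguished class $t^{(r-d)}\,[S^\mu]\iota$, transported along the cyclotomic trace; one checks this is well defined and independent of the auxiliary choices in building $E$ using the uniqueness clause recalled just before Lemma~\ref{tategroups}.

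For parts~(2) and~(3) the point is naturality. The restriction map $R$ and the Frobenius $F$ are compatible with the fundamental cofibration sequence and with the edge homomorphism of the Tate spectral sequence; this is precisely the content recalled in~\cite[Section~2.3]{hm3} and~\cite{hm4}. Under the identification of $\TR^v_{2r-\lambda}(\Fp;p)$ with a subquotient of $\hat{H}^*(C_{p^{v-1}},\Z)\cdot\tr([S^\mu]\iota)$, the restriction map corresponds to the canonical projection $\hat H^*(C_{p^{v-1}},\Z)\to \hat H^*(C_{p^{v-2}},\Z)$ (reduction of $\Z/p^{v-1}\to\Z/p^{v-2}$, which is an isomorphism on the relevant graded piece as long as we have not yet hit the truncation, i.e.\ as long as $q<\dim_\R(\lambda)$), and it sends the preferred generator $t^{r-d}[S^\mu]\iota$ to $t^{r-d'}[S^{\mu'}]\iota$, which is exactly $\sigma(r,\mu',v-1)$; and the Frobenius corresponds to the transfer, which on Tate cohomology of cyclic groups is the surjection multiplying by a unit and dividing the index, sending the generator to $\sigma(r,\mu,v-1)$. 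The main obstacle is bookkeeping: one must keep careful track of the real dimensions $\dim_\R(\lambda^{(w)})$ and the precise degree shifts so that the value of $w(r,\lambda,v)$ under restriction ($\lambda\mapsto\lambda'$, $v\mapsto v-1$) and the truncation behavior of the Tate tower match on the nose, and one must verify that the preferred generator is genuinely independent of the choice of $\mu$ with $\mu'=\lambda$ and of the contractible free $\mathbb{T}$-CW-complex $E$; everything else is a formal consequence of Lemma~\ref{tategroups} and the standard $\TR$-formalism.
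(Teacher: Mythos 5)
Your strategy in the range $q \geqslant \dim_{\R}(\lambda)$ is in substance the paper's: there the comparison map $\hat{\Gamma}_v$ into the Tate construction is an isomorphism by \cite[Addendum~9.1]{hm}, Lemma~\ref{tategroups} identifies the target as cyclic of order $p^v$, and the preferred generator is the class corresponding to $t^{d-r}[S^{\mu}]\iota$ under the trace and edge isomorphisms. But two of your identifications are wrong. First, the Frobenius is not the transfer: $F$ is induced by the inclusion of fixed points, and on Tate cohomology it is the map induced by the subgroup inclusion $C_{p^{v-1}} \subset C_{p^v}$, i.e.\ the surjection $\Z/p^v \to \Z/p^{v-1}$ taking $t^{d-r}[S^{\mu}]\iota$ to $t^{d-r}[S^{\mu}]\iota$; the transfer corresponds to $V$ and in any case goes in the wrong direction. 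You land on the correct conclusion, but via the wrong map. Second, and more seriously, your treatment of part~(2) and of the order $p^{v-w}$ in degrees $q < \dim_{\R}(\lambda)$ does not work: in exactly that range $\hat{\Gamma}_v$ fails to be an isomorphism --- it is not even injective (Remark~\ref{gammahatremark}) --- so $\TR_{q-\lambda}^v(\Fp;p)$ cannot be read off as a graded piece of $\hat{H}^*(C_{p^v},\Z)\cdot\tr([S^{\mu}]\iota)$, and your assertion that $R$ corresponds to the reduction $\Z/p^{v-1} \to \Z/p^{v-2}$ ``which is an isomorphism on the relevant graded piece'' is self-contradictory, since that reduction is never injective. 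The input actually required here is \cite[Theorem~2.2]{hm}: if $\dim_{\R}(\lambda^{(w)}) \leqslant q < \dim_{\R}(\lambda^{(w-1)})$, then $R^w \colon \TR_{q-\lambda}^v(\Fp;p) \to \TR_{q-\lambda^{(w)}}^{v-w}(\Fp;p)$ is an isomorphism; this is a substantive consequence of the cyclotomic structure, not a formal feature of the Tate spectral sequence. The paper defines $\sigma(r,\mu,v)$ in this range as the $R^w$-preimage of $\sigma(r,\mu^{(w)},v-w)$, so that (2) holds essentially by construction and (3) follows from $FR^w = R^wF$ together with the case already treated; there is also the residual case $q < \dim_{\R}(\lambda^{(v-1)})$, handled by connectivity of $T(\Fp)$, which your sketch does not address.

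Your parity-and-splitting step is likewise unjustified. The homotopy-orbit term in the norm cofibration sequence is not ``a first homology'': its spectral sequence has $E^2_{s,t} = H_s(C_{p^{v-1}}; \pi_t(S^{\lambda} \wedge T(\Fp)))$, which is $\Fp$ for every $s \geqslant 0$ and every even $t \geqslant \dim_{\R}(\lambda)$, so classes in odd total degree abound at $E^2$ and disappear only after the differential analysis that is the actual content of \cite[Sections~5--9]{hm}. Using ``everything is concentrated in even degrees'' as the inductive hypothesis that produces the short exact sequences is therefore circular as written. The clean route, and the one the paper takes, is to quote \cite[Proposition~9.1, Addendum~9.1, Theorem~2.2]{hm} for the group structure and for the two isomorphisms ($\hat{\Gamma}_v$ above the dimension of $\lambda$, $R^w$ below it), and to spend the proof on the genuinely new content: the construction of the preferred generator via Lemma~\ref{tategroups} and the verification, by the two commuting squares with $F$, that it is compatible with $R$ and $F$ as asserted.
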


\begin{proof}We first consider the case $\dim_{\R}(\lambda) \leqslant q$.
We recall that, in this case, it is proved in~\cite[Addendum~9.1]{hm}
that the map
$$\xymatrix{
{ \TR_{q-\lambda}^v(\Fp;p) } \ar[r]^-{\hat{\Gamma}_v} &
{ \hat{\mathbb{H}}^{-q}(C_{p^v},S^{\mu} \wedge T(\Fp)) } \cr
}$$
is an isomorphism. Moreover, Lemma~\ref{tategroups} shows that
the common group is zero, if $q$ is odd, and a cyclic group of order
$p^v$, if $q = 2r$ is even. To define the preferred generator
$\sigma(r,\mu,v)$, we consider (the left-hand column of) the
commutative diagram
$$\xymatrix{
{ \TR_{q-\lambda}^v(\Fp;p) } \ar[d]^-{\hat{\Gamma}_v} \ar[r]^-{F} &
{ \TR_{q-\lambda}^{v-1}(\Fp;p) } \ar[d]^-{\hat{\Gamma}_{v-1}} \cr
{ \hat{\mathbb{H}}^{-q}(C_{p^v}, S^{\mu} \wedge T(\Fp)) } \ar[r]^-{F} &
{ \hat{\mathbb{H}}^{-q}(C_{p^{v-1}}, S^{\mu} \wedge T(\Fp)) } \cr
{ \hat{\mathbb{H}}^{-q}(C_{p^v}, S^{\mu} \wedge K(\Fp)) } \ar[r]^-{F}
\ar[u]_-{\tr} \ar[d]^-{\varepsilon} &
{ \hat{\mathbb{H}}^{-q}(C_{p^{v-1}}, S^{\mu} \wedge K(\Fp)) }
\ar[u]_-{\tr} \ar[d]^-{\varepsilon} \cr
{ \hat{H}^{2d-q}(C_{p^v}, \pi_{2d}(S^{\mu} \wedge K(\Fp))) }
\ar[r]^-{F} &
{ \hat{H}^{2d-q}(C_{p^{v-1}}, \pi_{2d}(S^{\mu} \wedge K(\Fp))), } \cr
}$$
in which the vertical maps are isomorphisms and $2d =
\dim_{\R}(\mu)$. We define the preferred generator $\sigma(r,\mu,v)$
of the top left-hand group to be the unique class that corresponds
under the isomorphisms in the left-hand column to the preferred
generator $t^{d-r}[S^{\mu}]\iota$ of the lower left-hand group. The
lower horizontal map $F$ is the map of Tate cohomology groups induced
by the canonical inclusion of $C_{p^{v-1}}$ in  $C_{p^v}$. One readily
verifies that it is surjective and that it maps the generator
$t^{d-r}[S^{\mu}]\iota$ in the domain to the generator
$t^{d-r}[S^{\mu}]\iota$ in the target. It follows that the top 
horizontal map $F$ is surjective and takes the generator
$\sigma(r,\mu,v)$ to the generator $\sigma(r,\mu,v-1)$ as
stated. This proves the proposition in the case $\dim_{\R}(\lambda)
\leqslant q$.

Suppose next that $\dim_{\R}(\lambda^{(w)}) \leqslant q <
\dim_{\R}(\lambda^{(w-1)})$ with $1 \leqslant w < v$. In this
case, it follows from~\cite[Theorem~2.2]{hm} that the restriction map
$$\xymatrix{
{ \TR_{q-\lambda}^v(\Fp;p) } \ar[r]^-{R^w} &
{ \TR_{q-\lambda^{(w)}}^{v-w}(\Fp;p) } \cr
}$$
is an isomorphism, and the target was determined above. We conclude
that the domain is zero, if $q$ is odd, and a cyclic group of order
$p^{v-w}$, if $q = 2r$ is even. In the latter case, we define the
generator $\sigma(r,\mu,v)$ of the domain to be the unique class that
is mapped to generator $\sigma(r,\mu^{(w)},v-w)$ of the target. This
proves~(1) and~(2), and~(3) follows from the commutativity of the
diagram
$$\xymatrix{
{ \TR_{q-\lambda}^v(\Fp;p) } \ar[d]^-{R^w} \ar[r]^-{F} &
{ \TR_{q-\lambda}^{v-1}(\Fp;p) } \ar[d]^-{R^w} \cr
{ \TR_{q-\lambda^{(w)}}^{v-w}(\Fp;p) } \ar[r]^-{F} &
{ \TR_{q-\lambda^{(w)}}^{v-w-1}(\Fp;p) } \cr
}$$
and from what was proved above.

Finally, if $q < \dim_{\R}(\lambda^{(v-1)})$,
then~\cite[Theorem~2.2]{hm} shows similarly that the group
$\TR_{q-\lambda}^v(\Fp;p)$ is isomorphic to the group
$\TR_{q-\lambda^{(v-1)}}^1(\Fp;p)$. But this group is zero, since
$T(\Fp)$ is connective.
\end{proof}

\begin{remark}\label{gammahatremark}In the proof of
Proposition~\ref{trprimefield}, we used that the map
$$\xymatrix{
{ \TR_{q-\lambda}^v(\Fp;p) } \ar[r]^-{\hat{\Gamma}_v} &
{ \hat{\mathbb{H}}^{-q}(C_{p^v}, S^{\mu} \wedge T(\Fp)) } \cr
}$$
is an isomorphism if $\dim_{\R}(\lambda) \leqslant q$. For general
$q$, the map was determined, up to a unit,
in~\cite[Proposition~5.1]{gerhardt}. We note that, contrary to what
one might first expect, the map is generally not injective.
\end{remark}

\section{The cyclic bar-construction}\label{barconstructionsection}

The $k$-algebras $A = k[x,y]/(x^b-y^a)$ and $B = k[t]$ both are monoid
algebras, and moreover, the $k$-algebra map $f \colon A \to B$ that
takes $x$ to $t^a$ and $y$ to $t^b$ is induced by a monoid map. In
general, if $M$ is a monoid and $k[M]$ the associated monoid
$k$-algebra, then there is a canonical natural map of cyclotomic
spectra
$$\xymatrix{
{ N^{\cy}(M)_+ \wedge T(k) } \ar[r]^-{\alpha} &
{ T(k[M]) } \cr
}$$
where $N^{\cy}(M)$ is the cyclic bar-construction of $M$ whose
definition we recall below. It follows from~\cite[Theorem~7.1]{hm}
that this map induces isomorphisms of equivariant homotopy groups, and
hence, one may hope to evaluate the groups $\TR_q^m(k[M])$ by
understanding the structure of the $\mathbb{T}$-space
$N^{\cy}(M)$. In this section,
we will examine the structure of the cyclic bar-construction of
the monoids corresponding to $A$ and $B$. We refer the reader to
Loday's book~\cite{loday} for an introduction to cyclic sets and their 
geometric realization, but see also~\cite{drinfeld,saito}.

The cyclic bar-construction of a monoid $M$ is the cyclic set
$N^{\cy}(M)[-]$ whose set of $q$-simplices is the $(q+1)$-fold product
$$N^{\cy}(M)[q] = M \times \dots \times M$$
and whose cyclic structure maps are defined as follows.
$$\begin{aligned}
d_i(x_0,\dots,x_q) & = \begin{cases}
(x_0, \dots, x_ix_{i+1}, \dots, x_q) & \hskip6mm\text{for $0 \leqslant i < q$}
\cr
(x_qx_0, x_1, \dots, x_{q-1}) & \hskip6mm\text{for $i = q$} \cr
\end{cases} \cr
s_i(x_0,\dots,x_q) & = (x_0,\dots,x_i,1,x_{i+1},\dots,x_q) \hskip7.5mm
\text{for $0 \leqslant i \leqslant q$} \cr
t_q(x_0,\dots,x_q) & = (x_q,x_0, \dots,x_{q-1}) \cr
\end{aligned}$$
We note that the $q$-simplices of the form $(1,x_1,\dots,x_q)$ are not
degenerate, unless one or more of $x_1,\dots,x_q$ are equal to the
identity element $1 \in M$. We write
$$N^{\cy}(M) = | N^{\cy}(M)[-] |$$
for its geometric realization. It has a canonical left
$\mathbb{T}$-action, as does the geometric realization of any cyclic
set, but it has an additional structure that we now explain. The
subspace $N^{\cy}(M)^{C_s}$ fixed by $C_s \subset \mathbb{T}$ is
canonically a $\mathbb{T}/C_s$-space and we write $\rho_s^*N^{\cy}(M)^{C_s}$ for
this space considered as a $\mathbb{T}$-space via the $s$th root
$\rho_s \colon \mathbb{T} \to \mathbb{T}/C_s$. The additional
structure is a canonical isomorphism of $\mathbb{T}$-spaces
$$\xymatrix{
{ \rho_s^*N^{\cy}(M)^{C_s} } \ar[r]^-{r_s} &
{ N^{\cy}(M). } \cr
}$$
It is defined in~\cite[Section~2]{bokstedthsiangmadsen} to be
the composition the canonical (non-simplicial) isomorphism of
$\mathbb{T}$-spaces $D_s \colon \rho_s^*N^{\cy}(M)^{C_s} \to
|(\sd_s N^{\cy}(M)[-])^{C_s}|$, whose target is the geometric
realization of the cyclic set defined by the $C_s$-fixed set of the
$s$-fold edgewise subdivision of $N^{\cy}(M)[-]$, and the inverse of
the map induced by the isomorphism of cyclic sets 
$N^{\cy}(M)[-] \to (\sd_s N^{\cy}(M)[-])^{C_s}$ that to the $q$-simplex 
$(x_0,\dots,x_q)$ associates its $s$-fold repetition
$(x_0,\dots,x_q,\dots,x_0,\dots,x_q)$. 

We now define $\langle x,y \rangle$ to be the submonoid of the underlying
multiplicative monoid of $A$ given by the elements of the form
$x^iy^j$, where $(i,j)$ is a pair of non-negative integers. Similarly, we
define $\langle t \rangle$ to be the submonoid of the underlying
multiplicative monoid of $B$ given by the elements of the form $t^m$, 
where $m$ is a non-negative integer. The $k$-algebras $A$ and $B$ are
the monoid $k$-algebras of $\langle x,y \rangle$ and $\langle t
\rangle$, respectively, and the $k$-algebra homomorphism 
$f \colon A \to B$ is induced by a monoid map $f \colon 
\langle x,y \rangle \to \langle t \rangle$. We define
$N^{\cy}(f)$ to be the mapping cone of the induced map of
pointed $\mathbb{T}$-spaces $f \colon N^{\cy}(\langle x,y \rangle)_+ \to
N^{\cy}(\langle t \rangle)_+$. We also define
$$N^{\cy}(\langle t \rangle, \langle t^a,t^b \rangle) =
|N^{\cy}(\langle t \rangle, \langle t^a,t^b \rangle)[-]|$$
to be the geometric realization of the quotient of the cyclic set
$N^{\cy}(\langle t \rangle)[-]$ by the image $N^{\cy}(\langle t^a,t^b
\rangle)[-]$ of the map $f \colon N^{\cy}(\langle x,y \rangle)[-] \to
N^{\cy}(\langle t \rangle)[-]$. Since the latter map is injective, it
follows that the canonical projection
$$\xymatrix{
{ N^{\cy}(f) } \ar[r]^-{q} &
{ N^{\cy}(\langle t \rangle, \langle t^a,t^b \rangle) } \cr
}$$
is a weak equivalence of pointed $\mathbb{T}$-spaces.

The cyclic sets considered above admit the following decompositions
into wedge sums indexed by non-negative integers $m$,
$$\xymatrix{
{ \bigvee N^{\cy}(\langle x,y \rangle;m)[-]_+ } \ar[r]^-{\bigvee f}
\ar[d]^-{\sim} &
{ \bigvee N^{\cy}(\langle t \rangle;m)[-]_+ } \ar[d]^-{\sim}
\ar[r]^-{\bigvee q} &
{ \bigvee N^{\cy}(\langle t \rangle, \langle t^a,t^b \rangle;m)[-] }
\ar[d]^-{\sim} \cr
{ N^{\cy}(\langle x,y \rangle)[-]_+ } \ar[r]^-{f} &
{ N^{\cy}(\langle t \rangle)[-]_+ } \ar[r]^-{q} &
{ N^{\cy}(\langle t \rangle, \langle t^a,t^b \rangle)[-]. }
}$$
Here $N^{\cy}(\langle t \rangle;m)[-]$ is the cyclic subset of
$N^{\cy}(\langle t \rangle)[-]$ whose $q$-simplices are the
$(q+1)$-tuples $(t^{m_0}, \dots, t^{m_q})$ such that $m_0 + \dots +
m_q = m$, and the cyclic subsets $N^{\cy}(\langle x,y \rangle;m)[-]$
and $N^{\cy}(\langle t \rangle, \langle t^a,t^b \rangle;m)[-]$ are
respectively the pre-image by $f$ and the image by $q$ of this cyclic
subset. These wedge decompositions of cyclic sets induce the
following decompositions of the pointed $\mathbb{T}$-spaces considered
above into wedge sums indexed by non-negative integers $m$,
$$\xymatrix{
{ \bigvee N^{\cy}(f;m) } \ar[r]^-{\bigvee q}
\ar[d]^-{\sim} &
{ \bigvee N^{\cy}(\langle t \rangle, \langle t^a,t^b \rangle;m) }
\ar[d]^-{\sim} \cr
{ N^{\cy}(f) } \ar[r]^-{q} &
{ N^{\cy}(\langle t \rangle, \langle t^a,t^b \rangle). } \cr
}$$
Moreover, if $m = st$, then the canonical isomorphism $r_s$ defined
above induces the vertical isomorphisms in the following commutative
diagram of pointed $\mathbb{T}$-spaces,
$$\xymatrix{
{ \rho_s^*N^{\cy}(f;m)^{C_s} } \ar[r]^-{\rho_s^*q^{C_s}} \ar[d]^-{r_s}
&
{ \rho_s^*N^{\cy}(\langle t \rangle,\langle t^a,t^b \rangle;m)^{C_s} }
\ar[d]^-{r_s} \cr
{ N^{\cy}(f;t) } \ar[r]^-{q} &
{ N^{\cy}(\langle t \rangle; \langle t^a,t^b \rangle;t) }. \cr
}$$
We also note that if the positive integer $s$ does not divide $m$,
then the pointed $\mathbb{T}$-spaces in the top line of this diagram
both are singletons. 

Let $m = st$ be a positive integer. We recall the pointed $C_m$-space
$X(a,b,m)$ and the isomorphism of pointed $C_t$-spaces $r_{X,s} \colon
\rho_s^*X(a,b,m)^{C_s} \to X(a,b,t)$ defined in the introduction. We
precompose the induced isomorphism of pointed $\mathbb{T}$-spaces
$$\begin{xy}
(-23,0)*+{ \mathbb{T}_+ \wedge_{C_t} \rho_s^*X(a,b,m)^{C_s} }="a";
(23,0)*+{ \mathbb{T}_+ \wedge_{C_t} X(a,b,t)}="b";
{ \ar^-{\id \wedge r_{X,s}} "b";"a";};
\end{xy}$$
with the inverse of the isomorphism of pointed $\mathbb{T}$-spaces
$$\xymatrix{
{ \rho_s^*( \mathbb{T}_+ \wedge_{C_m} X(a,b,m) )^{C_s} } &
{ \mathbb{T}_+ \wedge_{C_t} \rho_s^*X(a,b,m)^{C_s} } \ar[l] \cr
}$$
that to the class of $(z,x)$ associates the class of $(\rho_s(z),x)$
to obtain the isomorphism
$$\xymatrix{
{ \rho_s^*( \mathbb{T}_+ \wedge_{C_m} X(a,b,m) )^{C_s} } \ar[r]^-{r_s} &
{ \mathbb{T}_+ \wedge_{C_t} \rho_s^*X(a,b,t). } \cr
}$$

\begin{prop}\label{barconstructionlemma}The pointed $\mathbb{T}$-space
$N^{\cy}(\langle t \rangle, \langle t^a,t^b \rangle;0)$ is a
singleton, and for every positive integer $m$, there is a canonical
isomorphism of pointed $\mathbb{T}$-spaces
$$\xymatrix{
{ \mathbb{T}_+ \wedge_{C_m} X(a,b,m)} \ar[r]^-{e_m} &
{ N^{\cy}(\langle t \rangle, \langle t^a,t^b \rangle;m). } \cr
}$$
Moreover, if $m = st$, then the diagram
$$\begin{xy}
(-25,7)*+{ \rho_s^*( \mathbb{T}_+ \wedge_{C_m} X(a,b,m) )^{C_s} }="a";
(25,7)*+{ \rho_s^* N^{\cy}( \langle t \rangle, \langle t^a,t^b
  \rangle;m)^{C_s} }="b";
(-25,-7)*+{ \mathbb{T}_+ \wedge_{C_t} X(a,b,t) }="c";
(25,-7)*+{ N^{\cy}(\langle t \rangle, \langle t^a,t^b \rangle;t)}="d";
{ \ar^-{\rho_s^*e_m^{C_s}} "b";"a";};
{ \ar^-{r_s} "c";"a";};
{ \ar^-{r_s} "d";"b";};
{ \ar^-{e_t} "d";"c";};
\end{xy}$$
commutes.
\end{prop}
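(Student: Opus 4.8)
The plan is to identify the cyclic set $N^{\cy}(\langle t\rangle,\langle t^a,t^b\rangle;m)[-]$ explicitly and to recognize its geometric realization as a balanced smash product over $C_m$. First I would analyze the $q$-simplices: a $q$-simplex of $N^{\cy}(\langle t\rangle;m)[-]$ is a tuple $(t^{m_0},\dots,t^{m_q})$ with $m_0+\cdots+m_q=m$, $m_i\geqslant 0$; it lies in the image of $f$ precisely when each $m_i$ is expressible as $ai+bj$ with $(i,j)$ a pair of non-negative integers. Hence the quotient cyclic set kills exactly those tuples all of whose entries are ``good'' (expressible as $ai+bj$). For $m=0$ the only simplices are the degenerate tuples $(1,\dots,1)$, so the realization is a point; this gives the first assertion. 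For $m>0$ the non-basepoint simplices are indexed by a cyclic word of ``gaps'' summing to $m$. I would make precise the standard identification (as in~\cite{bokstedthsiangmadsen},~\cite{hm}) of $N^{\cy}(\langle t\rangle;m)$ with $\mathbb{T}_+\wedge_{C_m}\Delta^{m-1}$, where $\Delta^{m-1}$ is the simplicial complex on the vertex set $C_m$: a face $F=\{\zeta_m^{r_1},\dots,\zeta_m^{r_k}\}$ with $0\leqslant r_1<\cdots<r_k<m$ corresponds to the simplex recording the $k$ cyclic gaps $r_{s+1}-r_s$ and $r_1+m-r_k$, and the $\mathbb{T}$-action together with the $C_m$-action matches the cyclic structure. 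Under this identification the sub-$C_m$-space coming from the image of $f$ is exactly the subcomplex of faces all of whose gaps are good, i.e.\ $\Sigma(a,b,m)$; therefore passing to the quotient yields the asserted isomorphism $e_m\colon \mathbb{T}_+\wedge_{C_m}X(a,b,m)\to N^{\cy}(\langle t\rangle,\langle t^a,t^b\rangle;m)$ with $X(a,b,m)=\Delta^{m-1}/\Sigma(a,b,m)$.

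Next I would check that $e_m$ is a map of $\mathbb{T}$-spaces and is natural in the relevant sense: the $\mathbb{T}$-action on $N^{\cy}$ corresponds to the $\mathbb{T}$-factor, and the residual $C_m$-action on $\Delta^{m-1}$ (rotation of vertices) is compatible with the quotient to $X(a,b,m)$ by construction of $\Sigma(a,b,m)$. The key remaining point is the compatibility square for $m=st$. On the left-hand side, $r_s$ for $\mathbb{T}_+\wedge_{C_m}X(a,b,m)$ was assembled in the paragraph preceding the proposition from $r_{X,s}\colon \rho_s^*X(a,b,m)^{C_s}\to X(a,b,t)$, whose inverse sends the class of $\sum a_v v$ over a face $F\subset C_t$ to the class of $(1/s)\sum a_{z^s}z$ over $F^{1/s}\subset C_m$. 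On the right-hand side, $r_s$ for $N^{\cy}$ is the composite of Bökstedt--Hsiang--Madsen's isomorphism $D_s$ with the inverse of the $s$-fold repetition isomorphism of cyclic sets. I would verify that under the identification of $N^{\cy}(\langle t\rangle;m)$ with $\mathbb{T}_+\wedge_{C_m}\Delta^{m-1}$, the BHM map $r_s$ corresponds precisely to $r_{\Delta,s}$: the $s$-fold repetition of a $q$-simplex $(t^{m_0},\dots,t^{m_q})$ has gaps repeated $s$ times, which on the $\Delta$-side is exactly the face $F^{1/s}$ with the subdivided barycentric coordinates, matching the formula for $r_{X,s}^{-1}$ after dividing by $s$. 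Since $\Sigma(a,b,m)^{1/s}=\Sigma(a,b,m)\cap(\text{faces with }C_s\text{-stable support})$ corresponds to $\Sigma(a,b,t)$ under this identification — a gap of the repeated word is good iff the corresponding gap of the original word is good — the square descends to the quotient $X$, giving the stated commutativity.

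The main obstacle I expect is the bookkeeping in the last step: carefully matching the three different ``$r_s$'' maps (the abstract cyclotomic $r_s$ on $N^{\cy}$ via $D_s$ and edgewise subdivision, the combinatorially defined $r_{X,s}$ on $X(a,b,m)$, and the identification $N^{\cy}(\langle t\rangle;m)\cong\mathbb{T}_+\wedge_{C_m}\Delta^{m-1}$) and confirming that all the barycentric-coordinate normalizations (the factor $1/s$) and the orientation conventions for simplices agree. This is essentially a diagram-chase at the level of cyclic sets, made slightly delicate because $D_s$ is non-simplicial; I would handle it by passing to the $s$-fold edgewise subdivision $\sd_s N^{\cy}(\langle t\rangle;m)[-]$, where both the repetition isomorphism and the $C_s$-fixed-point functor are simplicial, and then invoking the explicit description of $D_s$ from~\cite[Section~2]{bokstedthsiangmadsen}. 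Once the identification is pinned down for $\Delta^{m-1}$, the passage to $X(a,b,m)$ and to $N^{\cy}(\langle t\rangle,\langle t^a,t^b\rangle;m)$ is formal, since both quotients are taken along the same $C_s$-equivariant subcomplex.
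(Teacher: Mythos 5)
Your proposal takes essentially the same route as the paper: the canonical identification $e_m \colon \mathbb{T} \times_{C_m} \Delta^{m-1} \to N^{\cy}(\langle t \rangle;m)$ (from \cite[Section~7.2]{hm}), the matching of $\Sigma(a,b,m)$ with the image of $f$ via the gap/exponent correspondence, and passage to quotient $\mathbb{T}$-spaces; the only detail the paper makes explicit that you gloss over is that $N^{\cy}(\langle t^a,t^b \rangle;m)[-]$ is generated as a cyclic set (though not as a simplicial set) by its positive simplices, which is what gives surjectivity of the restricted map onto the simplices having some exponent equal to zero. Your verification of the $r_s$-compatibility through the $s$-fold edgewise subdivision is sound and in fact more detailed than the paper, which leaves that square as an evident check from the explicit formulas for $r_{X,s}^{-1}$ and $r_s$.
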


\begin{proof}The case $m = 0$ is clear, so we suppose that $m$ is a
positive integer. The cyclic set $N^{\cy}(\langle t \rangle;m)[-]$ is
generated by the $(m-1)$-simplex $ (t,\dots,t)$ subject to the
relations generated by the identity $t_{m-1}(t,\dots,t) = (t, \dots,
t)$. Hence, it follows from~\cite[Section~7.2]{hm} that there is
a canonical $\mathbb{T}$-equivariant homeomorphism
$$\xymatrix{
{ \mathbb{T} \times_{C_m} \Delta^{m-1} } \ar[r]^-{e_m} &
{ N^{\cy}(\langle t \rangle;m) } \cr
}$$
from the left $\mathbb{T}$-space induced from the left $C_m$-space
$\Delta^{m-1}$. We claim that it restricts to a
$\mathbb{T}$-equivariant homeomorphism of the sub-$\mathbb{T}$-space
$\mathbb{T} \times_{C_m} \Sigma(a,b,m)$ of the domain onto the
sub-$\mathbb{T}$-space $N^{\cy}(\langle t^a,t^b \rangle;m)$ of the
target. To see this, let us say that a $q$-simplex $(t^{m_0}, \dots,
t^{m_q})$ in the cyclic set $N^{\cy}(\langle t^a, t^b \rangle;m)[-]$
is positive if the exponents $m_0, \dots, m_q$ all are positive. Now,
if  $F = \theta^*(\Delta^{m-1})$ is a $q$-simplex in $\Sigma(a,b,m)$ 
indexed by $\theta \colon [q] \to [m-1]$, then
$$\theta^*(t,\dots,t) = (t^{m_0}, \dots, t^{m_q})$$
is a positive $q$-simplex in $N^{\cy}(\langle t^a,t^b \rangle;m)[-]$,
and conversely, every positive simplex in $N^{\cy}(\langle t^a,t^b
\rangle;m)[-]$ arises (non-uniquely) in this way. It follows, that
$e_m$ restricts to a necessarily injective $\mathbb{T}$-equivariant
map
$$\xymatrix{
{ \mathbb{T} \times_{C_m} \Sigma(a,b,m) } \ar[r]^-{e_m} &
{ N^{\cy}(\langle t^a,t^b \rangle;m). } \cr
}$$
But this map also is surjective, since $N^{\cy}(\langle t^a,t^b
\rangle;m)[-]$ is generated as a cyclic set (but not as a simplicial
set) by the positive simplices. It follows that $e_m$ induces the
stated isomorphism of the quotient $\mathbb{T}$-spaces. 
\end{proof}

\begin{remark}\label{untwistingremark}The $C_m$-action on
$X(a,b,m)$ does not extend to a $\mathbb{T}$-action, and therefore,
$\mathbb{T}_+ \wedge_{C_m} X(a,b,m)$ does not admit untwisting into a
smash product of two pointed $\mathbb{T}$-spaces. By contrast, the
$C_m$-action on $S^{\lambda(a,b,m)}$ extends to a $\mathbb{T}$-action,
and hence, we obtain the untwisting isomorphism
$$\xymatrix{
{ \mathbb{T}_+ \wedge_{C_m} S^{\lambda(a,b,m)} } \ar[r]^-{\xi} & 
{ (\mathbb{T}/C_m)_+ \wedge S^{\lambda(a,b,m)} } \cr
}$$
that maps the class of $(z,v)$ to $(zC_m,zv)$. Here, the target is
given the diagonal $\mathbb{T}$-action.
\end{remark}

\section{Proof of Theorem~\ref{main}}\label{proofssection}

In this section we complete the proof of Theorem~\ref{main} in the
introduction. We first use the comparison theorems between $K$-theory
and topological cyclic homology proved by McCarthy~\cite{mccarthy1}
and by Geisser and the author~\cite{gh4,gh5} to prove the following
general result.

\begin{theorem}\label{excision}Let $k$ be a unital associative ring, 
let $1 < a < b$ be
relatively prime integers, let $A = k[x,y]/(x^b - y^a)$, let $B =
k[t]$, and let $f \colon A \to B$ be the $k$-algebra homomorphism that
maps $x$ and $y$ to $t^a$ and $t^b$, respectively. If the prime number $p$ is
nilpotent in $k$, then the diagram of symmetric spectra
$$\xymatrix{
{ K(A) } \ar[r]^-{\tr} \ar[d]^-{f} &
{ \TC(A) } \ar[d]^-{f} \cr
{ K(B) } \ar[r]^-{\tr} &
{ \TC(B) } \cr
}$$
is homotopy cartesian.  The diagram becomes homotopy cartesian upon
pro-finite completion for every unital associative ring $k$.
\end{theorem}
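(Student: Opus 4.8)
The plan is to deduce Theorem~\ref{excision} from two known ingredients: the McCarthy--Dundas theorem that relative $K$-theory and relative $\TC$ agree along nilpotent extensions, and the excision/descent results of Geisser and the author in~\cite{gh4,gh5}. First I would reduce the statement to a statement about the square of vertical mapping fibers. Let $I = \ker(g\colon A \to k)$ — no, more to the point, let $f\colon A \to B$ be the given map. Since the map $g\colon A\to k$ admits the section provided by the inclusion $k\hookrightarrow A$ and since, by Remark~\ref{conductor}, the conductor ideal $\mathfrak{c} = (t^{(a-1)(b-1)})$ maps isomorphically to an ideal of $B$, the pair $(A,B,\mathfrak{c})$ is a Milnor square: $A = B\times_{B/\mathfrak{c}} A/\mathfrak{c}$, where $\mathfrak{c}\subset A$ and $\mathfrak{c}\subset B$ denote the conductor viewed in both rings. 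So the square in the statement is obtained by comparing the $K$-theory and $\TC$ of this Milnor square.

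Next I would invoke the main theorem of~\cite{gh4} (respectively~\cite{gh5}): for a Milnor square of rings as above, the resulting square of $\TC$-spectra is always homotopy cartesian — topological cyclic homology satisfies excision for such squares unconditionally, because $\THH$ does and the cyclotomic structure is compatible with the relevant colimit. Hence the mapping fiber of $f\colon \TC(A)\to \TC(B)$ is canonically identified with the mapping fiber of $\TC(A/\mathfrak{c})\to \TC(B/\mathfrak{c})$, i.e.\ with the relative term $\TC(A/\mathfrak{c},B/\mathfrak{c})$ — wait, more precisely with the relative $\TC$ of the pair $(A/\mathfrak c \to B/\mathfrak c)$, a map of $\Z/p$-algebras that are finitely generated as modules over the ground ring and in particular finite (in the split case $k$ a field). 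Since $B/\mathfrak{c} = k[t]/(t^{(a-1)(b-1)})$ is a nilpotent extension of $k$ (as $t$ is nilpotent there), and $A/\mathfrak{c}$ is a nilpotent extension of $A/\mathfrak{a} = k$ as well, the Dundas--Goodwillie--McCarthy theorem~\cite{mccarthy1} applies: for a surjection of rings with nilpotent kernel, the cyclotomic trace induces an equivalence of relative $K$-theory and relative $\TC$ after $p$-completion (and integrally when $p$ is nilpotent in the base). Therefore the square of $K$-theory spectra for the Milnor square has the same mapping-fiber comparison as the square of $\TC$-spectra — this is precisely the assertion that the original square is homotopy cartesian.

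Assembling these: let $P$ denote the homotopy pullback of $K(B)\to K(A/\mathfrak c \text{ via } B/\mathfrak c) \leftarrow K(B/\mathfrak c) \leftarrow$ — more cleanly, form the diagram whose rows are the $K$-theory and $\TC$ localization (Milnor) squares and whose columns are the cyclotomic trace. The outer square and the $\TC$-square are homotopy cartesian (the latter by~\cite{gh4,gh5}), so the $K$-theory square is homotopy cartesian if and only if the induced map on the total fibers — which is exactly the relative trace map for the nilpotent extension $A/\mathfrak c \to B/\mathfrak c$ of $\Fp$-algebras — is an equivalence. When $p$ is nilpotent in $k$, this holds integrally by the Dundas--McCarthy theorem in the form used by McCarthy~\cite{mccarthy1}; for general $k$ it holds after pro-finite completion, because after pro-finite completion $\TC$ agrees with $\TC(-;p)$ suitably assembled and the relative trace is an equivalence after $p$-completion for every prime $p$ (again by~\cite{mccarthy1}, together with the compatibility of the Milnor square with completion). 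This yields both assertions.

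The main obstacle is the excision statement for $\TC$: one must know that the Milnor square of $\TC$-spectra attached to the conductor square $(A,B,\mathfrak c)$ is homotopy cartesian without any further hypotheses, and this is not automatic — $\THH$ (hence $\TC$) does not satisfy excision in general, so one genuinely needs the results of~\cite{gh4,gh5}, which exploit the specific bimodule structure of $\mathfrak c$ (in particular that $\mathfrak c$ is an ideal of both $A$ and $B$, i.e.\ that $B/A$ is "supported on the conductor") to run a Bott-periodicity-style filtration argument on the relative cyclic nerve and verify that the error terms vanish. Everything else — the Milnor square, the nilpotence of $B/\mathfrak c$ over $k$ and of $A/\mathfrak c$ over $k$, and the invocation of~\cite{mccarthy1} — is formal. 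The pro-finite completion clause requires only the additional remark that a square that becomes homotopy cartesian after $p$-completion for every prime $p$ and involves connective spectra with degreewise finitely generated homotopy (which holds here when $k$ is Noetherian; in general one completes first) is homotopy cartesian after pro-finite completion.
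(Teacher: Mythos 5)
Your reduction to the conductor square and your handling of the quotients (both $A/I$ and $B/J = k[t]/(t^{(a-1)(b-1)})$ are nilpotent extensions of $k = A/\mathfrak{a} \cong B/\mathfrak{b}$, so nilpotent invariance of the trace identifies relative $K$ and relative $\TC$ for the pair $A/I \to B/J$) are in line with the paper. But the load-bearing step is wrong: you assert that the main theorems of \cite{gh4,gh5} say that $\TC$ satisfies excision for the Milnor square $(A,B,I)$, i.e.\ that the square of $\TC$-spectra attached to the conductor square is homotopy cartesian. That is not what those theorems state, and it is false in the situation at hand: neither $K$-theory nor topological cyclic homology (nor topological Hochschild homology, contrary to your parenthetical justification, which your own closing paragraph contradicts) satisfies excision for the conductor square of a cuspical curve --- the failure of excision here is precisely what makes the relative groups $K_q(A,\mathfrak{a})$ interesting. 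What \cite[Theorem~1]{gh4} and \cite[Theorem~D]{gh5} actually prove is the \emph{bi-relative} comparison: the cube formed by the $K$-theory conductor square, the $\TC$ conductor square, and the cyclotomic trace maps between them is homotopy cartesian (after profinite completion, resp.\ integrally when $p$ is nilpotent in $k$); equivalently, bi-relative $K$-theory agrees with bi-relative $\TC$. Neither face of that cube is claimed, or true, to be cartesian on its own. Relatedly, your conclusion that ``the $K$-theory square for the Milnor square is homotopy cartesian'' is both false and not the statement of Theorem~\ref{excision}, which concerns the trace square for $f \colon A \to B$, not the Milnor square; and attributing the integral (``$p$ nilpotent'') form of nilpotent invariance to \cite{mccarthy1} is off --- McCarthy gives the profinitely completed statement, while the integral statement is \cite[Theorem~B]{gh5}.

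Once the false excision claims are removed, your assembly does not close, because identifying the fiber of $\TC(A) \to \TC(B)$ with that of $\TC(A/I) \to \TC(B/J)$ (and likewise for $K$) is exactly what fails. The correct assembly, which is the paper's, uses only the cube: (i) the trace square for $A/\mathfrak{a} \to B/\mathfrak{b}$ is cartesian trivially, since that map is an isomorphism; (ii) the trace squares for the projections $A/I \to A/\mathfrak{a}$ and $B/J \to B/\mathfrak{b}$ are cartesian by \cite[Theorem~B]{gh5} (resp.\ \cite{mccarthy1} after profinite completion), since $\mathfrak{a}/I$ and $\mathfrak{b}/J$ are nilpotent; hence the trace square for $A/I \to B/J$ is cartesian; (iii) the bi-relative theorem \cite[Theorem~D]{gh5} (resp.\ \cite[Theorem~1]{gh4}) says the cube over the conductor square is homotopy cartesian, so cartesianness of the trace square for $A/I \to B/J$ propagates to the trace square for $A \to B$, which is the assertion of Theorem~\ref{excision}. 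In short: you need the statement ``bi-relative $K$ equals bi-relative $\TC$,'' not ``$\TC$ satisfies excision,'' and the two are genuinely different here.
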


\begin{proof}Let $I \subset A$ be the conductor ideal from $A$ to $B$,
which we calculated in Remark~\ref{conductor}, and let 
$J = f(I) \subset B$. Let $\mathfrak{a} \subset A$ be the ideal
generated by $x$ and $y$, and let $\mathfrak{b} \subset B$ be the
ideal generated by $t$. We consider the following diagram of spectra,
where all vertical maps are induced by $f \colon A \to B$, where
all maps from the back rectangular diagram to the front rectangular
diagram are the cyclotomic trace maps, and where all horizontal maps
are induced by the respective canonical projection maps of
$k$-algebras.
$$\begin{xy}
(0,0)*+{ K(A) }="a1";
(28,0)*+{ K(A/I) }="b1";
(56,0)*+{ K(A/\mathfrak{a}) }="x1";
(17,-7)*+{ \TC(A) }="c1";
(45.5,-7)*+{ \TC(A/I) }="d1";
(74,-7)*+{ \TC(A/\mathfrak{a}) }="y1";
(0,-16)*+{ K(B) }="a2";
(28,-16)*+{ K(B/J) }="b2";
(56,-16)*+{ K(B/\mathfrak{b}) }="x2";
(17,-23)*+{ \TC(B) }="c2";
(45.5,-23)*+{ \TC(B/J) }="d2";
(74,-23)*+{ \TC(B/\mathfrak{b}) }="y2";
{ \ar "b1";"a1";};
{ \ar "c1";"a1";};
{ \ar "d1";"c1";};
{ \ar "d1";"b1";};
{ \ar "x1";"b1";};
{ \ar "y1";"d1";};
{ \ar "y1";"x1";};
{ \ar|(.6)\hole "b2";"a2";};
{ \ar "c2";"a2";};
{ \ar "d2";"c2";};
{ \ar "d2";"b2";};
{ \ar|(.62)\hole "x2";"b2";};
{ \ar "y2";"d2";};
{ \ar "y2";"x2";};
{ \ar "a2";"a1";};
{ \ar|(.44)\hole "b2";"b1";};
{ \ar "c2";"c1";};
{ \ar "d2";"d1";};
{ \ar|(.44)\hole "x2";"x1";};
{ \ar "y2";"y1";};
\end{xy}$$
The right-hand vertical square is homotopy cartesian for the trivial
reason that the map $\bar{f} \colon A/\mathfrak{a} \to B/\mathfrak{b}$
induced by $f \colon A \to B$ is an isomorphism. Moreover, it follows
from~\cite[Theorem~B]{gh5} that the top and bottom right-hand
horizontal squares both are homotopy cartesian, since the ideals
$\mathfrak{a}/I \subset A/I$ and $\mathfrak{b}/J \subset B/J$ are
nilpotent. Therefore, also the middle vertical square
is homotopy cartesian. Finally, it follows from \cite[Theorem~D]{gh5} that the
left-hand cubical diagram is homotopy cartesian. This shows that the
left-hand vertical square is homotopy cartesian as stated. The
statement for a general ring $k$ is proved similarly,
substituting~\cite[Main Theorem]{mccarthy1} for~\cite[Theorem~B]{gh5}
and~\cite[Theorem~1]{gh4} for~\cite[Theorem~D]{gh5}.
\end{proof}

We consider the diagram in the statement of
Theorem~\ref{excision}. The theorem shows that the cyclotomic trace
map induces an weak equivalence of the homotopy fiber of the left-hand
vertical map to the homotopy fiber of the right-hand vertical
map. In view of Proposition~\ref{barconstructionlemma}, if we assume
Conjecture~\ref{mainconjecture}, then we may repeat the argument
of~\cite[\S8]{hm} to obtain the following result.

\begin{prop}\label{tcthm}Let $1 < a < b$ be relatively prime
integers, let $k$ be an arbitrary ring, let $A = k[x,y]/(x^b - y^a)$, let 
$B =k[t]$, and let $f \colon A \to B$ be the $k$-algebra map
that takes $x$ to $t^a$ and $y$ to $t^b$. Assuming
Conjecture~\ref{mainconjecture}, the profinite completions of the
mapping fiber of the map of topological cyclic homology spectra
$$\xymatrix{
{ \TC(A) } \ar[r]^-{f} &
{ \TC(B) } \cr
}$$
induced by $f$ and the iterated mapping cone of the diagram of spectra
$$\xymatrix{
{ \holim_R (S^{\lambda(a,b,m)} \wedge T(k))^{C_{m/ab}} } \ar[r]^-{V_b} \ar[d]^-{V_a} &
{ \holim_R (S^{\lambda(a,b,m)} \wedge T(k))^{C_{m/a}} } \ar[d]^-{V_a} \cr
{ \holim_R (S^{\lambda(a,b,m)} \wedge T(k))^{C_{m/b}} } \ar[r]^-{V_b} &
{ \holim_R (S^{\lambda(a,b,m)} \wedge T(k))^{C_m} } \cr
}$$
are canonically weakly equivalent. Here we index all four homotopy
limits by the set $\N$ of positive integers ordered under division
with the understanding that the $m$th term in the upper right-hand
limit {\rm (}resp.~lower left-hand limit, resp.~upper left-hand
limit\hskip1pt{\rm )} is trivial if $m$ is not divisible by $a$ 
{\rm (}resp.~by $b$, resp.~by $ab$\hskip1pt{\rm )}.
\end{prop}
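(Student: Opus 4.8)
The plan is to combine the comparison results established in this paper with the circle-equivariant analysis of topological cyclic homology of monoid algebras developed in \cite{hm}. First I would recall that, by Theorem~\ref{excision}, the cyclotomic trace induces, after profinite completion, a weak equivalence from the mapping fiber of $K(A) \to K(B)$ to the mapping fiber of $\TC(A) \to \TC(B)$; so it suffices to identify the latter. Since $A = k[\langle x,y\rangle]$ and $B = k[\langle t\rangle]$ are monoid algebras and $f$ is induced by the monoid map $\langle x,y\rangle \to \langle t\rangle$, the canonical map $\alpha \colon N^{\cy}(M)_+ \wedge T(k) \to T(k[M])$ of \cite[Theorem~7.1]{hm} is an equivariant equivalence, and hence the mapping fiber of $\TC(A) \to \TC(B)$ is computed, after profinite completion, from the pointed $\mathbb{T}$-space $N^{\cy}(f)$, or equivalently by Section~\ref{barconstructionsection} from $N^{\cy}(\langle t\rangle,\langle t^a,t^b\rangle)$, which by Proposition~\ref{barconstructionlemma} decomposes over weights $m$ as $\mathbb{T}_+ \wedge_{C_m} X(a,b,m)$.

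Next I would invoke Conjecture~\ref{mainconjecture}: the maps $u(a,b,m)$ assemble, by property~(1), into a map compatible with the cyclotomic structure maps $r_s$ (this is exactly what the commuting square in Proposition~\ref{barconstructionlemma} together with the analogous square $r_{Y,s}$ from the introduction is set up to allow), and by property~(2) the induced map $\mathbb{T}_+ \wedge_{C_m} Y(a,b,m) \to \mathbb{T}_+ \wedge_{C_m} X(a,b,m)$ is a homology isomorphism of pointed $\mathbb{T}$-spaces, hence becomes a stable equivalence after smashing with any spectrum and after profinite completion. Therefore, after profinite completion, the weight-$m$ summand of the mapping fiber of $\TC(A) \to \TC(B)$ is computed from $\mathbb{T}_+ \wedge_{C_m} Y(a,b,m) \wedge T(k)$ in place of $\mathbb{T}_+ \wedge_{C_m} X(a,b,m) \wedge T(k)$. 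The point of replacing $X$ by $Y$ is that $Y(a,b,m)$ is built, via the untwisting isomorphism of Remark~\ref{untwistingremark}, from the representation sphere $S^{\lambda(a,b,m)}$ smashed with (possibly) the cofibers $\widetilde{C}_a$ and $\widetilde{C}_b$; the cofiber sequences $C_{a+} \to S^0 \to \widetilde{C}_a$ and $C_{b+} \to S^0 \to \widetilde{C}_b$, after applying $\mathbb{T}_+\wedge_{C_m}(-)\wedge T(k)$ and passing to $\TC$, turn the two smash factors $\widetilde{C}_a$ and $\widetilde{C}_b$ into the two Verschiebung maps $V_a$ and $V_b$ of the displayed square, with the four homotopy limits $\holim_R (S^{\lambda(a,b,m)} \wedge T(k))^{C_{m/?}}$ being exactly the $\TC$-style fixed-point spectra attached to the summand $S^{\lambda(a,b,m)}$ in each of the four cases of the definition of $Y(a,b,m)$. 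Summing over $m$ and recognizing the iterated cofiber of the $V_a$, $V_b$ square then yields the asserted weak equivalence; this is the step where one repeats verbatim the argument of \cite[\S8]{hm}, the only input beyond \emph{loc.~cit.} being the identification of the relevant $\mathbb{T}$-spaces carried out above.

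The main obstacle is bookkeeping rather than a genuinely new idea: one must check that the fixed-point and restriction maps $r_s$ for $N^{\cy}(\langle t\rangle,\langle t^a,t^b\rangle)$, for $X(a,b,m)$, and for $Y(a,b,m)$ all match up under $e_m$ and $u(a,b,m)$ so that the homotopy limits over $R$ are well-defined and the $V_a$, $V_b$ square is the correct one, including the convention that the $m$th term is trivial unless $a$, $b$, or $ab$ divides $m$ — this is precisely what the divisibility clauses in the four cases of $Y(a,b,m)$ encode, since $\widetilde{C}_a$ restricted to $C_s$ with $s \nmid a$ becomes contractible. Granting Conjecture~\ref{mainconjecture} and the compatibility squares already recorded in Section~\ref{barconstructionsection} and the introduction, no further difficulty arises, and Theorem~\ref{main} then follows by feeding this square into the computation of $\TR_{q-\lambda}^m(k)$ from Section~\ref{trgroupssection}, which I defer to the remainder of Section~\ref{proofssection}.
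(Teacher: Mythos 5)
Your proposal takes essentially the same route as the paper, whose own proof consists precisely of the observation that, given Proposition~\ref{barconstructionlemma} and Conjecture~\ref{mainconjecture}, one repeats the argument of \cite[\S 8]{hm}: identify $T(A)\to T(B)$ via the cyclic bar-construction of the monoid map, decompose by weight $m$, replace $X(a,b,m)$ by $Y(a,b,m)$, untwist, and use the cofibration sequences for $\widetilde{C}_a$ and $\widetilde{C}_b$ to produce the $V_a$, $V_b$ square of fixed-point homotopy limits over $R$. The only caveat is that a homology isomorphism of the spaces $\mathbb{T}_+\wedge_{C_m}X(a,b,m)$ does not by itself yield equivalences of the relevant fixed-point spectra after smashing with $T(k)$ — that is exactly what the inductive argument of \cite[\S 8]{hm} together with the compatibility~(1) with the maps $r_s$ provides, and since you defer to it (as does the paper), the argument stands; the appeal to Theorem~\ref{excision} (which belongs to the deduction of Theorem~\ref{main}, not to this proposition) and the reversed direction of $u(a,b,m)$ are harmless slips.
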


We remark that by~\cite[Proposition~1.1]{agh}, the canonical
projections from the four terms in the diagram in
Proposition~\ref{tcthm} to the $m$th terms of the corresponding limit
systems are $\dim_{\R}(\lambda(a,b,2m))$-connected. In particular,
the induced diagram of $q$th homotopy groups maps isomorphically onto
the diagram
\begin{equation}\label{diagramoflimits}
\begin{xy}
(-24,8)*+{ \lim_R \TR_{q-\lambda(a,b,m)}^{m/ab}(k)}="11";
(24,8)*+{ \lim_R \TR_{q-\lambda(a,b,m)}^{m/a}(k)\phantom{.} }="12";
(-24,-8)*+{ \lim_R \TR_{q-\lambda(a,b,m)}^{m/b}(k) }="21";
(24,-8)*+{ \lim_R \TR_{q-\lambda(a,b,m)}^{m}(k) }="22";
{ \ar^-{V_b} "12";"11";};
{ \ar^-{V_b} "22";"21";};
{ \ar^-{V_a} "21";"11";};
{ \ar^-{V_a} "22";"12";};
\end{xy}
\end{equation}
and the limits in the diagram stabilize. Here we again index all four
limits by the set $\N$ ordered under division. As explained in
Section~\ref{trgroupssection}, the group $\TR_{q-\lambda}^m(k)$ has a
canonical module structure over the ring $\TR_0^m(k)$, and hence, we
can view it as a module over the ring $\mathbb{W}(k)$ via the
composite ring homomorphism
$$\xymatrix{
{ \mathbb{W}(k) } \ar[r]^-{R_{\langle m \rangle}^{\N}} &
{ \mathbb{W}_{\langle m \rangle}(k) } \ar[r]^-{\eta_{\langle m
    \rangle}} &
{ \TR_0^m(k). } \cr
}$$
The structure maps in the four limit systems are $\mathbb{W}(k)$-linear
with respect to this $\mathbb{W}(k)$-module structure, since the
map $\eta$ is compatible with restriction maps. This defines
$\mathbb{W}(k)$-module structures on the four limits in the diagram
above. However, the maps $V_a$ are not linear with respect to this
module structure but instead are $F_a$-linear in the sense that the
projection formula  $x \cdot V_a(y) = V_a(F_a(x) \cdot y)$ holds, and
similarly for the maps $V_b$.

\begin{proof}[Proof of Theorem~\ref{main}]Let $k$ be a regular
$\Fp$-algebra. We construct, for every positive integer $n$, a
canonical isomorphism of $\mathbb{W}(k)$-modules
$$\xymatrix{
{ \bigoplus_{r \geqslant 0} \mathbb{W}_{S(a,b,r)/n}\Omega_k^{q-2r} }
\ar[r]^-{f_n} &
{ \lim_R\TR_{q-\lambda(a,b,m)}^{m/n}(k) } \cr
}$$
compatible with the respectible Verschiebung operators. These
isomorphisms define a canonical isomorphism from the diagram
$$\begin{xy}
(-24,8)*+{ \bigoplus_{r \geqslant 0} \mathbb{W}_{S(a,b,r)/ab}\Omega_k^{q-2r} }="11";
(24,8)*+{ \bigoplus_{r \geqslant 0} \mathbb{W}_{S(a,b,r)/a}\Omega_k^{q-2r} }="12";
(-24,-8)*+{ \bigoplus_{r \geqslant 0} \mathbb{W}_{S(a,b,r)/b}\Omega_k^{q-2r} }="21";
(24,-8)*+{ \bigoplus_{r \geqslant 0} \mathbb{W}_{S(a,b,r)}\Omega_k^{q-2r} }="22";
{ \ar^-{V_b} "12";"11";};
{ \ar^-{V_b} "22";"21";};
{ \ar^-{V_a} "21";"11";};
{ \ar^-{V_a} "22";"12";};
\end{xy}$$
onto the diagram~(\ref{diagramoflimits}), and this, in turn, proves
the theorem. Indeed, since the prime number $p$ does not divide 
$a$, the vertical maps $V_a$ in the diagram are injective with
canonical retraction $\frac{1}{a}F_a$, and therefore, the stated long
exact sequence follows from Theorem~\ref{excision} and
Proposition~\ref{tcthm}. 

First, in the case $k = \Fp$, we must construct, for every positive
integer $n$ and even non-negative integer $q = 2r$, a canonical
isomorphism of $\mathbb{W}(\Fp)$-modules
$$\xymatrix{
{ \mathbb{W}_{S(a,b,r)/n}(\Fp) } \ar[r]^-{f_n} &
{ \lim_R\TR_{q-\lambda(a,b,r)}^{m/n}(\Fp) } \cr
}$$
compatible with Verschiebung maps. We must also show that, for $q$ odd 
or negative, the right-hand side vanishes. To this end, we
consider the $p$-typical decompositions of the two
$\mathbb{W}(\Fp)$-modules in  question. We write $n = p^wn'$ with $n'$
prime to $p$. On the one hand, the canonical ring
isomorphism~(\ref{drwdecomposition}) takes the form
$$\xymatrix{
{ \mathbb{W}_{S(a,b,r)/n}(\Fp) } \ar[r]^-{\gamma} &
{ \prod_{e \in \N'} \mathbb{W}_{(S(a,b,r)/ne) \cap P}(\Fp). } \cr
}$$
Moreover, it follows readily from the definition of $S(a,b,r)$ that
$$\card((S(a,b,r)/ne) \cap P) = s - w$$
with $s = s(a,b,r,n,e)$ defined to be the unique integer $s > w$ such that
$$\ell(a,b,p^{s-1}n'e) \leqslant r < \ell(a,b,p^sn'e),$$
if such an integer exists, and $w$, otherwise. On the other hand,
taking limits over $m \in \N$ of the
isomorphisms~(\ref{trdecomposition}), we obtain a canonical
isomorphism
$$\xymatrix{
{ \lim_R\TR_{q-\lambda(a,b,m)}^{m/n}(\Fp) } \ar[r]^-{\gamma} &
{ \prod_{e \in \N'} \lim_R\TR_{q-\lambda(a,b,p^{v-1}n'e)}^{v-w}(\Fp;p) }
  \cr
}$$
with the limit systems on the right-hand side indexed by the set $\N$
ordered under addition. Moreover, Proposition~\ref{trprimefield} shows
that if $q = 2r$ is non-negative, then
$$\textstyle{ \lim_R\TR_{q-\lambda(a,b,p^{v-1}n'e)}^{v-w}(\Fp) } = 
\mathbb{W}_{(S(a,b,r)/ne)\cap P}(\Fp) \cdot \sigma(a,b,r,n,e)$$
with the preferred generator $\sigma(a,b,r,n,e)$ defined to be the
unique class such that
$$\pr_v(\sigma(a,b,r,n,e)) = \sigma(r,\lambda(a,b,p^vn'e),v-w)$$
for $v \geqslant s(a,b,r,n,e)$. It also follows from
Proposition~\ref{trprimefield} that the limit in question is zero, if
$q$ is odd or negative. We conclude that, if $q = 2r$ is non-negative
and even, then there is an isomorphism of $\mathbb{W}(\Fp)$-modules
$$\xymatrix{
{ \mathbb{W}_{S(a,b,r)/n}(\Fp) } \ar[r]^-{f_n} &
{ \lim_R\TR_{q-\lambda(a,b,r)}^{m/n}(\Fp) } \cr
}$$
that to $1$ associates the class $\sigma(a,b,r,n)$ defined by
$$\gamma(\sigma(a,b,r,n)) = (\sigma(a,b,r,n,e))_{e \in \N'},$$
and that if $q$ is odd or negative, then the limit is zero. It remains
to show that the isomorphisms $f_n$ are comtable with Verschiebung
operators. By the projection formula, this is equivalent to showing
that if $n = st$ then the map 
$$\xymatrix{
{ \lim_R \TR_{2r-\lambda(a,b,m)}^{m/t}(\Fp) } \ar[r]^-{F_s} &
{ \lim_R \TR_{2r-\lambda(a,b,m)}^{m/n}(\Fp) } \cr
}$$
takes $\sigma(a,b,r,t)$ to $\sigma(a,b,r,n)$. We write $s = p^is'$ and
$t = p^jt'$ with $s'$ and $t'$ prime to $p$ such that $n' = s't'$
and $w = i+j$. It suffices to show 
that for all $e \in s'\hskip1pt\N'$, 
$$\xymatrix{
{ \lim_R \TR_{2r-\lambda(a,b,p^{v-1}n'e)}^{v-j}(\Fp;p) } \ar[r]^{F^i} &
{ \lim_R \TR_{2r-\lambda(a,b,p^{v-1}n'e)}^{v-w}(\Fp;p) } \cr
}$$
takes $\sigma(a,b,r,t,e)$ to $\sigma(a,b,r,n,e/s')$ which, in turn,
translates to showing that
$$\xymatrix{
{ \TR_{2r-\lambda(a,b,p^{v-1}n'e)}^{v-j}(\Fp;p) } \ar[r]^{F^i} &
{ \TR_{2r-\lambda(a,b,p^{v-1}n'e)}^{v-w}(\Fp;p) } \cr
}$$
takes $\sigma(r,\lambda(a,b,p^vt'e),v-j)$ to
$\sigma(r,\lambda(a,b,p^vt'e),v-w)$. But this was proved
in~Proposition~\ref{trprimefield}, so the proof of the theorem for $k
= \Fp$ is complete.

Next, if $k$ is a regular $\Fp$-algebra, we consider 
$\TR_{*-\lambda(a,b,m)}^{m/n}(k)$ as a graded module over the graded ring
$\mathbb{W}\,\Omega_k^* = \lim_R \mathbb{W}_S\Omega_k^*$ via the
composition
$$\xymatrix{
{ \mathbb{W}\,\Omega_k^* } \ar[r] &
{ \mathbb{W}_{\langle m/n \rangle}\Omega_k^* } \ar[r] &
{ \TR_*^{m/n}(k)\phantom{{}_k^*} } \cr
}$$
of the canonical projection and the unique map of Witt
complexes. Since the latter is compatible with restriction maps, the
limit $\lim_R\TR_{*-\lambda(a,b,m)}^{m/n}(k)$ inherits a graded
$\mathbb{W}\,\Omega_k^*$-module structure. Hence, the map of graded
$\mathbb{W}(\Fp)$-modules
$$\xymatrix{
{ \lim_R \TR_{* - \lambda(a,b,m)}^{m/n}(\Fp) } \ar[r]^-{\iota} &
{ \lim_R \TR_{* - \lambda(a,b,m)}^{m/n}(k) } \cr
}$$
induced by the unit map $\iota \colon \Fp \to k$ extends to a map of
graded $\mathbb{W}\,\Omega_k^*$-modules
$$\xymatrix{
{ \mathbb{W}\,\Omega_k^* \otimes_{\mathbb{W}(\Fp)} \lim_R \TR_{* - \lambda(a,b,m)}^{m/n}(\Fp) } \ar[r] &
{ \lim_R \TR_{* - \lambda(a,b,m)}^{m/n}(k). } \cr
}$$
Considering the $p$-typical decomposition of this map, we conclude
from Theorem~\ref{trregular} and from the same cardinality counting
argument as above that it factors through an isomorphism
of graded $\mathbb{W}\,\Omega_k^*$-modules
$$\xymatrix{
{ \bigoplus_{r \geqslant 0} \mathbb{W}_{S(a,b,r)/n}\Omega_k^{*-2r} }
\ar[r]^-{f_n} &
{ \lim_R\TR_{*-\lambda(a,b,m)}^{m/n}(k), } \cr
}$$
the $r$th component of which is given by
$$f_{n,r}(\omega) = \tilde{\omega} \cdot \iota(\sigma(a,b,r,n))$$
with $\tilde{\omega} \in \mathbb{W}\,\Omega_k^{q-2r}$ any choice of lift of
$\omega \in \mathbb{W}_{S(a,b,m)/n}\Omega_k^{q-2r}$. Finally, the projection
formula again shows that the isomorphisms $f_n$ are compatible with
the respective Verschiebung maps. This completes the proof.
\end{proof}

\section{Homology calculations}\label{homologysection}

In this section, we calculate the reduced (cellular) homology
groups of the pointed $\mathbb{T}$-spaces $N^{\cy}(\langle t \rangle,
\langle t^a,t^b \rangle;m)$ and determine Connes' operator 
$$\xymatrix{
{ \tilde{H}_q(N^{\cy}(\langle t \rangle, \langle t^a,t^b \rangle;m)) }
\ar[r]^-{d} &
{ \tilde{H}_{q+1}(N^{\cy}(\langle t \rangle, \langle t^a,t^b
  \rangle;m)) } \cr
}$$
that is defined to be the composition of the cross product with the
fundamental class $[\mathbb{T}]$ and the map of reduced homology
groups induced by the action map
$$\xymatrix{
{ \mathbb{T}_+ \wedge N^{\cy}(\langle t \rangle, \langle t^a,t^b
  \rangle;m) } \ar[r]^-{\mu} &
{ N^{\cy}(\langle t \rangle, \langle t^a,t^b \rangle;m). }
}$$
This amounts to a reinterpretation of the calculation
in~\cite{bach1,larsen} of the Hochschild homology groups of the rings
$A = \Z[x,y]/(x^b - y^a)$ and $B = \Z[t]$ and of Connes' operator on
these groups. The final outcome of the calculation is
Corollary~\ref{homologyagrees} which shows that
Conjecture~\ref{mainconjecture} holds as far as homology is
concerned. We begin by recalling the situation in more generality,
following the choices concerning signs etc.~made 
in~\cite[Section~2, Appendix]{hm4}. 

Let $k$ be a commutative ring and let $R$ be a $k$-algebra. The
associated Hochschild complex is defined to be the cyclic $k$-module
$HH(R/k)[-]$ whose $k$-module of $q$-simplices is the $(q+1)$-fold
tensor product
$$\HH(R/k)[q] = R \otimes_k \dots \otimes_k R$$
and whose cyclic structure maps are given as follows.
$$\begin{aligned}
d_i(a_0 \otimes \dots \otimes a_q) & = \begin{cases}
a_0 \otimes \dots \otimes a_ia_{i+1} \otimes \dots \otimes a_q &
\hskip11mm\text{for $0 \leqslant i < q$} 
\cr
a_qa_0 \otimes a_1 \otimes \dots \otimes a_{q-1} & \hskip11mm\text{for $i = q$} \cr
\end{cases} \cr
s_i(a_0 \otimes \dots \otimes a_q) & = a_0 \otimes \dots \otimes a_i
\otimes 1 \otimes a_{i+1} \otimes \dots \otimes a_q \hskip7.2mm
\text{for $0 \leqslant i \leqslant q$} \cr
t_q(a_0 \otimes \dots \otimes a_q) & = a_q \otimes a_0 \otimes \dots
\otimes a_{q-1} \cr
\end{aligned}$$
The homology groups of the associated chain complex $\HH(R/k)$ are
called the Hochschild homology groups of $R/k$ and denoted
$\HH_q(R/k)$.
Let
$$R^e = R \otimes_k R^{\op}$$
be the enveloping algebra of $R$ over $k$. The
two-sided bar-construction of $R$ over $k$ is defined to be the
simplicial left $R^e$-module $B(R,R,R)[-]$ whose $R^e$-module of
$q$-simplices is the $(q+2)$-fold tensor product 
$$B(R,R,R)[q] = R \otimes_k \dots \otimes_k R$$
with the left multiplication by $a \otimes a' \in R^e$ defined by
$$(a \otimes a') \cdot a_0 \otimes a_1 \otimes \dots \otimes a_q \otimes
a_{q+1} = aa_0 \otimes a_1 \otimes \dots \otimes a_q \otimes a_{q+1}a'$$ 
and whose simplicial structure maps are given by
$$\begin{aligned}
d_i(a_0 \otimes \dots \otimes a_{q+1}) & = a_0 \otimes \dots \otimes
a_ia_{i+1} \otimes \dots \otimes a_{q+1} \cr
s_i(a_0 \otimes \dots \otimes a_{q+1}) & = a_0 \otimes \dots \otimes
a_i \otimes 1 \otimes a_{i+1} \otimes \dots \otimes a_{q+1} \cr
\end{aligned}$$
with $0 \leqslant i \leqslant q$. We view $R$ as a right $R^e$-module
(resp. left $R^e$-module) with the multiplication by $a \otimes a' \in
R^e$ defined by $x \cdot (a \otimes a') = a'xa$ (resp.~by 
$(a \otimes a') \cdot x = axa'$) and recall the canonical isomorphism
of simplicial $k$-modules
$$\xymatrix{
{ R \otimes_{R^e} B(R,R,R)[-] } \ar[r]^-{v} &
{ \HH(R/k)[-] } \cr
}$$
that to $x \otimes_{R_e} (a_0 \otimes \dots \otimes a_{q+1})$ associates
$a_{q+1}xa_0 \otimes a_1 \otimes \dots \otimes a_q$. The homology
of the associated chain complex $R \otimes_{R^e}B(R,R,R)$ of the
left-hand simplicial $k$-module is interpreted homologically as
follows. The augmentation
$$\xymatrix{
{ B(R,R,R)[-] } \ar[r]^-{\epsilon} &
{ R } \cr
}$$
that takes $a_0 \otimes a_1 \in B(R,R,R)[0]$ to $a_0a_1 \in R$ is a
weak equivalence of simplicial left $R^e$-modules, where $R$ is
considered as a constant simplicial left $R^e$-module. Indeed, the
map of the underlying simplicial $k$-modules
$$\xymatrix{
{ R } \ar[r]^-{\eta} &
{ B(R,R,R)[-] } \cr
}$$
defined by $\eta(a) = 1 \otimes a$ satisfies $\varepsilon \circ \eta =
\id_R$ and the $k$-linear map
$$\xymatrix{
{ B(R,R,R)[q] } \ar[r]^-{s_{-1}} &
{ B(R,R,R)[q+1] } \cr
}$$
that takes $a_0 \otimes \dots \otimes a_{q+1}$ to $1 \otimes a_0 \otimes
\dots \otimes a_{q+1}$ is a $k$-linear chain homotopy from the
identity map of $B(R,R,R)[-]$ to the composite $\eta \circ \epsilon$.
Moreover, if $R$ is flat over $k$, then the left $R^e$-modules
$B(R,R,R)[q]$ are flat. Therefore, in this case, the isomorphism $v$
above gives rise to a canonical isomorphism
$$\xymatrix{
{ \Tor_q^{R^e}(R,R) } \ar[r]^-{v} &
{ \HH_q(R/k). } \cr
}$$
We also recall from~\cite[Expos\'{e}~7]{cartan} that if $R$ is
commutative, then the chain complex $B(R,R,R)$ equipped with the
shuffle product is a strictly anti-symmetric differential graded
$R^e$-algebra. Moreover, the subalgebra of elements of even degrees
carries a canonical divided power structure on the ideal of elements
of positive degree. This makes $\HH_*(R/k)$ a strictly
anti-symmetric graded $R$-algebra such that the subalgebra of even
degree elements carries a canonical divided power structure on the
ideal of elements of positive degree. Finally, if $R$ is
any $k$-algebra, we define
$$\xymatrix{
{ \HH(R/k)[q] } \ar[r]^-{d} &
{ \HH(R/k)[q+1] } \cr
}$$
to be the $k$-linear map given by
$$d(a_0 \otimes \dots \otimes a_q) = \sum_{0 \leqslant i \leqslant n}
(-1)^{ni} 1 \otimes a_i \otimes \dots \otimes a_n \otimes a_0 \otimes
\dots \otimes a_{i-1}.$$
The induced map of homology groups is Connes' operator
$$\xymatrix{
{ \HH_q(R/k) } \ar[r]^-{d} &
{ \HH_{q+1}(R/k). } \cr
}$$
It is a differential and, for $R$ commutative, a graded
derivation. Hence, if $R$ is a commutative $k$-algebra, then we have
the unique map
$$\xymatrix{
{ \Omega_{R/k}^q } \ar[r]^-{\eta} &
{ \HH_q(R/k) } \cr
}$$
from the initial strictly anti-symmetric differential graded
$k$-algebra with underlying $k$-algebra $R$. It is an isomorphism for $q \leqslant
1$, and if the unit map $k \to R$ is a smooth (or,
more generally, regular) morphism, then it is an isomorphism for
all integers $q$ by the Hochschild-Kostant-Rosenberg
theorem~\cite[Theorem~9.4.7]{weibel1} (and by the approximation
theorem of Popescu~\cite{popescu,swan}).

If $M$ is a monoid, then there is a canonical isomorphism of cyclic
$k$-modules
$$\xymatrix{
{ k[N^{\cy}(M)[-]] } \ar[r]^-{w} &
{ \HH(k[M]/k)[-] } \cr
}$$
that takes the generator $(x_0,\dots,x_q)$ to the element 
$x_0 \otimes \dots \otimes x_q$. It induces a canonical isomorphism
from the cellular homology groups of with $k$-coefficients of
$N^{\cy}(M)$ to the Hochschild homology groups of $k[M]/k$, and the
diagram
$$\xymatrix{
{ \tilde{H}_q(N^{\cy}(M);k) } \ar[r]^-{w} \ar[d]^-{d} &
{ \HH_q(k[M]/k) } \ar[d]^-{d} \cr
{ \tilde{H}_{q+1}(N^{\cy}(M);k) } \ar[r]^-{w} &
{ \HH_{q+1}(R/k) } \cr
}$$
commutes. In fact, the corresponding diagram of associated
normalized chain groups and maps commutes; see the proof
of~\cite[Proposition~1.4.5]{h}.

We now return to the task at hand where we consider the following
cofibration sequence of pointed $\mathbb{T}$-spaces.
$$\xymatrix{
{ N^{\cy}(\langle x,y \rangle)_+ } \ar[r]^-{f} &
{ N^{\cy}(\langle t \rangle)_+ } \ar[r]^-{p} &
{ N^{\cy}(\langle t \rangle, \langle t^a,t^b \rangle) }
\ar[r]^-{\partial} &
{ \Sigma N^{\cy}(\langle x,y \rangle)_+ } \cr
}$$
The canonical isomorphism $w$ defined above identifies the cellular
homology groups with $k$-coefficients of the two left-hand terms with
the Hochschild homology groups of the $k$-algebras $A = k[x,y]/(y^a -
x^b)$ and $B = k[t]$, respectively, and identifies the map of cellular
homology groups induced by $f$ with the map of Hochschild homology
groups induced the $k$-algebra homomorphism $f \colon A \to B$.
Moreover, the cofibration sequence above decomposes as a wedge sum
indexed by non-negative integers $m$ of the cofibration sequences of
the corresponding weight $m$ pieces. 

The Hochschild homology groups $\HH_q(A/k)$ were evaluated
in~\cite{bach1} and Connes' operator on these groups was evaluated
in~\cite{larsen}. We recall the result, but follow the choices of
signs and definitions made in~\cite[Section~2, Appendix]{hm4} which
differ from those of~\cite{bach1} and~\cite{larsen} which, in turn,
differ from one another. We  consider the differential graded
algebra $A^e$-algebra
$$R(A) = A^e \otimes_k \Lambda_k\{dx,dy\} \otimes_k \Gamma_k\{z\},$$
where $dx$ and $dy$ are exterior generators of degree $1$ and $z$ a
divided power generator of degree $2$, and where the differential
$\delta$ maps
$$\begin{aligned}
{} & \delta(dx) = x \otimes 1 - 1 \otimes x, \hskip10mm
\delta(dy) = y \otimes 1 - 1 \otimes y, \cr
{} & \delta(z) = \frac{x^b \otimes 1 - 1 \otimes x^b}{x \otimes 1 - 1
  \otimes x} \cdot dx - \frac{y^a \otimes 1 - 1 \otimes y^a}{y \otimes
  1 - 1 \otimes y} \cdot dy \cr
\end{aligned}$$
and satisfies $\delta(z^{[r]}) = z^{[r-1]}\delta(z)$ for all $r \geqslant
1$. Here $z^{[r]}$ is the $r$th divided power of $z$. The augmentation
$\varepsilon_R \colon R(A) \to A$ that takes $a \otimes a'$ to $aa'$
is a resolution of the left $A^e$-module $A$ by free left
$A^e$-modules. Therefore, the groups $\HH_*(A/k)$ are canonically
isomorphic to the homology groups of the differential graded
$A$-algebra $\bar{R}(A) = A \otimes_{A^e} R(A)$, where the
differential $\delta$ annihilates $dx$ and $dy$, maps $z$ to
$$\delta(z) = bx^{b-1}dx - ay^{a-1}dy,$$
and satisfies $\delta(z^{[r]}) = z^{[r-1]}\delta(z)$ for all 
$r \geqslant 1$. The isomorphism is given as follows. We choose a map
of chain complexes of left $A^e$-modules
$$\xymatrix{
{ R(A) } \ar[r]^-{h} &
{ B(A,A,A) } \cr
}$$
such that $h \circ \varepsilon = \varepsilon_R$. The map $h$ is
uniquely determined, up to chain homotopy, and is a chain homotopy
equivalence. Hence, the composite chain map
$$\xymatrix{
{ \bar{R}(A) } \ar@{=}[r] &
{ A \otimes_{A^e} R(A) } \ar[r]^-{\id \otimes h} &
{ A \otimes_{A^e} B(A,A,A) } \ar[r]^-{v} &
{ \HH(A/k) } \cr
}$$
induces an isomorphism of homology groups which is independent of the
choice of the map $h$.  The standard choice of $h$ is the unique map
of differential graded $A^e$-algebras that preserves divided powers
and satisfies
$$\begin{aligned}
{} & h(dx) = s_{-1}(x \otimes 1 - 1 \otimes x), \hskip10mm 
h(dy) = s_{-1}(y \otimes 1 - 1 \otimes y), \cr
{} & h(z) = s_{-1}\big(\frac{x^b \otimes 1 - 1 \otimes x^b}{x \otimes 1 -
  1 \otimes x} \cdot h(dx) - \frac{y^a \otimes 1 - 1 \otimes y^a}{y
  \otimes 1 - 1 \otimes y} \cdot h(dy) \big) \cr
\end{aligned}$$
with $s_{-1}$ the $k$-linear chain homotopy defined
earlier. Following~\cite[(2.8.5)]{larsen}, we also define a $k$-linear
map
$$\xymatrix{
{ \bar{R}(A)[q] } \ar[r]^-{d_R} &
{ \bar{R}(A)[q+1] } \cr
}$$
as follows. We first choose the basis of the free $k$-module $\bar{R}(A)$ that consists of the elements $x^iy^jz^{[r]}$, $x^iy^jdx
z^{[r]}$, $x^iy^jdy z^{[r]}$, and $x^iy^jdx dy z^{[r]}$, where
$(i,j,r)$ is a triple of non-negative integers and $i < b$, and next
define $d_R$ on these elements by
$$\begin{aligned}
{} & d_R(x^iy^jz^{[r]}) = \begin{cases}
0 & \text{if $i = 0$ and $j = 0$} \cr
(i+br)x^{i-1}dx z^{[r]} & \text{if $i \geqslant 1$ and $j = 0$} \cr
jy^{j-1}dy z^{[r]}& \text{if $i = 0$ and $j \geqslant 1$} \cr
(i+br)x^{i-1}y^jdx z^{[r]} + jx^iy^{j-1}dy z^{[r]} & \text{if $i \geqslant 1$ and 
$j \geqslant 1$} \cr 
\end{cases} \cr
{} & d_R(x^iy^j(dx)^{\varepsilon_x}(dy)^{\varepsilon_y}z^{[r]}) =
d_R(x^iy^jz^{[r]})(dx)^{\varepsilon_x}(dy)^{\varepsilon_y}. \cr
\end{aligned}$$
The reader will notice the lack in symmetry in the definition of the
map $d_R$ and, not surprisingly, the map is not a derivation and the diagram
$$\begin{xy}
(-19,7)*+{ \bar{R}(A)[q] }="a";
(19,7)*+{ \HH(A/k)[q] }="b";
(-19,-7)*+{ \bar{R}(A)[q+1] }="c";
(19,-7)*+{ \HH(A/k)[q+1] }="d";
{ \ar^-{v \circ (\id \otimes h)} "b";"a";};
{ \ar^-{d_R} "c";"a";};
{ \ar^-{v \circ (\id \otimes h)} "d";"c";};
{ \ar^-{d} "d";"b";};
\end{xy}$$
does not commute. Nevertheless, by using the notion of strongly
homotopy $k$-linear maps introduced in~\cite{gugenheimmunkholm}, it is
proved in~\cite[\S1]{larsen} that the map of homology groups induced
by $d_R$ is a derivation and that the diagram of homology groups
$$\begin{xy}
(-18,7)*+{ H_q(\bar{R}(A)) }="a";
(18,7)*+{ \HH_q(A/k) }="b";
(-18,-7)*+{ H_{q+1}(\bar{R}(A)) }="c";
(18,-7)*+{ \HH_{q+1}(A/k) }="d";
{ \ar^-{v \circ (\id \otimes h)} "b";"a";};
{ \ar^-{d_R} "c";"a";};
{ \ar^-{v \circ (\id \otimes h)} "d";"c";};
{ \ar^-{d} "d";"b";};
\end{xy}$$
does commute. Hence, we can use the map $d_R$ to evaluate Connes'
operator.

The Hochschild homology groups of $B = k[t]$ are calculated in a
similar way; this is the starting point of the proof of the
Hochschild-Kostant-Rosenberg theorem. To this end, we consider the
differential graded $B^e$-algebra
$$R(B) = B^e \otimes_k \Lambda_k\{dt\},$$
where $dt$ is of degree $1$, and where the differential $\delta$ maps
$\delta(dt) = t \otimes 1 - 1 \otimes t$. The augmentation
$\varepsilon_R \colon R(B) \to B$ defined by $\varepsilon_R(b \otimes
b') = bb'$ is a resolution of the left $B^e$-module $B$ by free left
$B^e$-modules. We define
$$\xymatrix{
{ R(B) } \ar[r]^-{h} &
{ B(B,B,B) } \cr
}$$
to be the unique map of differential graded $B^e$-algebras that maps the
generator $dt$ to $s_{-1}(t \otimes 1 - 1 \otimes t)$. It satisfies
$\varepsilon \circ h = \varepsilon_R$, and hence, the map
$$\xymatrix{
{ \bar{R}(B) } \ar@{=}[r] &
{ B \otimes_{B^e}R(B) } \ar[r]^-{\id \otimes h} &
{ B \otimes_{B^e}B(B,B,B) } \ar[r]^-{v} &
{ \HH(B/k) } \cr
}$$
is a chain homotopy equivalence, the chain homotopy class of which is
independent on our choice of the augmentation preserving chain map
$h$. We also define
$$\xymatrix{
{ \bar{R}(B)[q] } \ar[r]^-{d_R} &
{ \bar{R}(B)[q+1] } \cr
}$$
to be the $k$-linear derivation given by $d_R(t^m) = mt^{m-1}dt$ and
note that the following diagram of homology groups commutes, despite
the failure of the corresponding diagram of chain groups and maps to
do so.
$$\begin{xy}
(-18,7)*+{ H_q(\bar{R}(B)) }="a";
(18,7)*+{ \HH_q(B/k) }="b";
(-18,-7)*+{ H_{q+1}(\bar{R}(B)) }="c";
(18,-7)*+{ \HH_{q+1}(B/k) }="d";
{ \ar^-{v \circ (\id \otimes h)} "b";"a";};
{ \ar^-{d_R} "c";"a";};
{ \ar^-{v \circ (\id \otimes h)} "d";"c";};
{ \ar^-{d} "d";"b";};
\end{xy}$$
Since the differential $\delta$ in $\bar{R}(B)$ is trivial, we find
that $\eta \colon \Omega_{B/k}^q \to H_q(\bar{R}(B))$ is an
isomorphism. Here the target is considered as a strictly
anti-symmetric differential graded $k$-algebra with respect to $d_R$.

Finally, we define $f_R \colon R(A) \to R(B)$ to be the unique map of
differential graded $k$-algebras that, in degree $0$, is given by $f^e
\colon A^e \to B^e$ and that maps
$$f_R(dx) = \frac{t^a \otimes 1 - 1 \otimes t^a}{t \otimes 1 - 1 \otimes
  t}dt, \hskip9mm
f_R(dy) = \frac{t^b \otimes 1 - 1 \otimes t^b}{t \otimes 1 - t \otimes
  t}dt.
$$
In this situation, although the diagram of chain groups and maps,
which is depicted on the left-hand side below, fails to commute, the
induced diagram of homology groups,  depicted on the right-hand side,
does commute.
$$\xymatrix{
{ \bar{R}(A) } \ar[r]^-{f_R} \ar[d]^-{v \circ (\id \otimes h)} &
{ \bar{R}(B) } \ar[d]^-{v \circ (\id \otimes h)} &&
{ H_q(\bar{R}(A)) } \ar[r]^-{f_R} \ar[d]^-{v \circ (\id
  \otimes h)} &
{ H_q(\bar{R}(B)) } \ar[d]^-{v \circ (\id \otimes h)} \cr
{ \HH(A/k) } \ar[r]^-{f} &
{ \HH(B/k) } &&
{ \HH_q(A/k) } \ar[r]^-{f_R} &
{ \HH_q(B/k) } \cr
}$$
We use the right-hand diagram to evaluate the reduced cellular
homology groups of the pointed $\mathbb{T}$-spaces 
$N^{\cy}(\langle t \rangle, \langle t^a,t^b \rangle;m)$ and the action
of Connes' operator.

\begin{prop}\label{homologycalculation}Let $1 < a < b$ be relatively
prime integers and let $m$ be a positive integer.
\begin{enumerate}
\item[{\rm (1)}]If neither $a$ nor $b$ divides $m$, then 
$\tilde{H}_q(N^{\cy}(\langle t \rangle, \langle t^a,t^b \rangle;m);
\Z)$
is a free abelian group of rank $1$ if $q = 2\ell(a,b,m)$ or $q =
2\ell(a,b,m)+1$, and is zero otherwise. In addition, if $q =
2\ell(a,b,m)$, then Connes' operator
$$\xymatrix{
{ \tilde{H}_q(N^{\cy}(\langle t \rangle, \langle t^a,t^b \rangle;m);
  \Z) } \ar[r]^-{d} &
{ \tilde{H}_{q+1}(N^{\cy}(\langle t \rangle, \langle t^a,t^b \rangle;m);
  \Z) } \cr
}$$
takes a generator of the domain to $m$ times a generator of the
target. 
\item[{\rm (2)}]If $a$ but not $b$ divides $m$, then 
$\tilde{H}_q(N^{\cy}(\langle t \rangle, \langle t^a,t^b \rangle;m);
\Z)$ is cyclic of order $a$, if $q = 2\ell(a,b,m)+1$, and is zero, otherwise. 
\item[{\rm (3)}]If $b$ but not $a$ divides $m$, then 
$\tilde{H}_q(N^{\cy}(\langle t \rangle, \langle t^a,t^b \rangle;m);
\Z)$ is cyclic of order $b$, if $q = 2\ell(a,b,m)+1$, and is zero,
otherwise.
\item[{\rm (4)}]If both $a$ and $b$ divide $m$, then 
$\tilde{H}_q(N^{\cy}(\langle t \rangle, \langle t^a,t^b \rangle;m);
\Z)$ is zero, for all $q$. 
\end{enumerate}
\end{prop}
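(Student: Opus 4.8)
The plan is to turn the statement into an explicit computation with the two complexes $\bar{R}(A)$ and $\bar{R}(B)$ set up above, and then to quote~\cite{bach1,larsen}. First I would use the isomorphism $w$ together with the decomposition into weights $m$ and the cofibration sequence recorded in the section to identify, for each positive integer $m$, the reduced cellular chain complex of $N^{\cy}(\langle t\rangle,\langle t^a,t^b\rangle;m)$ with $\Z$--coefficients with the mapping cone of the weight--$m$ component $f_R\colon\bar{R}(A)_m\to\bar{R}(B)_m$, and likewise to identify Connes' operator with the operator induced by $d_R$. So it suffices to compute the homology of a cone of an explicit map of finitely generated free $\Z$--complexes. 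Here $\bar{R}(B)_m$ is $\Z\cdot t^m$ in degree $0$, $\Z\cdot t^{m-1}dt$ in degree $1$, zero elsewhere, with vanishing differential, while $f_R$ carries $x^iy^j\mapsto t^m$, $x^iy^j\,dx\mapsto a\,t^{m-1}dt$, $x^iy^j\,dy\mapsto b\,t^{m-1}dt$, and kills every monomial involving a divided power $z^{[r]}$ with $r\geqslant 1$ or the product $dx\,dy$.

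Next I would record $\HH_*(A/\Z)$ and $\HH_*(B/\Z)$ and the map $f_*$. One has $\HH_*(B/\Z)=\Omega^*_{B/\Z}$, so $\HH_q(B/\Z)_m$ is $\Z$ for $q\in\{0,1\}$ and zero otherwise. For $A$ the complex $\bar{R}(A)$ splits as a direct sum over $r\geqslant 0$ of Koszul--type strands
$$z^{[r]}A\ \longrightarrow\ z^{[r-1]}A\,dx\oplus z^{[r-1]}A\,dy\ \longrightarrow\ z^{[r-2]}A\,dx\,dy$$
(first map $\xi\mapsto(bx^{b-1}\xi,-ay^{a-1}\xi)$, second map $(\xi,\zeta)\mapsto ay^{a-1}\xi+bx^{b-1}\zeta$), which recovers~\cite{bach1}: $\HH_0(A/\Z)=A$, $\HH_1(A/\Z)=\Omega^1_{A/\Z}$, and in higher degrees $\HH_*(A/\Z)$ is built from $A/(bx^{b-1},ay^{a-1})$ and the first Koszul homology of $(bx^{b-1},ay^{a-1})$. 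Reading off the weight--$m$ part, Lemma~\ref{sylvester} and Remark~\ref{conductor} show that $\bar{R}(A)_m$ is concentrated in homological degrees $\leqslant 2\ell(a,b,m)+2$ and that the free rank and the torsion in each degree are governed by which of $m,\,m-a,\,m-b,\,m-a-b,\,m-ab,\dots$ are representable by non--negative combinations of $a$ and $b$. Since $f_R$ is zero in degrees $\geqslant 2$, the long exact sequence of the pair gives $\tilde H_q\cong\operatorname{coker}(f_*\colon\HH_q(A/\Z)_m\to\HH_q(B/\Z)_m)$ glued to $\ker(f_*)$ and to $\HH_{q-1}(A/\Z)_m$; the surjectivity of $f_*$ on $\HH_0$ precisely when $m$ is representable, and the description of its image on $\HH_1$ as the subgroup of $\Z\cdot t^{m-1}dt$ generated by $a\,t^{m-1}dt$ and $b\,t^{m-1}dt$, make all the low--degree contributions cancel except for one cyclic group. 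A numerical check, using Lemma~\ref{sylvester} and the four possibilities for divisibility of $m$ by $a$ and $b$, shows that the surviving classes occur exactly in degrees $2\ell(a,b,m)$ and $2\ell(a,b,m)+1$, with the free answer of~(1) when neither $a$ nor $b$ divides $m$ (then $m-a$ and $m-b$ are both representable, so the image on $\HH_1$ is everything); with a cyclic group of order $a$ in~(2) and $b$ in~(3) (only one of $a\,t^{m-1}dt$, $b\,t^{m-1}dt$ is hit); and with acyclicity in~(4), where $f_R$ becomes a weight--$m$ quasi--isomorphism.

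Finally, Connes' operator in case~(1). For $\ell(a,b,m)\geqslant 1$, the generator of $\tilde H_{2\ell}$ maps under the connecting map to a Koszul class $\alpha\,x^{i_1}y^{j_1}dx+\beta\,x^{i_2}y^{j_2}dy$ in $\HH_{2\ell-1}(A/\Z)_m$ with $(\alpha,\beta)$ spanning the kernel of $(p,q)\mapsto ap+bq$; applying the derivation induced by $d_R$, and using that the resulting monomials $x^{i_1}y^{j_1-1}dx\,dy$ and $x^{i_2-1}y^{j_2}dx\,dy$ coincide, the weight relations $ai_1+bj_1=m-a$ and $ai_2+bj_2=m-b$ force the coefficient to be $\pm m$; since the connecting map is injective on $\tilde H_{2\ell+1}$ in this range, this gives multiplication by $m$. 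For $\ell=0$, case~(1) forces $m$ to be non--representable, $\tilde H_0=\Z\cdot t^m$ and $\tilde H_1=\Z\cdot t^{m-1}dt$, and $d_R(t^m)=m\,t^{m-1}dt$ gives the same conclusion directly.

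The hard part is the middle paragraph: massaging the~\cite{bach1} description of $\HH_*(A/\Z)$ into a form from which the concentration in degrees $2\ell(a,b,m)$, $2\ell(a,b,m)+1$ and the exact torsion orders are visible, and carrying out the four--way case analysis uniformly in $m$ --- this is where Lemma~\ref{sylvester} is used essentially, to translate representability of $m-a$, $m-b$, $m-ab$, \dots into statements about $\ell$ and about divisibility. The determination of Connes' operator up to the factor $m$ is then bookkeeping (the only subtlety being the term $i+br$ in the formula for $d_R$ when $\ell>0$, which must be carried through the weight count), with signs controlled by the conventions fixed following~\cite[Section~2, Appendix]{hm4}.
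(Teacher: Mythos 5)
Your route is the same as the paper's: identify the weight-$m$ cellular homology of the quotient with Hochschild homology via $w$, run the long exact sequence of the cofibration $N^{\cy}(\langle x,y\rangle;m)_+ \to N^{\cy}(\langle t\rangle;m)_+ \to N^{\cy}(\langle t\rangle,\langle t^a,t^b\rangle;m)$, compute with the complexes $\bar{R}(A)$, $\bar{R}(B)$ and the maps $f_R$, $d_R$ of \cite{bach1,larsen}, and extract the factor $m$ in Connes' operator by applying $d_R$ to the Koszul class (the paper's computation $d_R(b\,x^{i-1}y^jdx\,z^{[r]}-a\,x^iy^{j-1}dy\,z^{[r]})=-m\,x^{i-1}y^{j-1}dxdy\,z^{[r]}$, where the $br$ term you flag is exactly what makes the coefficient $m$ rather than $ai+bj$). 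One caveat on the setup: the identification of the cellular chains of the quotient with ``the mapping cone of $f_R$ carrying the operator induced by $d_R$'' is not available at the chain level — the squares comparing $f_R$ with $f$ and $d_R$ with $d$ commute only after passing to homology (this is the strongly homotopy linear argument of \cite{larsen} the paper leans on) — so the argument has to be run, as you in effect do later, through the homology long exact sequence together with the fact that $f$ and $p$ commute and $\partial$ anticommutes with Connes' operator; the latter compatibility is used but never stated in your sketch.

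The genuine gap is the middle paragraph, which you yourself label the hard part: the content of the proposition is precisely the determination of the weight-$m$ homology of $\bar{R}(A)$ — its degrees of concentration, ranks, and torsion orders — in the four divisibility cases, and you assert this as the outcome of a ``numerical check'' without performing it; in the paper this is the bulk of the proof (write $m=ai+bj+abr$, list the finitely many weight-$m$ basis monomials, compute kernels and cokernels case by case). Moreover, the one reason you do offer for the torsion orders in cases (2)--(3), namely that ``only one of $a\,t^{m-1}dt$, $b\,t^{m-1}dt$ is hit,'' is wrong as soon as $\ell(a,b,m)\geqslant 1$: writing $m=ai+abr$ with $0<i<b$ and $r\geqslant 1$, the class $d\cdot x^{i-1}y^{ar}dx-c\cdot x^iy^{ar-1}dy$ maps to $t^{m-1}dt$, so $f_*$ is onto $\HH_1(B/\Z)_m$ and the cokernel there vanishes; the cyclic group of order $a$ instead appears as $\HH_{2\ell}(A/\Z)_m$, generated by $x^{i-1}y^{a-1}dxdy\,z^{[r-1]}$ and killed by $a$ because $\delta(x^{i-1}dx\,z^{[r]})=\pm a\,x^{i-1}y^{a-1}dxdy\,z^{[r-1]}$, and it is transported to $\tilde H_{2\ell+1}$ by the boundary map since $\HH_q(B/\Z)_m=0$ for $q\geqslant 2$. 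With that correction, and with the four-way computation actually carried out, your plan reproduces the paper's proof; as written, the decisive computation is missing and the heuristic replacing it misidentifies where the torsion comes from.
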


\begin{proof}We consider the long exact sequence of reduced cellular
homology groups associated with the weight $m$ summand of the
cofibration sequence
$$\xymatrix{
{ N^{\cy}(\langle x,y \rangle)_+ } \ar[r]^-{f} &
{ N^{\cy}(\langle t \rangle)_+ } \ar[r]^-{p} &
{ N^{\cy}(\langle t \rangle, \langle t^a,t^b \rangle) }
\ar[r]^-{\partial} &
{ \Sigma N^{\cy}(\langle x,y \rangle)_+. } \cr
}$$
In this sequence, the maps induced by $f$ and $p$ commute with Connes'
operator $d$ and the boundary map anti-commutes with $d$. We
evaluate the groups and maps in the long exact sequence by means of
the commutative diagram
$$\xymatrix{
{ H_q(N^{\cy}(\langle x,y \rangle;m);\Z) } \ar[r]^-{f} \ar[d]^-{w} &
{ H_q(N^{\cy}(\langle t \rangle;m);\Z) } \ar[d]^-{w} \cr
{ \HH_q(A;m) } \ar[r]^-{f} &
{ \HH_q(B;m) } \cr
{ H_q(\bar{R}(A;m)) } \ar[r]^-{f_R} \ar[u]_{v \circ (\id \otimes h)} &
{ H_q(\bar{R}(B;m)) } \ar[u]_{v \circ (\id \otimes h)} \cr
}$$
in which the vertical maps are isomorphisms. 

We begin with the case~(1) where neither $a$ nor $b$ divides $m$. If
$\ell(a,b,m) = 0$, then $N^{\cy}(\langle x,y \rangle;m)$ is the empty
space, and hence, the map 
$$\xymatrix{
{ H_q(N^{\cy}(\langle t \rangle;m);\Z) } \ar[r]^-{p} &
{ H_q(N^{\cy}(\langle t \rangle, \langle t^a,t^b \rangle;m);\Z) } \cr
}$$
is an isomorphism. The left-hand group is free abelian of rank $1$, if
$q = 0$ or $q = 1$, and is zero, otherwise. Moreover, Connes' operator
maps the generator $t^m$ in degree $q = 0$ to $m$ times the generator
$t^{m-1}dt$ in degree $q = 1$. This proves the statement for
$\ell(a,b,m) = 0$.  If
$\ell(a,b,m) = r + 1 \geqslant 1$, then we write $m = ai + bj + abr$
with $0 < i < b$ and $0 <  j < a$. In this case, the complex
$\bar{R}(A;m)$ is freely generated, as a graded abelian group, by the
homogeneous elements 
$$x^iy^{j+as}z^{[r-s]}, \hskip2mm
x^{i-1}y^{j+as}dx z^{[r-s]}, \hskip2mm
x^iy^{j+as-1}dy z^{[r-s]}, \hskip2mm
x^{i-1}y^{j+as-1}dxdy z^{[r-s]}$$
with $0 \leqslant s \leqslant r$. If $r = 0$, then the complex has
zero differential, and hence, the homology groups are free abelian
on the homology classes of the cycles
$$x^iy^j, \hskip2mm x^{i-1}y^jdx, \hskip2mm x^iy^{j-1}dy, \hskip2mm
x^{i-1}y^{j-1}dxdy$$
of degrees $0$, $1$, $1$, and $2$, respectively. Similarly, the complex
$\bar{R}(B;m)$ has free abelian homology groups generated by the
classes of the cycles $t^m$ and $t^{m-1}dt$ of degrees $0$ and $1$,
respectively. Moreover, the map 
$$\xymatrix{
{ H_q(\bar{R}(A;m)) } \ar[r]^-{f_R} &
{ H_q(\bar{R}(B;m)) } \cr
}$$
takes $x^iy^j$ to $t^m$ and $x^{i-1}y^jdx$ and $x^iy^{j-1}dy$ to
$at^{m-1}dt$ and $bt^{m-1}dt$, respectively. Hence, for $q = 0$, this
map is an isomorphism, and for $q = 1$, it is a surjection whose
kernel is generated by the class of $b \cdot x^{i-1}y^jdx -
a \cdot x^iy^{j-1}dy$. In addition,
$$d_R(b \cdot x^{i-1}y^jdx - a \cdot x^iy^{j-1}dy) 
= -m \cdot x^{i-1}y^{j-1}dxdy.$$ 
The statement~(1) for $\ell(a,b,m) = 1$ follows. Finally, if 
$r \geqslant 1$, then we find that the groups 
$H_q(\bar{R}(A;m))$ are free abelian generated by the classes of the
cycles
$$\begin{aligned}
{} & x^iy^{j+ar}, \hskip2mm
d \cdot x^{i-1}y^{j+ar}dx - c \cdot x^iy^{j+ar-1}dy, \cr
{} &b \cdot x^{i-1}y^jdx z^{[r]} - a \cdot x^iy^{j-1}dy z^{[r]}, \hskip2mm
x^{i-1}y^{j-1}dxdy z^{[r]} \cr
\end{aligned}$$
whose degrees are $0$, $1$, $2r+1$, and $2r+2$, respectively. Here, we
have chosen a pair of integers $(c,d)$ such that $ad-bc = 1$, but we note
that the homology class of the cycle $d \cdot x^{i-1}y^{j+ar}dx - c
\cdot  x^iy^{j+ar-1}dy$ is independent of this choice. The map
$$\xymatrix{
{ H_q(\bar{R}(A;m)) } \ar[r]^-{f_R} &
{ H_q(\bar{R}(B;m)) } \cr
}$$
takes $x^iy^{j+ar}$ to $t^m$ and $d \cdot x^{i-1}y^{j+ar}dx - c \cdot
x^iy^{j+ar-1}dy$ to $t^{m-1}dt$, and hence, is an isomorphism for 
$q = 0$ and $q = 1$. Moreover,
$$d_R(b \cdot x^{i-1}y^jdx z^{[r]} - a \cdot x^iy^{j-1}dy z^{[r]}) =
-m \cdot x^{i-1}y^{j-1}dxdy z^{[r]}.$$
The statement~(1) for $\ell(a,b,m) \geqslant 2$ follows.

We next consider the case~(2), where $a$ but not $b$ divides $m$, the
case~(3) being similar. If we write $m = ai + abr$ with $0 < i < b$,
then $\ell(a,b,m) = r$. As a graded abelian group, $\bar{R}(A)$ is
freely generated by the homogeneous elements
$$\begin{aligned}
{} & x^iy^{as}z^{[r-s]}, \hskip2mm x^{i-1}y^{as}dx z^{[r-s]} \hskip22.5mm
\text{with $0 \leqslant s \leqslant r$;} \cr
{} & x^iy^{as-1}dy z^{[r-s]}, \hskip2mm x^{i-1}y^{as-1}dxdy z^{[r-s]}
\hskip8mm \text{with $1 \leqslant s \leqslant r$.} \cr
\end{aligned}$$
If $r = 0$, then $\bar{R}(A)$ has zero differential, and therefore, its
homology groups are free abelian generated by the classes of the
cycles $x^i$ and $x^{i-1}dx$. Moreover, 
$$\xymatrix{
{ H_q(\bar{R}(A;m)) } \ar[r]^-{f_R} &
{ H_q(\bar{R}(B;m)) } \cr
}$$
maps $x^i$ to $t^m$ and $x^{i-1}dx$ to $a \cdot t^{m-1}dt$,
respectively, so the statement~(2) in the case $\ell(a,b,m) = 0$
follows. If $r \geqslant 1$, then the homology groups of
$\bar{R}(A;m)$ are concentrated in degrees $q = 0$, $q = 1$, and
$q = 2r$. The first two groups are free abelian of rank $1$ generated
by the classes of the cycles 
$$x^iy^{ar}, \hskip4mm
d \cdot x^{i-1}y^{ar}dx - c \cdot x^iy^{ar-1}dy,$$
respectively, and we conclude as before that 
$f_R \colon H_q(\bar{R}(A;m)) \to H_q(\bar{R}(B;m))$ is an isomorphism
in degrees $q = 0$ and $q = 1$. The third group is cyclic of order $a$
generated by the class of the cycle
$x^{i-1}y^{a-1}dxdyz^{[r-1]}$. Hence, we conclude that the
statement~(2) holds also for $\ell(a,b,m) \geqslant 1$. 

Finally, we consider the case~(4), where $a$ and $b$ both divide
$m$. We write $m = abr$ with $r \geqslant 1$ and note that 
$\ell(a,b,m) = r-1$. As a graded free abelian group, the complex
$\bar{R}(A;m)$ is generated by the homogeneous elements
$$\begin{aligned}
{} & y^{as}z^{[r-s]} \hskip37.5mm \text{with $0 \leqslant s \leqslant r$;} \cr
{} & x^{bs-1}dx z^{[r-s]}, y^{as-1}dy z^{[r-s]} \hskip8mm 
\text{with $1 \leqslant s \leqslant r$;} \cr
{} & x^{b-1}y^{a(s-1)-1}dxdy z^{[r-s]} \hskip13.6mm
\text{with $2 \leqslant s \leqslant r$.} \cr
\end{aligned}$$
The homology groups are free abelian generated by the classes of the cycles
$$y^{ar}, \hskip4mm d \cdot x^{br-1}dx - c \cdot y^{a-1}dy,$$
and we see as before that $f_R \colon H_q(\bar{R}(A;m)) \to
H_q(\bar{R}(B;m))$ is an isomorphism. It follows that 
the groups $\tilde{H}_q(N^{\cy}(\langle t \rangle, \langle t^a,t^b
\rangle;m))$ are all trivial, as stated. 
\end{proof}

We may view the reduced homology of a pointed left $\mathbb{T}$-space
as a graded left module over the Pontryagin ring $H_*(\mathbb{T};\Z)$ with
left multication by the fundamental class $[\mathbb{T}]$ given by
Connes' operator.

\begin{cor}\label{homologyagrees}Let $1 < a < b$ be relative prime
integers. For every positive integer $m$, the left
$H_*(\mathbb{T};\Z)$-modules given by the reduced homology groups of
the pointed $\mathbb{T}$-spaces $\mathbb{T}_+ \wedge_{C_m}X(a,b,m)$
and $\mathbb{T}_+ \wedge_{C_m}Y(a,b,m)$ are abstractly isomorphic.
\end{cor}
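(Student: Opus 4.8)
The plan is to compute the reduced $H_*(\mathbb{T};\Z)$-module $\widetilde{H}_*(\mathbb{T}_+ \wedge_{C_m} Y(a,b,m))$ directly in each of the four cases of the definition of $Y(a,b,m)$ and to compare it with $\widetilde{H}_*(\mathbb{T}_+ \wedge_{C_m} X(a,b,m))$. For the $X$-side there is nothing to prove: Proposition~\ref{barconstructionlemma} identifies $\mathbb{T}_+ \wedge_{C_m} X(a,b,m)$ with $N^{\cy}(\langle t \rangle, \langle t^a, t^b \rangle;m)$, whose reduced homology and Connes operator are given by Proposition~\ref{homologycalculation}. Since $H_*(\mathbb{T};\Z)$ is an exterior algebra on a class in degree one, and since in every case the homology in question is concentrated in at most two consecutive degrees, an $H_*(\mathbb{T};\Z)$-module is determined up to abstract isomorphism by its underlying graded group together with the single operator $d$ given by multiplication by $[\mathbb{T}]$; so it suffices to match graded groups and $d$.

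The elementary input on the $Y$-side is that $\lambda(a,b,m)$ has real dimension $2\ell(a,b,m)$, its summands $\C(n)$ being indexed by the integers in $(cm/a, dm/b)$, of which there are $\ell(a,b,m)$; hence $S^{\lambda(a,b,m)}$ is non-equivariantly a $2\ell(a,b,m)$-sphere with trivial $\mathbb{T}$-action on reduced homology. As $\mathbb{T}_+ \wedge_{C_m}(-)$ is a left adjoint it preserves cofibration sequences, and for any subgroup $N \subseteq C_m$ the induction and untwisting isomorphisms (Remark~\ref{untwistingremark}, applied over $N$) give a natural identification $\mathbb{T}_+ \wedge_{C_m}((C_m/N)_+ \wedge S^{\lambda(a,b,m)}) \cong (\mathbb{T}/N)_+ \wedge S^{\lambda(a,b,m)}$, whose reduced homology is $\Z$ in degrees $2\ell(a,b,m)$ and $2\ell(a,b,m)+1$ and zero otherwise. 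Under this identification the map induced by a surjection of transitive $C_m$-sets $C_m/N' \twoheadrightarrow C_m/N$ becomes, on the circle factor, the covering projection $\mathbb{T}/N' \to \mathbb{T}/N$, which is an isomorphism on $H_0$ and multiplication by $[N:N']$ on $H_1$; and Connes' operator on $(\mathbb{T}/C_m)_+ \wedge S^{\lambda(a,b,m)}$ is multiplication by $m$, coming from the orbit map $\mathbb{T} \to \mathbb{T}/C_m$.

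If neither $a$ nor $b$ divides $m$, then $Y(a,b,m) = S^{\lambda(a,b,m)}$ and $\mathbb{T}_+ \wedge_{C_m} Y(a,b,m) \cong (\mathbb{T}/C_m)_+ \wedge S^{\lambda(a,b,m)}$ has reduced homology $\Z$ in degrees $2\ell(a,b,m)$ and $2\ell(a,b,m)+1$ with $d$ equal to multiplication by $m$, matching Proposition~\ref{homologycalculation}~(1). If $a \mid m$ but $b \nmid m$, then $C_m$ acts transitively on $C_a$ with stabiliser $C_{m/a}$, so $\widetilde{C}_a \wedge S^{\lambda(a,b,m)}$ is the cofibre of $(C_m/C_{m/a})_+ \wedge S^{\lambda(a,b,m)} \to S^{\lambda(a,b,m)}$; after applying $\mathbb{T}_+ \wedge_{C_m}(-)$ the comparison map is an isomorphism on $H_{2\ell(a,b,m)}$ and multiplication by $a$ on $H_{2\ell(a,b,m)+1}$, so by the long exact sequence the cofibre has reduced homology $\Z/a$ concentrated in degree $2\ell(a,b,m)+1$, matching Proposition~\ref{homologycalculation}~(2); the case $b \mid m$, $a \nmid m$ is the same with $a$ and $b$ exchanged. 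If both $a$ and $b$ divide $m$, then $\gcd(a,b)=1$ gives $C_{a+} \wedge C_{b+} \cong (C_m/C_{m/ab})_+$, so $\widetilde{C}_a \wedge S^{\lambda(a,b,m)} \wedge \widetilde{C}_b$ is the iterated mapping cone of the square with initial corner $(C_m/C_{m/ab})_+ \wedge S^{\lambda(a,b,m)}$, non-initial corners $(C_m/C_{m/a})_+ \wedge S^{\lambda(a,b,m)}$ and $(C_m/C_{m/b})_+ \wedge S^{\lambda(a,b,m)}$, and final corner $S^{\lambda(a,b,m)}$; after applying $\mathbb{T}_+ \wedge_{C_m}(-)$, the $H_{2\ell(a,b,m)}$-groups form a square of isomorphisms and contribute nothing, while the square of $H_{2\ell(a,b,m)+1}$-groups reads $\Z \xrightarrow{\,b\,} \Z$ over $\Z \xrightarrow{\,b\,} \Z$ with both vertical maps multiplication by $a$; taking cofibres of the two horizontal maps first produces $\Z/b \xrightarrow{\,a\,} \Z/b$, an isomorphism since $\gcd(a,b)=1$, whence $\widetilde{H}_*(\mathbb{T}_+ \wedge_{C_m} Y(a,b,m)) = 0$, matching Proposition~\ref{homologycalculation}~(4). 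Assembling the four cases proves the corollary. The step requiring the most care is the bookkeeping in this last case: one must check that each collapse map induces, on the circle factor after the induction and untwisting identifications, the covering projection of the asserted degree, so that the displayed square of homology groups, and hence the row-cofibre computation, are correct.
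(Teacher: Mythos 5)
Your proposal is correct and follows essentially the same route as the paper: the $X$-side is handled by Propositions~\ref{barconstructionlemma} and~\ref{homologycalculation}, and the $Y$-side by the untwisting isomorphism of Remark~\ref{untwistingremark}, which identifies $\mathbb{T}_+\wedge_{C_m}Y(a,b,m)$ with the (iterated) mapping cone of the square of canonical projections among the spaces $(\mathbb{T}/C_{m/ab})_+\wedge S^{\lambda(a,b,m)}$, $(\mathbb{T}/C_{m/a})_+\wedge S^{\lambda(a,b,m)}$, $(\mathbb{T}/C_{m/b})_+\wedge S^{\lambda(a,b,m)}$, and $(\mathbb{T}/C_m)_+\wedge S^{\lambda(a,b,m)}$. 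Your case-by-case computation of the covering degrees, the resulting cofiber homology, and the Connes operator (multiplication by $m$) simply carries out in detail the verification that the paper leaves to the reader, and it is correct.
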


\begin{proof}The structure of the left $H_*(\mathbb{T};\Z)$-module
given by the reduced homology groups of $\mathbb{T}_+ \wedge_{C_m}
X(a,b,m)$ was determined in Propositions~\ref{barconstructionlemma}
and~\ref{homologycalculation}. Using the untwisting
isomorphism in Remark~\ref{untwistingremark}, we may identify the
pointed left $\mathbb{T}$-space $\mathbb{T}_+ \wedge_{C_m}Y(a,b,m)$
with the iterated mapping cone of the diagram
$$\xymatrix{
{ (\mathbb{T}/C_{m/ab})_+ \wedge S^{\lambda(a,b,m)} } \ar[r] \ar[d] &
{ (\mathbb{T}/C_{m/a})_+ \wedge  S^{\lambda(a,b,m)} \phantom{,} } \ar[d] \cr
{ (\mathbb{T}/C_{m/b})_+ \wedge  S^{\lambda(a,b,m)} } \ar[r] &
{ (\mathbb{T}/C_m)_+ \wedge S^{\lambda(a,b,m)}, } \cr
}$$
where the maps are the canonical projections, and the upper right-hand
term (resp.~lower left-hand term, resp.~upper left-hand term) is
understood to be a one-point space if $m$ is not divisible by $a$
(resp.~by $b$, resp.~by $ab$). Now one readily verifies that the left
$H_*(\mathbb{T};\Z)$-module given by the reduced homology groups of
$\mathbb{T}_+ \wedge_{C_m}Y(a,b,m)$ has the same abstract structure as
that given by the reduced homology groups of $\mathbb{T}_+
\wedge_{C_m}X(a,b,m)$. 
\end{proof}

\begin{remark}\label{partialresult}Let $v$ be a non-negative
integer. It follows from Corollary~\ref{homologyagrees} and the proof
of Theorem~\ref{main} that if Conjecture~\ref{mainconjecture} holds
for all positive integers $m$ with $\ell(a,b,m) \leqslant v$, then
long exact sequence in the statement of Theorem~\ref{main} is valid
for all $q \leqslant 2v+1$. A similar argument shows that
Theorem~\ref{smalldegrees} follows from
Proposition~\ref{dimensiontwotheorem} which we prove below.
\end{remark}

\section{Stunted regular cyclic polytopes}\label{conjecturesection}

In this section, we formulate Conjecture~\ref{combinatorialconjecture}
concerning a new family of polytopes that we call stunted regular
cyclic polytopes. Assuming the conjecture, we construct maps
$u(a,b,m)$ that satisfy part~(1) of
Conjecture~\ref{mainconjecture}. Finally, we prove
Conjecture~\ref{combinatorialconjecture} and
Conjecture~\ref{mainconjecture} for small values of $m$.

We recall that the regular cyclic polytope of dimension $2d$ with $m$
vertices as defined by Gale~\cite{gale} is the convex hull $P(d,m)$
of the subset
$$V(d,m) = \{ (z,z^2,\dots,z^d) \mid z \in C_m \} \subset
\C^{\{1,2,\dots,d\}}$$
where $C_m \subset \C^*$ is the group of $m$th roots of unity. The
regular cyclic polytopes are high-dimensional generalizations of the
regular polygons, and the combinational structure of their faces is
completely understood. We are interested in a family of polytopes
defined in a similar manner but with the interval $\{1,2, \dots, d\}$
replaced by different intervals of integers. We call these stunted
regular cyclic polytopes.

\begin{definition}Let $1 < a < b$ be relatively prime integers, let
$(c,d)$ be a pair of integers with $ad - bc = 1$, and let $m$ be a
positive integer. The stunted regular cyclic polytope $P(a,b,m)$ is
the convex hull of the finite subset
$$V(a,b,m) = \{ (z^n \mid n \in J(a,b,m)) \mid z \in C_m \} \subset
\C^{J(a,b,m)},$$ 
where $J(a,b,m)$ is the set of integers in the closed interval $[cm/a,
dm/b]$.
\end{definition}

We note that the dimension of the polytope $P(a,b,m)$ is equal to
twice the number $\ell(a,b,m+a+b)$ of ways in which $m$ can be expressed
as $m = ai +  bj$ with $(i,j)$ a pair of non-negative integers. 

We let $\C(n)$ be the $C_m$-representation that is given by $\C$ with 
$z \in C_m$ acting as multiplication by $z^n$ and define
$\bar{\lambda}(a,b,m)$ to be the direct sum over all integers $n$ in
the closed interval $[cm/a, dm/b]$ of the representations
$\C(n)$. It contains the representation $\lambda(a,b,m)$ defined as
the direct sum of the representations $\C(n)$ as $n$ ranges over all
integers in the open interval $(cm/a,dm/b)$ as a subrepresentation,
and we have $\lambda(a,b,m) = \bar{\lambda}(a,b,m)$ if and only if $a$
and $b$ do not divide $m$. Indeed, if $ad - bc = 1$, then 
$(a,b) = (a,c) = (b,d) = (c,d) =  1$. If $a$ divides $m$ (resp.~if $b$
divides $m$), then we define $\lambda'(a,b,m)$
(resp.~$\lambda''(a,b,m)$) to be the subrepresentation of
$\bar{\lambda}(a,b,m)$ given by the summand $\C(cm/a)$
(resp.~by the summand $\C(dm/b)$). The map $x \colon C_m \to
\bar{\lambda}(a,b,m)$ with $n$th component $x_n(z) = z^n$ extends to a
map of representations $x \colon \R[C_m] \to \bar{\lambda}(a,b,m)$ and
the stunted regular cyclic polytope 
$$P(a,b,m) \subset \bar{\lambda}(a,b,m)$$
is equal to the image by the latter map of 
$\Delta^{m-1} \subset \R[C_m]$. We remark that $P(a,b,m)$ contains the origin of
$\bar{\lambda}(a,b,m)$ as an interior point. We also define
$$Q(a,b,m) \subset P(a,b,m)$$
to be the image of the sub-simplicial complex $\Sigma(a,b,m) \subset
\Delta^{m-1}$ defined in the introduction; it is a sub-$C_m$-space of
$P(a,b,m)$. To understand this subspace better, we let $p$ be an
integer in the closed interval $[cm/a,dm/b]$ and write $p = cu + dv$
for a unique pair of non-negative integers $(u,v)$. We define an index
function of weight $p$ to be a map $g \colon \Z \to \Z$ with the
property that for all $t \in \Z$, $g(t+u+v) - g(t) = m$ and  $g(t) -
g(t-1) \in \{a,b\}$, and define $Q(a,b,m;g)$ to be the polytope given
by the convex hull in $\bar{\lambda}(a,b,m)$ of the finite subset
$$V(a,b,m;g) = \{(\zeta_m^{g(t)n} \mid n \in J(a,b,m)) \mid t \in \Z\}
\subset \C^{J(a,b,m)}.$$
Then $Q(a,b,m)$ is equal to the union of the polytopes $Q(a,b,m;g)$
as $p$ ranges over all integers in $[cm/a,dm/b]$ and $g$ ranges over
all index functions of weight $p$.

If $a$ (resp.~$b$) divides $m$, then the composition of the canonical
inclusion of the subspace $\lambda'(a,b,m)$ (resp.~$\lambda''(a,b,m)$)
into $\bar{\lambda}(a,b,m)$ and the canonical projection of
$\bar{\lambda}(a,b,m)$ onto $\C(cm/a)$ (resp.~$\C(dm/b)$) is an
isomorphism, and we define the subset $C_a' \subset \lambda'(a,b,m)$
(resp.~$C_b'' \subset \lambda''(a,b,m)$) to be the inverse image of
the subset $C_a \subset \C(cm/a)$ (resp.~$C_b \subset \C(dm/b)$).

\begin{conjecture}\label{combinatorialconjecture}If \,$1 < a < b$ are
relatively prime integers and if $m$ is a positive integer, then the
following hold.
\begin{enumerate}
\item[{\rm (1)}]The subspace $Q(a,b,m) \subset P(a,b,m)$ does not
  contain the origin of $\bar{\lambda}(a,b,m)$.
\item[{\rm (2)}]If $a$ divides $m$, then $Q(a,b,m) \cap
  \lambda'(a,b,m) = C_a' \subset \lambda'(a,b,m)$.
\item[{\rm (3)}]If $b$ divides $m$, then $Q(a,b,m) \cap
  \lambda''(a,b,m)) = C_b'' \subset \lambda''(a,b,m)$.
\item[{\rm (4)}]If neither $a$ nor $b$ divides $m$, then
  $\partial P(a,b,m) \subset Q(a,b,m)$.
\end{enumerate}
\end{conjecture}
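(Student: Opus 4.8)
The strategy is to deduce all four statements from a combinatorial description of the faces of the stunted regular cyclic polytope $P(a,b,m)$, so the first task is to obtain such a description. By definition $P(a,b,m)$ is the convex hull of the finite set $\{(z^n \mid n \in J(a,b,m)) \mid z \in C_m\}$, and $J(a,b,m)$ is an interval of consecutive integers $n_0, n_0+1, \dots, n_1$; thus $P(a,b,m)$ is the convex hull of the image of a curve whose Fourier frequencies are the consecutive integers $n_0, \dots, n_1$. A subset $F \subset C_m$ spans a face of $P(a,b,m)$ if and only if there is a real trigonometric polynomial with spectrum contained in $\{0\} \cup \{\,\pm n \mid n_0 \leqslant n \leqslant n_1\,\}$, non-negative on $C_m$, and vanishing exactly on $F$; the Fej\'er--Riesz theorem represents such a polynomial as $|R(e^{i\theta})|^2$ for a complex polynomial $R$ of degree at most $n_1$ whose Fourier coefficients moreover vanish in the degrees $\pm 1, \dots, \pm(n_0-1)$. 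The crux is to determine which polynomials $R$ occur and to identify the resulting facets of $P(a,b,m)$ with the polytopes $Q(a,b,m;g)$ of the various weights $p \in J(a,b,m)$, where the $a$- and $b$-steps of an index function $g$ record the multiplicities of the roots of $R$ at the $m$-th roots of unity. This analysis of the facet structure is the main obstacle, and it is precisely the question flagged as open in the introduction; in low dimensions --- when $\ell(a,b,m) \leqslant 1$, so that $\dim P(a,b,m) \leqslant 2$ and $P(a,b,m)$ is a polygon --- it collapses to elementary geometry, which is how Proposition~\ref{dimensiontwo} is proved.

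Granting the facet description, parts~(1) and~(4) follow immediately. Since $Q(a,b,m)$ is the union of the polytopes $Q(a,b,m;g)$, each of which is then a proper face of $P(a,b,m)$, the subspace $Q(a,b,m)$ is contained in $\partial P(a,b,m)$ and hence does not contain the origin, which is an interior point of $P(a,b,m)$; this is~(1). When neither $a$ nor $b$ divides $m$, the description says that every facet of $P(a,b,m)$ is one of the $Q(a,b,m;g)$, so $\partial P(a,b,m)$ is their union, which is~(4). One may instead attempt to write down the separating functional for~(1) by hand: for a simplex $F$ of $\Sigma(a,b,m)$ with cyclic gaps $g_s = a p_s + b q_s$ one takes as candidate functional a trigonometric polynomial built as a product over the arcs of $C_m \smallsetminus F$ and checks positivity at the vertices of $F$ --- but this candidate is exactly the Fej\'er--Riesz factor $R$, so this route does not avoid the obstacle.

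Parts~(2) and~(3) need less. Suppose $a$ divides $m$, so that $n_0 = cm/a$ and $\lambda'(a,b,m) = \C(n_0)$ is a coordinate line of $\bar\lambda(a,b,m)$. Since $\gcd(a,c) = 1$, the projection of $P(a,b,m)$ onto $\C(n_0)$ is the convex hull of $\{\,\zeta_m^{rn_0} \mid r \in \Z/m\,\} = C_a$, a regular $a$-gon; and because $J(a,b,m)$ is too short to contain a multiple of $m/a$ other than $n_0 = c\,(m/a)$, a Fourier computation shows that $P(a,b,m) \cap \lambda'(a,b,m)$ is this entire $a$-gon, that is, the convex hull of $C_a'$. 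The content of~(2) is then that $Q(a,b,m)$ meets it only in the vertices $C_a'$. For the inclusion $Q(a,b,m) \cap \lambda'(a,b,m) \subset C_a'$, one shows --- by a Fourier argument exploiting that the cyclic gaps of a simplex of $\Sigma(a,b,m)$ all lie in $\langle a,b\rangle$ --- that such a simplex whose image meets $\lambda'(a,b,m)$ has all its vertices in a single residue class modulo $a$, whence its image is forced to a vertex of $C_a'$. For the reverse inclusion, each of the $a$ cosets of the subgroup $a\,(\Z/m) \subset \Z/m$, being an arithmetic progression with common difference $a \in \langle a,b\rangle$, is a simplex of $\Sigma(a,b,m)$, and its barycentre maps to a vertex of $C_a'$; these account for all of $C_a'$. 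This proves~(2), and~(3) is the mirror statement with $a$ and $b$, and $n_0$ and $n_1$, interchanged.
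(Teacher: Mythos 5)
The statement you are asked to prove is Conjecture~\ref{combinatorialconjecture}, which the paper does not prove in general: it is precisely the open problem about the facet structure of the stunted regular cyclic polytopes flagged in the introduction, and the paper only establishes it for $\ell(a,b,m) \leqslant 1$ (Proposition~\ref{dimensiontwo}), by explicitly computing the projections $P(a,b,m,n)$ and $Q(a,b,m,n;g)$ onto each summand $\C(n)$ in terms of index functions. Your proposal is a strategy outline rather than a proof, and you say so yourself: the central step --- identifying the facets of $P(a,b,m)$ with the polytopes $Q(a,b,m;g)$ via non-negative trigonometric polynomials --- is exactly the open question, so parts~(1) and~(4) are not established. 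Moreover, the proposed Fej\'er--Riesz reduction is itself shaky: a supporting functional only needs to be non-negative on the finite set $C_m$, not on the whole circle, so it need not be a square modulus $|R|^2$; and even on the circle, Fej\'er--Riesz gives no control over the vanishing of the Fourier coefficients of $|R|^2$ in the gap degrees $\pm 1, \dots, \pm(n_0 - 1)$, which is the constraint that makes these polytopes ``stunted'' rather than cyclic.

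Parts~(2) and~(3) are not proved either. Your reverse inclusion $C_a' \subset Q(a,b,m) \cap \lambda'(a,b,m)$ via barycentres of the cosets of $a(\Z/m)$ is fine, but the forward inclusion rests entirely on the unproven assertion that a simplex of $\Sigma(a,b,m)$ whose image meets $\lambda'(a,b,m)$ has all its vertices in a single residue class modulo $a$; no Fourier argument is given, and this claim is a strengthening of the conjectural statement itself, not something weaker. Finally, the parenthetical reduction ``$\ell(a,b,m) \leqslant 1$, so that $\dim P(a,b,m) \leqslant 2$'' is false when $a$ or $b$ divides $m$: the dimension is $2\ell(a,b,m+a+b)$, so for instance $P(a,b,2ab)$ is $6$-dimensional and the case $m$ divisible by $a$ with $\ell(a,b,m)=1$ gives a $4$-dimensional polytope (this is exactly why the paper's final remark leaves even $q \leqslant 3$ conditional when $p \mid ab$). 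The paper's low-dimensional proof does not work with the polytope as a polygon but with its coordinate projections, which is a genuinely different and more careful argument than the one you sketch.
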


Granting this conjecture, we define maps of pointed $C_m$-spaces
$$\begin{xy}
(-15,0)*+{X(a,b,m)}="a";
(15,0)*+{Y(a,b,m)}="b";
{ \ar^-{u(a,b,m)} "b";"a";};
\end{xy}$$
satisfying part~(1) of Conjecture~\ref{mainconjecture} as
follows. The maps are the composition
$$\xymatrix{
{ \Delta^{m-1}/\Sigma(a,b,m) } \ar[r]^-{x} &
{ P(a,b,m)/Q(a,b,m) } \ar[r]^-{h} &
{ Y(a,b,m) } \cr
}$$
of the map $x$ and a map $h$ which we now define. Suppose first that
neither $a$ nor $b$ divides $m$. In this case, we define $h$ to be the map
of pointed $C_m$-spaces 
$$\xymatrix{
{ P(a,b,m)/Q(a,b,m) } \ar[r]^-{h} &
{ D(\lambda(a,b,m))/S(\lambda(a,b,m)) = S^{\lambda(a,b,m)} } \cr
}$$
induced by a suitable radial dilation away from $0 \in P(a,b,m)$. It
follows from Conjecture~\ref{combinatorialconjecture}~(1) that the map
is well-defined, provided that the dilation factor is sufficiently
large. Moreover, if the map is defined, then its homotopy class
is independent of the choice of dilation factor. Suppose next that $a$
but not $b$ divides $m$. We first use
Conjecture~\ref{combinatorialconjecture}~(2) to choose a small ball
$B \subset \bar{\lambda}(a,b,m) \smallsetminus C_a'$ around the unique
point $\zeta_{2a}'$ of $S(\lambda'(a,b,m))$ with image $\zeta_{2a}
\in \C(cm/a)$ by the canonical projection and consider the open subset 
$$U = (C_m \cdot B) \cap S(\bar{\lambda}(a,b,m)) \subset
S(\bar{\lambda}(a,b,m)).$$
It follows from Conjecture~\ref{combinatorialconjecture}~(1) that
radial dilation with sufficiently large dilation factor away from $0
\in P(a,b,m)$ induces a map of pointed $C_m$-spaces
$$\xymatrix{
{ P(a,b,m)/Q(a,b,m) } \ar[r]^-{h'} &
{ D(\bar{\lambda}(a,b,m))/(S(\bar{\lambda}(a,b,m)) \smallsetminus U).
} \cr
}$$
Second, let $C_a^{\dagger} = C_m \cdot \zeta_{2a}' \subset S(\lambda'(a,b,m))$
be the subset of translates by $C_m$ of $\zeta_{2a}'$, and let
$\widetilde{C}_a^{\dagger} \subset D(\lambda'(a,b,m))$ be the cone on
$C_a^{\dagger}$ with apex $0$. Then the canonical homeomorphism from
$D(\lambda'(a,b,m)) \times D(\lambda(a,b,m))$ onto
$D(\bar{\lambda}(a,b,m))$ induces an inclusion $\iota$ of the pointed
$C_m$-space
$$(\widetilde{C}_a^{\dagger} \times D(\lambda(a,b,m))) / 
(\widetilde{C}_a^{\dagger} \times S(\lambda(a,b,m)) \cup C_a^{\dagger}
\times D(\lambda(a,b,m))),$$
which we identify with $Y(a,b,m)$ by identifying
$\zeta_{2a}' \in C_a^{\dagger}$ with $1 \in C_a$, 
into the target pointed $C_m$-space of the map $h'$. More, the
inclusion $\iota$ is readily verified to be a strong deformation
retract. Finally, we define the map $h$ to be the composition $h''
\circ h'$ of the map $h'$ and a homotopy inverse $h''$ of
$\iota$. Again, the homotopy class of the map of pointed $C_m$-spaces
$c$ is independent of the choices made. The definition of the map $h$
in the remaining cases is analogous. 

We fix a pair of relatively prime integers $1 < a < b$ and choose
integers $c$ and $d$ with $ad-bc = 1$. Let $m$ be a positive integer,
let $(i,j)$ be a pair of non-negative integers such that $m = ai+bj$,
and let $n = ci+dj$ be the corresponding integer in the closed interval
$[cm/a,bm/d]$. Let also $p$ be an integer in $[cm/a,dm/b]$ and let $g
\colon \Z \to \Z$ be an index function of weight $p$. We proceed to
describe the images
$$Q(a,b,m,n;g) \subset Q(a,b,m,n) \subset P(a,b,m,n) \subset \C(n)$$
of $Q(a,b,m;g) \subset Q(a,b,m) \subset P(a,b,m) \subset
\bar{\lambda}(a,b,m)$
by the canonical projection onto the summand $\C(n)$. Since for every pair of
integers $(k,l)$, we have
$$\begin{pmatrix} k & l \cr \end{pmatrix} 
\begin{pmatrix} m \cr n \cr \end{pmatrix} 
= \begin{pmatrix} k & l \cr \end{pmatrix}
\begin{pmatrix} a & b \cr c & d \cr \end{pmatrix}
\begin{pmatrix} i \cr j \cr \end{pmatrix},$$
and since the square matrix on the right-hand side is invertible, we
find that the greatest common divisors $(m,n)$ and $(i,j)$ are
equal. We let $q$ be the common value and write $m = m'q$, $n = n'q$,
$i = i'q$, and $j = j'q$. Then $P(a,b,m,n)$ is equal to the regular
polygon in $\C(n)$ with vertex set
$$V(a,b,m,n) = \{ z^n \mid z \in C_m\} = C_{m'}.$$
Similarly, $Q(a,b,m,n;g)$ is the (irregular) polygon in $\C(n)$ with
vertex set 
$$V(a,b,m,n,g) = \{\zeta_m^{g(t)n} \mid t \in \Z\} =
\{\zeta_{m'}^{g(t)n'} \mid t \in \Z\} \subset C_{m'},$$
and $Q(a,b,m,n)$ is the union of the polygons $Q(a,b,m,n,g)$ as $p$
ranges over all integers in $[cm/a,dm/b]$ and $g$ ranges over all
index functions of weight $p$. In this connection, we note that 
since $i = dm - bn$ and $j = -cm + an$, we have
$$\zeta_m^{an} = \zeta_m^j = \zeta_{m'}^{j'}, \hskip10mm
\zeta_m^{bn} = \zeta_m^{-i} = \zeta_{m'}^{-i'}.$$
Finally, let $(r,s)$ be a pair of integers with $ri'+sj' = 1$. We
define $(k,l)$ by
$$\begin{pmatrix}
k & l \cr
\end{pmatrix}
= \begin{pmatrix}
r & s \cr
\end{pmatrix}
\begin{pmatrix}
\phantom{-}d & -b \cr
-c & \phantom{-}a \cr
\end{pmatrix}$$
and note that since $km' + ln' = 1$, we have
$$\zeta_m^{ln} = \zeta_{m'}^{ln'} = \zeta_{m'}.$$
We proceed to describe the integer $l$ more precisely. First, if $i$
and $j$ are both positive, then we may choose $(r,s)$ with
$0 \leqslant -r \leqslant j' - 1$ and $1 \leqslant s \leqslant
i'$. Indeed, if $1 \leqslant s \leqslant i'$, then $j' \leqslant sj'
\leqslant i'j'$, and hence, $j' - 1 \leqslant -ri'  \leqslant i'j'
-1$, which implies that $0 \leqslant -r \leqslant j'-1$ as
desired. Therefore, if $i$ and $j$ are both positive, then we can
choose $l = as + b(-r)$ to be a linear combination of $a$ and $b$ with
non-negative coefficients that satisfies the inequalities
$$a \leqslant l \leqslant m' - b.$$
Next, if $i > 0$ and $j = 0$, then $i' = 1$, $j' = 0$, $m' =
a$, and $n' = c$. We necessarily have $r = 1$ and $s = 0$, which gives
$k = d$ and  $l = -b$. Finally, if $i = 0$ and $j > 0$, then $i' = 0$,
$j' = 1$, $m' = b$, and $n' = d$. Therefore, we have $r = 0$ and $s =
1$, which gives $k = -c$ and $l = a$. We use this to prove the
following result.

\begin{prop}\label{dimensiontwo}Let $1 < a < b$ be relatively prime
integers. Conjecture~\ref{combinatorialconjecture} is true for all
positive integers $m$ with $\ell(a,b,m) \leqslant 1$.
\end{prop}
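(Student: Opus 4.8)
The plan is to verify each of the four assertions of Conjecture~\ref{combinatorialconjecture} directly, using the hypothesis $\ell(a,b,m) \leqslant 1$ to reduce everything to an essentially two-dimensional picture. By Lemma~\ref{sylvester}~(2), when $\ell(a,b,m) = 0$ the stunted polytope $P(a,b,m)$ has dimension $0$ (the representation $\lambda(a,b,m)$ is trivial and $Q(a,b,m)$ is empty unless $a$ or $b$ divides $m$), and when $\ell(a,b,m) = 1$ the open interval $(cm/a, dm/b)$ contains exactly one integer $n$, so $\lambda(a,b,m) = \C(n)$ is a single complex line and $\bar\lambda(a,b,m)$ has real dimension $2$, $4$, or $6$ according to whether neither, one, or both of $a, b$ divide $m$. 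Thus the main case to handle is $\ell(a,b,m) = 1$ with neither $a$ nor $b$ dividing $m$, where $P(a,b,m) = P(a,b,m,n)$ is an honest planar polygon inscribed in $\C(n)$; here I would use the explicit description built up in the paragraphs preceding the statement, namely that $P(a,b,m,n)$ is the regular $m'$-gon with vertex set $C_{m'}$ (where $q = (m,n)$, $m' = m/q$) and that $Q(a,b,m,n;g)$ is the inscribed polygon with vertices $\{\zeta_{m'}^{g(t)n'} \mid t \in \Z\}$.

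First I would treat assertion~(4), since it is the crux. Fix $n$ the unique integer in $(cm/a, dm/b)$ and let $(i,j)$ be the corresponding pair of positive integers with $m = ai + bj$. I want to show every point of $\partial P(a,b,m)$ lies in $Q(a,b,m)$. Since $P(a,b,m) \subset \bar\lambda(a,b,m)$ is the image of $\Delta^{m-1}$ and, with $a, b$ not dividing $m$, we have $\bar\lambda(a,b,m) = \lambda(a,b,m) = \C(n)$, the polytope $P(a,b,m)$ is planar and $\partial P(a,b,m)$ consists of the $m'$ edges of the regular $m'$-gon on $C_{m'}$. It therefore suffices to show each such edge, joining consecutive vertices $\zeta_{m'}^{p}$ and $\zeta_{m'}^{p+1}$, occurs as an edge of some $Q(a,b,m,n;g)$. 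Using $\zeta_m^{an} = \zeta_{m'}^{j'}$ and $\zeta_m^{bn} = \zeta_{m'}^{-i'}$ established above, an index function $g$ of weight $p$ built from steps of size $a$ and $b$ produces a cyclic sequence of vertices $\zeta_{m'}^{g(t)n'}$ whose consecutive differences are $\pm j'$ or $\mp i'$ (mod $m'$), and the point $l$ (the quantity analyzed at length just before the statement, with $a \leqslant l \leqslant m' - b$ when $i, j > 0$) shows that the vertex gaps realized by such $g$ generate the full cyclic group $\Z/m'$; choosing $g$ so that two consecutive vertices of $Q(a,b,m,n;g)$ differ by exactly the generator $\zeta_{m'}$ places the corresponding edge of $Q$ onto the prescribed edge of $P$. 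Running over all $p$ then covers $\partial P(a,b,m)$. I expect this combinatorial bookkeeping — matching the edges of the regular polygon with edges of the $Q(a,b,m;g)$ — to be the main obstacle, and the detailed choice of $(r,s)$ and $(k,l)$ carried out before the statement is exactly what is needed to push it through.

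Next I would dispatch assertions~(1), (2), (3). For~(1): when $\ell(a,b,m) = 0$ and $a, b$ do not divide $m$ there is nothing to prove since $Q(a,b,m)$ is empty; when $a$ or $b$ divides $m$ with $\ell = 0$, $Q(a,b,m)$ reduces to finitely many vertices on the sphere $S(\lambda'(a,b,m))$ or $S(\lambda''(a,b,m))$, which visibly excludes the origin; and when $\ell(a,b,m) = 1$, (1) follows from (4) in the case $a, b \nmid m$ (a planar polygon has the origin in its interior, hence off $\partial P = Q$), while in the cases $a \mid m$ or $b \mid m$ one uses that $Q(a,b,m)$ is a union of lower-dimensional polytopes $Q(a,b,m;g)$ each of which, by the vertex description $V(a,b,m;g) = \{(\zeta_m^{g(t)n} \mid n \in J(a,b,m)) \mid t \in \Z\}$, lies in a proper affine subspace missing $0$ — concretely, the weight-$p$ constraint $g(t+u+v) - g(t) = m$ forces all vertices of $Q(a,b,m;g)$ to share a common nonzero coordinate in the $\C(cm/a)$- or $\C(dm/b)$-slot, so their convex hull avoids the origin. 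For~(2) and~(3): with $a \mid m$ (say), project $\bar\lambda(a,b,m)$ onto $\lambda'(a,b,m) = \C(cm/a)$; the intersection $Q(a,b,m) \cap \lambda'(a,b,m)$ is computed from the vertex sets $V(a,b,m,cm/a;g)$, and since $m' = a$, $n' = c$ in the relevant slot, these vertices form exactly the group $C_a$, i.e. $C_a' \subset \lambda'(a,b,m)$; one then checks that no interior point of any $Q(a,b,m;g)$ lands in the line $\lambda'(a,b,m)$ beyond these vertices, using again that $\ell(a,b,m) \leqslant 1$ keeps the ambient dimension small enough to make the geometry transparent. Assembling these four verifications completes the proof.
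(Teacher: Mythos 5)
Your overall strategy coincides with the paper's: reduce to the planar projections $P(a,b,m,n)$ and $Q(a,b,m,n;g)$, and in the main case (neither $a$ nor $b$ divides $m$, $\ell(a,b,m)=1$) prove part~(4) by exhibiting an index function $g$ taking both of the values $0$ and $l$, so that $1$ and $\zeta_{m'}$ are vertices of $Q(a,b,m;g)$ and the edge between them lies in $Q(a,b,m)$; together with $C_m$-equivariance this gives $\partial P(a,b,m)\subset Q(a,b,m)$. That part of your plan is sound. But your deduction of part~(1) from part~(4) contains a genuine error: you assert $\partial P(a,b,m)=Q(a,b,m)$, whereas (4) only gives the inclusion $\partial P\subset Q$, and in fact $Q(a,b,m)$ is a union of \emph{filled} convex polygons $Q(a,b,m;g)$ (the convex hulls of their vertex sets), hence is $2$-dimensional and strictly larger than $\partial P$ as soon as $i+j\geqslant 3$ (e.g.\ $a=2$, $b=3$, $m=7$). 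A priori such a filled polygon could contain the origin, so (1) does not follow formally from (4); the paper proves it by a separate counting argument, namely the inequality $m'-2ij'\geqslant 2$, equivalently $ij'<\lfloor m'/2\rfloor$, which shows that the vertices of each $Q(a,b,m,n;g)$ lie among fewer than $\lfloor m'/2\rfloor$ consecutive $m'$-th roots of unity, so each hull lies in an open half-plane missing $0$. This estimate is missing from your plan.

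A second gap occurs in the cases where $a$ (or $b$, or both) divides $m$. Your mechanism for (1)--(3) — that the weight constraint forces all vertices of $Q(a,b,m;g)$ to share a common nonzero coordinate in the $\C(cm/a)$- or $\C(dm/b)$-slot — is false for index functions of the intermediate weight: the $\C(cm/a)$-coordinate of the vertex indexed by $t$ is $\zeta_a^{g(t)c}$, and every step of size $b$ multiplies it by $\zeta_a^{bc}=\zeta_a^{-1}\neq 1$, so for a weight-$n_2$ index function (which uses $a$ steps of size $b$ per period when $m=a(i+b)$) these coordinates run through all of $C_a$. For such $g$ the paper instead shows that the projection of $Q(a,b,m;g)$ to the middle summand $\C(n_2)$ misses the origin (using $ai/m<1/2$), which simultaneously gives (1) and the fact that these $Q(a,b,m;g)$ do not meet $\lambda'(a,b,m)$; and in the hardest case $m=2ab$ the middle projection \emph{can} contain $0$, and one must check that the unique point of $Q(a,b,m;g)$ over $0\in\C(n_2)$ has nonzero coordinates in $\C(n_1)$ and $\C(n_3)$. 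Finally, your statement~(2)/(3) argument reduces the real content to ``one then checks that no interior point lands in $\lambda'$ beyond these vertices,'' which is exactly what needs proof (and, when $ab\mid m$, one must also exhibit points of $Q\cap\lambda'$, as the paper does with the averaged points $z=x\bigl((1/2b)(\zeta_m^{g(0)}+\dots+\zeta_m^{g(2b-1)})\bigr)$, to get the equality with $C_a'$). As it stands, the plan would not go through without these additional arguments.
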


\begin{proof}We use the descriptions of $P(a,b,m,n)$ and $Q(a,b,m,n)$
established above and first assume that neither $a$ nor $b$ divides
$m$. If $\ell(a,b,m) = 0$, then $P(a,b,m)$ consists of a single point
and $Q(a,b,m)$ is empty, so Conjecture~\ref{combinatorialconjecture}
holds trivially for $m$. Therefore, we may further assume that
$\ell(a,b,m) = 1$, in which case $m = ai+bj$ with $0 < i < b$ and $0 <
j < a$, and hence, $P(a,b,m) = P(a,b,m,n)$ and $Q(a,b,m) = Q(a,b,m,n)$
with $n = ci + dj$. Moreover, every index function $g \colon \Z \to
\Z$ has weight $n$. To prove statement~(4) of
Conjecture~\ref{combinatorialconjecture} for $m$, it will suffice to
show that there exists an index function $g \colon \Z \to \Z$ that
takes both of the values $0$ and $l$. For then $\{1,\zeta_{m'}\} \subset
V(a,b,m,n,g)$, and therefore, the edge that connects $1$ and
$\zeta_{m'}$ is contained in $Q(a,b,m)$, which, in turn, shows that
$\partial P(a,b,m) \subset Q(a,b,m)$, since 
$Q(a,b,m) = C_m \cdot Q(a,b,m) \subset \C(n)$. But we saw above that
$l = as + b(-r)$ with $1 \leqslant s \leqslant i'$ and $0 \leqslant -r
\leqslant j' - 1$, so the required index function indeed exists. It
remains to prove that statement~(1) of
Conjecture~\ref{combinatorialconjecture} holds for $m$. To this end we
note that, since $0 < i < b$ and $0 < j < a$, 
$$m' - 2ij' = ai' + bj' - ji' - ij' = (a-j)i' + (b-i)j' \geqslant i' +
j' \geqslant 2,$$
which, in turn, gives the inequality
$$ij' < \lfloor m'/2\rfloor,$$
where $\lfloor x \rfloor$ denotes the largest integer less than or
equal to $x$. Now, if $g \colon \Z \to \Z$ be an index function, then
among the $i+j$ integers $\{0,1,\dots,i+j-1\}$, there are $i$ integers
$u$ with $g(u) - g(u-1) = a$ and $j$ integers $u$ with 
$g(u) - g(u-1) = b$, and since $\zeta_m^{an} = \zeta_{m'}^{j'}$ and
$\zeta_m^{bn} = \zeta_{m'}^{-i'}$, the inequality above shows that
$$V(a,b,m,n,g) \subset \{ \zeta_{m'}^{g(0)n' + u} \mid u \in
\{0,1,\dots, \lfloor m'/2 \rfloor -1\}\}.$$
Therefore, the polygon $Q(a,b,m,n,g)$ does not contain $0 \in \C(n)$,
and since this is true for every index function $g \colon \Z \to \Z$,
neither does $Q(a,b,m)$. This proves that also statement~(1) of
Conjecture~\ref{combinatorialconjecture} holds for $m$. 

We next assume that $a$ but not $b$ divides $m$. If $\ell(a,b,m) = 0$,
then $m = ai$ with $0 < i < b$, and we have $P(a,b,m) = P(a,b,m,n)$
and $Q(a,b,m) = Q(a,b,m,n)$ with $n = ci$. The former is the regular
polygon with vertex set $C_a \subset \C(n)$ and the latter is the
subset $C_a$ of vertices, and hence,
Conjecture~\ref{combinatorialconjecture} holds for $m$. So we further
assume that $\ell(a,b,m) = 1$, in which case $m = a(i+b) = ai + ba$
with $0 < i < b$, and hence, $\bar{\lambda}(a,b,m) = \C(n_1) \oplus
\C(n_2)$ with $n_1 = ci+bc$ and $n_2 = ci+ad$. We find that
$P(a,b,m,n_1)$ and $P(a,b,m,n_2)$ are the regular polygons with vertex
sets $C_a \subset \C(n_1)$ and $C_{m/(a,i)} \subset \C(n_2)$,
respectively. The index functions $g \colon \Z \to \Z$ have weight
either $n_1$ or $n_2$. If $g$ has weight $n_1$, then $Q(a,b,m,n_1;g)$
is a single element of $C_a \subset \C(n_1)$ and $Q(a,b,m,n_2;g)$ is
the regular polygon with vertex set $C_{i+b} \subset \C(n_2)$. It
follows that $Q(a,b,m;g)$ does not contain the origin of
$\bar{\lambda}(a,b,m)$ and that $Q(a,b,m;g) \cap \lambda'(a,b,m)$ 
is a single element of $C_a' \subset \lambda'(a,b,m)$. If $g$ has
weight $n_2$, then $Q(a,b,m,n_1;g) = P(a,b,m,n_1)$ while
$Q(a,b,m,n_2;g)$ is contained in the (irregular) polygon with vertex
set
$$\{\zeta_{m/(a,i)}^{w+ta/(a,i)} \mid t \in \{0,1,\dots,i-1\}\}
\subset C_{m/(a,i)} \subset \C(n_2),$$
for some integer $w$. Since
$$(ai/(a,i))/(m/(a,i)) = ai/m = ai/(ai+ab) < ai/(ai+ai) = 1/2,$$
we conclude that $Q(a,b,m,n_2;g) \subset \C(n_2)$ does not contain the
origin. It follows that $Q(a,b,m;g) \subset \bar{\lambda}(a,b,m)$ does
not contain the origin and does not intersect the subspace
$\lambda'(a,b,m)$. This shows that
Conjecture~\ref{combinatorialconjecture} holds for $m$. The case where
$b$ but not $a$ divides $m$ is completely analogous.

It remains only to consider the cases $m = ab$ and $m = 2ab$. In the
former case, we have $\ell(a,b,m) = 0$ and $\bar{\lambda}(a,b,m) = 
\C(n_1) \oplus \C(n_2)$ with $n_1 = bc$ and $n_2 = ad$. Hence, 
$P(a,b,m,n_1)$ and $P(a,b,m,n_2)$ are the regular polygons with vertex
sets $C_a \subset \C(n_1)$ and $C_b \subset \C(n_2)$, respectively. An
index function $g \colon \Z \to \Z$ has weight either $n_1$ or
$n_2$. If $g$ has weight $n_1$, then $Q(a,b,m,n_1;g)$ is a single point
in $C_a \subset \C(n_1)$ while $Q(a,b,m,n_2;g) = P(a,b,m,n_2)$; and if
$g$ has weight $n_2$, then $Q(a,b,m,n_1;g) = P(a,b,m,n_1)$ while
$Q(a,b,m,n_2;g)$ is a single point in $C_b \subset \C(n_2)$. This
proves that Conjecture~\ref{combinatorialconjecture} holds for $m =
ab$. In the latter case $m = 2ab$, we have $\ell(a,b,m) = 1$ and
$$\bar{\lambda}(a,b,m) = \C(n_1) \oplus \C(n_2) \oplus \C(n_3)$$
with $n_1 = 2bc$, $n_2 = ad+bc$, and $n_3 = 2ad$. Hence,
$P(a,b,m,n_1)$, $P(a,b,m,n_2)$, and $P(a,b,m,n_3)$ are the regular
polygons with vertex sets $C_a \subset \C(n_1)$, $C_m \subset
\C(n_2)$, and $C_b \subset \C(n_3)$, respectively. The index functions
$g \colon \Z \to \Z$ have weights $n_1$, $n_2$, or $n_3$. If $g$ has
weight $n_1$, then $Q(a,b,m,n_1;g)$ is a single point in $C_a \subset
\C(n_1)$ while $Q(a,b,m,n_2;g)$ and $Q(a,b,m,n_3;g)$ are the regular
polygons with vertex sets $C_{2b} \subset \C(n_2)$ and $C_b \subset
\C(n_3)$, respectively. Moreover, the images of
$$z = x((1/2b)(\zeta_m^{g(0)} + \zeta_m^{g(1)} + \dots +
\zeta_m^{g(2b-1)})) \in Q(a,b,m;g)$$
in $Q(a,b,m,n_2;g)$ and $Q(a,b,m,n_3;g)$ are the respective
origins. It follows that $Q(a,b,m;g) \subset \bar{\lambda}(a,b,m)$
does not contain the origin, that $Q(a,b,m;g) \cap \lambda'(a,b,m)$ is
a subset of $C_a' \subset \lambda'(a,b,m)$, and that this subset is
non-empty. Likewise, if $g$ has weight $n_3$, then a 
similar argument shows that $Q(a,b,m;g) \subset \bar{\lambda}(a,b,m)$
does not contain the origin, that the intersection $Q(a,b,m;g) \cap
\lambda''(a,b,m)$ is a subset of $C_b'' \subset \lambda''(a,b,m)$, and
that this subset is non-empty. Finally, if $g$ has weight $n_2$, then
$Q(a,b,m,n_2;g)$ is contained in the convex hull of
$$\{\zeta_m^{t+u} \mid t \in \{0,1,\dots,ab\}\} \subset \C(n_2),$$
some some integer $u$, and may contain $0 \in \C(n_2)$. If it does,
then there exists an integer $u$ with the property that
$$g(t) = \begin{cases}
g(u) + a(t-u) & \text{if $0 \leqslant t-u < b$} \cr
g(u) + ab + b(t-u-b) & \text{if $b \leqslant t-u < a+b$} \cr
\end{cases}$$
and $0 \in \C(n_2)$ is the image by the canonical projection of the
unique point
$$z = x((1/2)(\zeta_m^{g(u)} + \zeta_m^{g(u)+ab})) \in Q(a,b,m;g).$$
But the images of $z$ in $Q(a,b,m,n_1;g)$ and $Q(a,b,m,n_3;g)$ are
equal to $1 \in \C(n_1)$ and $1 \in \C(n_3)$, respectively. It follows
that $Q(a,b,m;g) \subset \bar{\lambda}(a,b,m)$ does not contain the
origin and does not intersect neither $\lambda'(a,b,m)$ nor
$\lambda''(a,b,m)$. This completes the proof that
Conjecture~\ref{combinatorialconjecture} holds for $m = 2ab$.
\end{proof}

\begin{prop}\label{dimensiontwotheorem}If $1 < a < b$ are relatively
prime integers, then Conjecture~\ref{mainconjecture} is true for
all positive integers $m$ with $\ell(a,b,m) = 0$ and for all
positive integers divisible by neither $a$ nor $b$ with $\ell(a,b,m) =
1$.
\end{prop}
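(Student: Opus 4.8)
The plan is to deduce the proposition from Proposition~\ref{dimensiontwo} together with the homology computations of Section~\ref{homologysection}. First I would observe that the set $\Lambda$ of positive integers $m$ with either $\ell(a,b,m) = 0$, or $\ell(a,b,m) = 1$ and $m$ divisible by neither $a$ nor $b$, is closed under division: by Lemma~\ref{sylvester}~(5) every divisor $t$ of such an $m$ has $\ell(a,b,t) \leqslant \ell(a,b,m) \leqslant 1$, and if $m$ is divisible by neither $a$ nor $b$ then neither is $t$. Since Conjecture~\ref{combinatorialconjecture} holds for all $m$ with $\ell(a,b,m)\leqslant 1$ by Proposition~\ref{dimensiontwo}, the construction carried out earlier in this section produces a compatible family $\{u(a,b,m)\}_{m\in\Lambda}$ of maps of pointed $C_m$-spaces satisfying part~(1) of Conjecture~\ref{mainconjecture}; here I use that the definition of $u(a,b,m)$ and the verification of~(1) for a relation $m = st$ only invoke Conjecture~\ref{combinatorialconjecture} for $m$ and its divisors, all of which lie in $\Lambda$. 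It then remains to verify part~(2), that for $m\in\Lambda$ the map $\id\wedge u(a,b,m)$ of pointed $\mathbb{T}$-spaces induces an isomorphism on reduced singular homology.

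Next I would reduce~(2) to a surjectivity statement in a single pair of degrees. By Corollary~\ref{homologyagrees} the reduced homology groups of $\mathbb{T}_+\wedge_{C_m}X(a,b,m)$ and $\mathbb{T}_+\wedge_{C_m}Y(a,b,m)$ are finitely generated and abstractly isomorphic, and a surjective homomorphism between abstractly isomorphic finitely generated abelian groups is automatically an isomorphism; so it suffices to prove that $\id\wedge u(a,b,m)$ is surjective on reduced homology. By Proposition~\ref{barconstructionlemma} we identify $\mathbb{T}_+\wedge_{C_m}X(a,b,m)$ with $N^{\cy}(\langle t\rangle,\langle t^a,t^b\rangle;m)$, and Proposition~\ref{homologycalculation} shows its reduced homology is concentrated in degrees $2\ell(a,b,m)$ and $2\ell(a,b,m)+1$; since $\ell(a,b,m)\leqslant 1$ for $m\in\Lambda$, only degrees $\leqslant 3$ occur, and each nonzero group is cyclic. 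In the case $\ell(a,b,m) = 1$ both groups, in degrees $2$ and $3$, are infinite cyclic and Connes' operator carries a generator in degree $2$ to $m$ times a generator in degree $3$ (the analogous statement holds on the target side, computed below); since Connes' operator is natural and a homomorphism $\Z\to\Z$ restricting to an isomorphism on $m\Z$ is an isomorphism, it is enough to treat degree~$2$. Thus in every case one has to produce a single explicit homology class in the image.

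Finally the case analysis. If $\ell(a,b,m) = 0$ and $m$ is divisible by neither $a$ nor $b$, then $\Sigma(a,b,m) = \emptyset$ and $\lambda(a,b,m) = 0$, so $X(a,b,m) = \Delta^{m-1}_+$, $Y(a,b,m) = S^0$, and $u(a,b,m)$ is the map collapsing $\Delta^{m-1}$ to the non-basepoint; hence $\id\wedge u(a,b,m)$ is the projection $(\mathbb{T}\times_{C_m}\Delta^{m-1})_+ \to (\mathbb{T}/C_m)_+$, a homotopy equivalence because $\Delta^{m-1}$ is contractible. If $m = ab$ (the only case with $\ell(a,b,m) = 0$ divisible by both $a$ and $b$), Proposition~\ref{homologycalculation}~(4) gives $\tilde H_*(\mathbb{T}_+\wedge_{C_m}X(a,b,m)) = 0$ and there is nothing to prove. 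In the remaining cases — $\ell(a,b,m) = 0$ with exactly one of $a,b$ dividing $m$, and $\ell(a,b,m) = 1$ with $m$ divisible by neither — I would compute the relevant reduced homology of $\mathbb{T}_+\wedge_{C_m}Y(a,b,m)$ using the untwisting isomorphism of Remark~\ref{untwistingremark} and the definition of $Y(a,b,m)$ in terms of $S^{\lambda(a,b,m)}$, $\widetilde{C}_a$ and $\widetilde{C}_b$, and then trace a generator of the source through $u(a,b,m) = h\circ x$: the map $x$ is induced by the linear projection $\R[C_m]\to\bar\lambda(a,b,m)$, and $h$ is by construction a $C_m$-homotopy equivalence assembled from a radial dilation away from the origin and a deformation retraction, so after applying $\mathbb{T}_+\wedge_{C_m}(-)$ it remains only to evaluate the induced map on the generator of $\tilde H_*(N^{\cy}(\langle t\rangle,\langle t^a,t^b\rangle;m))$ that Proposition~\ref{homologycalculation} identifies, via the boundary map of the cofibration sequence $N^{\cy}(\langle x,y\rangle;m)_+\to N^{\cy}(\langle t\rangle;m)_+\to N^{\cy}(\langle t\rangle,\langle t^a,t^b\rangle;m)$, with an explicit cycle in $\bar R(A;m)$ representing a class in $\HH_*(A;m)$. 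Carrying out this last matching — and in particular checking that the geometrically defined map $u(a,b,m)$ realizes a degree, respectively a linking number, equal to $\pm 1$ rather than merely one divisible by $a$ or $b$ — is the main obstacle; everything else in the argument is formal.
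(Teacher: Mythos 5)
Your reduction steps coincide with the paper's: part~(1) of Conjecture~\ref{mainconjecture} comes from Proposition~\ref{dimensiontwo} and the construction of $u(a,b,m)$ earlier in the section, Corollary~\ref{homologyagrees} reduces part~(2) to surjectivity on reduced homology, and naturality of Connes' operator reduces the case $\ell(a,b,m)=1$ to the degree-$2$ group; the trivial cases ($\ell(a,b,m)=0$ with neither $a$ nor $b$ dividing $m$, and $m=ab$) are disposed of as in the paper. But the heart of the proposition --- producing, in the remaining cases, an explicit class in the homology of $X(a,b,m)$ whose image under $u(a,b,m)$ generates the target --- is exactly the step you defer as ``the main obstacle,'' and you give no argument for it. In the paper this is where all the work lies. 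For $\ell(a,b,m)=1$ with neither $a$ nor $b$ dividing $m$, one uses that $P(a,b,m)$ is the regular $m'$-gon in $\C(n)$, triangulates it by the cones with apex $1\in C_{m'}$ over the boundary edges not containing $1$ (the set $F(a,b,m)$), observes that the resulting chain $\mathfrak{z}=\sum_{\theta}\sgn(\theta)(h\circ x\circ\Delta^{\theta})$ generates $\tilde{H}_2(Y(a,b,m);\Z)$, and --- crucially --- checks that the corresponding chain $\mathfrak{z}'=\sum_{\theta}\sgn(\theta)(\pr\circ\Delta^{\theta})$ in $X(a,b,m)$ is a cycle because the two gap lengths $l$ and $m'-l$ are non-negative integer combinations of $a$ and $b$; this rests on the arithmetic of the pair $(r,s)$ with $ri'+sj'=1$ worked out before Proposition~\ref{dimensiontwo} and is not formal. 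For $\ell(a,b,m)=0$ with $a$ but not $b$ dividing $m$, the paper invokes \cite[Lemma~3.3.4]{hm1} to control the map $\tilde{H}_1(Y(a,b,m);\Z)\to\tilde{H}_1(\mathbb{T}_+\wedge_{C_m}Y(a,b,m);\Z)$ and exhibits the $1$-simplex $\pr\circ\Delta^{\theta}$ with $\theta(0)=0$, $\theta(1)=l$, where $l$ is a multiplicative inverse of $c$ modulo $a$, as a cycle whose image generates the cyclic group of order $a$; again a specific computation. Since these verifications are the entire content of the statement, their absence is a genuine gap.

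A secondary problem is your parenthetical claim that $h$ is ``by construction a $C_m$-homotopy equivalence assembled from a radial dilation and a deformation retraction.'' Only the inclusion $\iota$ in the divisible cases is shown to be a deformation retract; the radial-dilation map $P(a,b,m)/Q(a,b,m)\to S^{\lambda(a,b,m)}$ (or the map $h'$) is merely well defined, thanks to Conjecture~\ref{combinatorialconjecture}~(1), and is not known to be an equivalence --- indeed, whether $u(a,b,m)=h\circ x$ is a weak equivalence is precisely what is conjectural in general. So the surjectivity cannot be obtained by declaring $h$ an equivalence and ``tracing a generator formally''; it must be established on explicit cycles as the paper does.
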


\begin{proof}We recall that for all positive integers $m$ for which
Conjecture~\ref{combinatorialconjecture} is true, we have already
constructed maps of pointed $C_m$-spaces
$$\begin{xy}
(-15,0)*+{ X(a,b,m) }="a";
(15,0)*+{ Y(a,b,m) }="b";
{ \ar^-{u(a,b,m)} "b";"a";};
\end{xy}$$
that satisfy part~(1) of Conjecture~\ref{mainconjecture}, and by
Proposition~\ref{dimensiontwo},
Conjecture~\ref{combinatorialconjecture} is true for all positive
integers $m$ with $\ell(a,b,m) \leqslant 1$. Therefore, to prove the
theorem, we must show that for these $m$, the induced maps of pointed
$\mathbb{T}$-spaces
$$\begin{xy}
(-22,0)*+{ \mathbb{T}_+ \wedge_{C_m} X(a,b,m) }="a";
(22,0)*+{ \mathbb{T}_+\wedge_{C_m}Y(a,b,m) }="b";
{ \ar^-{u'(a,b,m)} "b";"a";};
\end{xy}$$
induce isomorphisms of reduced singular homology groups. We already
know from Corollary~\ref{homologyagrees} that the reduced homology
groups of the domain and target of $u'(a,b,m)$ are abstractly
isomorphic cyclic groups. Therefore, it will suffice to show that the
map $u'(a,b,m)$ induces a surjection on reduced homology groups.

We first consider the case in which neither $a$ nor $b$ divides
$m$. If $\ell(a,b,m) = 0$, then $\Sigma(a,b,m)$ is empty and
$u'(a,b,m)$ is the homotopy equivalence
$$\begin{xy}
(-18,0)*+{ \mathbb{T}_+ \wedge_{C_m} \Delta^{m-1}_+ }="1";
(18,0)*+{ \mathbb{T}_+ \wedge_{C_m} S^0 }="2";
{ \ar^-{u'(a,b,m)} "2";"1";};
\end{xy}$$
that collapses $\Delta^{m-1}$ onto $0 \in S^0$, and hence, induces an
isomorphism of reduced homology groups. So we assume that
$\ell(a,b,m) = 1$ and consider the diagram
$$\begin{xy}
(-28,7)*+{ \tilde{H}_2(\mathbb{T}_+ \wedge_{C_m}X(a,b,m);\Z) }="11";
(28,7)*+{ \tilde{H}_2(\mathbb{T}_+ \wedge_{C_m}Y(a,b,m);\Z) }="12";
(-28,-7)*+{ \tilde{H}_3(\mathbb{T}_+ \wedge_{C_m}X(a,b,m);\Z) }="21";
(28,-7)*+{ \tilde{H}_3(\mathbb{T}_+ \wedge_{C_m}Y(a,b,m);\Z) }="22";
{ \ar^-{u'(a,b,m)} "12";"11";};
{ \ar^-{d} "21";"11";};
{ \ar^-{u'(a,b,m)} "22";"21";};
{ \ar^-{d} "22";"12";};
\end{xy}$$
in which the vertical maps are given by Connes' operator. The diagram
commutes since $u'(a,b,m)$ is $\mathbb{T}$-equivariant. The four
groups in the diagram all are infinite cyclic and are the only
non-zero reduced homology groups of the pointed spaces in
question. Moreover, the vertical maps are injections onto the 
respective subgroups of index $m$. Therefore, to prove that the 
horizontal maps are isomorphisms, it will suffice to show that the top
map is surjective. We further consider the diagram
$$\begin{xy}
(-28,7)*+{ \tilde{H}_2(X(a,b,m);\Z) }="11";
(28,7)*+{ \tilde{H}_2(Y(a,b,m);\Z) }="12";
(-28,-7)*+{ \tilde{H}_2(\mathbb{T}_+ \wedge_{C_m}X(a,b,m);\Z) }="21";
(28,-7)*+{ \tilde{H}_2(\mathbb{T}_+ \wedge_{C_m}Y(a,b,m);\Z) }="22";
{ \ar^-{u(a,b,m)} "12";"11";};
{ \ar "21";"11";};
{ \ar^-{u'(a,b,m)} "22";"21";};
{ \ar "22";"12";};
\end{xy}$$
in which the vertical maps are induced by the maps that take $z$ to
the class of $(1,z)$. Since the right-hand vertical map is an
isomorphism, we are further reduced to showing that the top horizontal
map in this diagram is surjective. We use the known facet structure of
the polytope $P(a,b,m)$ to produce a generator of the target of the
map induced by $u(a,b,m)$ and an element  of the domain of this map
that maps to this generator. We have $m = ai+bj$ 
for a unique pair $(i,j)$ of positive integers and define the integers
$n$, $q$, $m'$, $n'$, $i'$, $j'$, $r$, $s$, $k$, and $l$ as in the
discussion preceeding Proposition~\ref{dimensiontwo}. With this
notation in hand, the polytope $P(a,b,m)$ is the regular polygon in
$\C(n)$ with vertex set
$$V(a,b,m) = \{ z^n \mid z \in C_m\} = C_{m'} \subset \C(n).$$
We triangulate $P(a,b,m)$ by the cones with apex $1 \in C_{m'}$ and
with bases given by the edges in $\partial P(a,b,m)$ that do not have
$1$ as a vertex. To this end, we define $F(a,b,m)$ to be the set of
strictly increasing maps $\theta \colon [2] \to [m-1]$ with the
property  that $\theta(0) = 0$, $\theta(2) < m'$, and $\theta(2) -
\theta(1) \equiv \pm l$ modulo $m'$ and let
$$\sgn(\theta) = \begin{cases}
+1 & \text{if $\theta(2) - \theta(1) \equiv +l$ modulo $m'$} \cr
-1 & \text{if $\theta(2) - \theta(1) \equiv -l$ modulo $m'$.} \cr
\end{cases}$$
Now the cones with apex $1 \in \C(n)$ and bases given by the edges in
$\partial P(a,b,m)$ that do not have $1$ as a vertex are exactly the
images of the composite maps
$$\begin{xy}
(-19,0)*+{ \Delta^2 }="1";
(0,0)*+{ \Delta^{m-1} }="2";
(23,0)*+{ P(a,b,m) }="3";
{ \ar^-{\Delta^{\theta}} "2";"1";};
{ \ar^-{x} "3";"2";};
\end{xy}$$
with $\theta \in F(a,b,m)$. Moreover, we may orient $\Delta^2$ and
$P(a,b,m)$ in such a way that the map $x \circ \Delta^{\theta}$ is
orientation-preserving if $\sgn(\theta) = +1$ and 
orientation-reversing if $\sgn(\theta) = -1$. It follows that the 
singular chain in $Y(a,b,m)$ defined by
$$\mathfrak{z} = \sum_{\theta \in F(a,b,m)} \sgn(\theta)(h \circ x \circ
\Delta^{\theta})$$
is a cycle whose homology class generates
$\tilde{H}_2(Y(a,b,m);\Z)$. The chain $\mathfrak{z}$ is the image by
$u(a,b,m)$ of the chain in $X(a,b,m)$ defined by
$$\mathfrak{z}' = \sum_{\theta \in F(a,b,m)}
\sgn(\theta)(\pr \circ \Delta^{\theta}),$$
where $\pr \colon \Delta^{m-1} \to X(a,b,m)$ is the canonical
projection, and therefore, it will suffice to show that
$\mathfrak{z}'$ is a cycle. Its boundary is given as follows. Let
$E(a,b,m)$ be the set of strictly increasing maps $\sigma \colon [1]
\to [m-1]$ with the property that $\sigma(1) < m'$ and $\sigma(1) -
\sigma(0) = \pm l$ modulo $m'$ and define $\sgn(\sigma)$ to be $+1$ if
$\sigma(1) - \sigma(0) \equiv +l$ modulo $m'$ and to be $-1$ if
$\sigma(1) - \sigma(0) \equiv -l$ modulo $m'$. Then
$$\partial(\mathfrak{z}') = \sum_{\sigma \in E(a,b,m)}
\sgn(\sigma)(\pr \circ \Delta^{\sigma}),$$
and each summand is zero, since $l$ and $m'-l$ both are of the form
$au+bv$ with $(u,v)$ a pair of non-negative integers.

We next consider the case where $a$ but not $b$ divides $m$ and
$\ell(a,b,m) = 0$. We recall that $m' = a$ and $n' = c$. In the diagram
$$\begin{xy}
(-28,7)*+{ \tilde{H}_1(X(a,b,m);\Z) }="11";
(28,7)*+{ \tilde{H}_1(Y(a,b,m);\Z) }="12";
(-28,-7)*+{ \tilde{H}_1(\mathbb{T}_+ \wedge_{C_m}X(a,b,m);\Z) }="21";
(28,-7)*+{ \tilde{H}_1(\mathbb{T}_+ \wedge_{C_m}Y(a,b,m);\Z), }="22";
{ \ar^-{u(a,b,m)} "12";"11";};
{ \ar "21";"11";};
{ \ar^-{u'(a,b,m)} "22";"21";};
{ \ar "22";"12";};
\end{xy}$$
where the vertical maps are induced by the maps that take $z$ to
the class of $(1,z)$, the groups in the bottom row both are cyclic of
order $a$, the upper right-hand group is a free abelian group of
rank $a-1$, and the right-hand vertical map is a surjection which was
evaluated in~\cite[Lemma~3.3.4]{hm1}. Let
$\theta \colon [1] \to [m-1]$ be the map defined by $\theta(0) = 0$
and $\theta(1) = l$, where $0 \leqslant l < a$ is a multiplicative
inverse of $c$ modulo $a$. Since $\Sigma(a,b,m) \subset \Delta^{m-1}$
contains the vertices, the composite map
$$\begin{xy}
(-18,0)*+{ \Delta^1 }="1";
(0,0)*+{ \Delta^{m-1} }="2";
(23,0)*+{ X(a,b,m) }="3";
{ \ar^-{\Delta^{\theta}} "2";"1";};
{ \ar^-{\pr} "3";"2";};
\end{xy}$$
is a cycle in $X(a,b,m)$. Moreover, it follows from loc.~cit.~that the
homology class of the image of this cycle by the composite map
$$\begin{xy}
(-31,0)*+{ X(a,b,m) }="1";
(0,0)*+{ Y(a,b,m) }="2";
(34,0)*+{ \mathbb{T}_+ \wedge_{C_m} Y(a,b,m) }="3";
{ \ar^-{u(a,b,m)} "2";"1";};
{ \ar^-{p} "3";"2";};
\end{xy}$$
is a generator of the lower right-hand group in the diagram
above as desired. The case where $b$ but not $a$ divides $m$ and
$\ell(a,b,m) = 0$ is proved analogously.

Finally, if $a$ and $b$ both divide $m$, then the homology groups of
the domain and target of the map $u'(a,b,m)$ are both trivial, so the
proposition trivially holds.
\end{proof}

\begin{remark}We would have liked to prove that
Conjecture~\ref{mainconjecture} holds for all positive integers $m$
with $\ell(a,b,m) \leqslant 1$, since this would imply that the long
exact sequence in Theorem~\ref{main} is valid for $q \leqslant 3$. 
However, in the remaining cases, where
$m$ is divisible by $a$ but not $b$ and $\ell(a,b,m) = 1$, it appears
necessary to understand the facet structure of the $4$-dimensional
polytope $P(a,b,m)$ in order to find a cycle whose homology class
generates $\tilde{H}_3(\mathbb{T}_+ \wedge X(a,b,m);\Z)$.
\end{remark}

\providecommand{\bysame}{\leavevmode\hbox to3em{\hrulefill}\thinspace}
\providecommand{\MR}{\relax\ifhmode\unskip\space\fi MR }
% \MRhref is called by the amsart/book/proc definition of \MR.
\providecommand{\MRhref}[2]{%
  \href{http://www.ams.org/mathscinet-getitem?mr=#1}{#2}
}
\providecommand{\href}[2]{#2}

\end{document}